\documentclass[10pt,a4paper]{article}
\usepackage{amsthm}
\usepackage{amsmath}
\usepackage{amsfonts}
\usepackage{amssymb}
\usepackage{graphicx}
\usepackage[utf8]{inputenc}
\usepackage{verbatim}
\usepackage{comment}
\usepackage{subcaption}
\usepackage{float}
\usepackage{mathtools}
\usepackage[scaled=0.90]{helvet}
\usepackage{hyperref}
\usepackage{geometry}
\usepackage{pgf,tikz,pgfplots}
\pgfplotsset{compat=1.15}
\usetikzlibrary{calc, arrows, shapes, positioning, patterns, angles, quotes, intersections, through, backgrounds}\usetikzlibrary{decorations.markings}

\makeatletter
\let\save@mathaccent\mathaccent
\newcommand*\if@single[3]{%
  \setbox0\hbox{${\mathaccent"0362{#1}}^H$}%
  \setbox2\hbox{${\mathaccent"0362{\kern0pt#1}}^H$}%
  \ifdim\ht0=\ht2 #3\else #2\fi
  }
\newcommand*\rel@kern[1]{\kern#1\dimexpr\macc@kerna}
\newcommand*\widebar[1]{\@ifnextchar^{{\wide@bar{#1}{0}}}{\wide@bar{#1}{1}}}
\newcommand*\wide@bar[2]{\if@single{#1}{\wide@bar@{#1}{#2}{1}}{\wide@bar@{#1}{#2}{2}}}
\newcommand*\wide@bar@[3]{%
  \begingroup
  \def\mathaccent##1##2{%
    \let\mathaccent\save@mathaccent
    \if#32 \let\macc@nucleus\first@char \fi
    \setbox\z@\hbox{$\macc@style{\macc@nucleus}_{}$}%
    \setbox\tw@\hbox{$\macc@style{\macc@nucleus}{}_{}$}%
    \dimen@\wd\tw@
    \advance\dimen@-\wd\z@
    \divide\dimen@ 3
    \@tempdima\wd\tw@
    \advance\@tempdima-\scriptspace
    \divide\@tempdima 10
    \advance\dimen@-\@tempdima
    \ifdim\dimen@>\z@ \dimen@0pt\fi
    \rel@kern{0.6}\kern-\dimen@
    \if#31
      \overline{\rel@kern{-0.6}\kern\dimen@\macc@nucleus\rel@kern{0.4}\kern\dimen@}%
      \advance\dimen@0.4\dimexpr\macc@kerna
      \let\final@kern#2%
      \ifdim\dimen@<\z@ \let\final@kern1\fi
      \if\final@kern1 \kern-\dimen@\fi
    \else
      \overline{\rel@kern{-0.6}\kern\dimen@#1}%
    \fi
  }%
  \macc@depth\@ne
  \let\math@bgroup\@empty \let\math@egroup\macc@set@skewchar
  \mathsurround\z@ \frozen@everymath{\mathgroup\macc@group\relax}%
  \macc@set@skewchar\relax
  \let\mathaccentV\macc@nested@a
  \if#31
    \macc@nested@a\relax111{#1}%
  \else
    \def\gobble@till@marker##1\endmarker{}%
    \futurelet\first@char\gobble@till@marker#1\endmarker
    \ifcat\noexpand\first@char A\else
      \def\first@char{}%
    \fi
    \macc@nested@a\relax111{\first@char}%
  \fi
  \endgroup
}
\makeatother

\usepackage{capt-of}





\newcommand*{\N}{\mathbb{N}}

\newcommand*{\R}{\mathbb{R}}




\newcommand*{\tX}{\widetilde{X}}
\newcommand*{\td}{\widetilde{d}}
\newcommand*{\rar}{\rightarrow}

\newcommand*{\lpls}{(X, d, \ll, \leq, \tau)}


\newcommand*{\tll}{\mathrel{\widetilde{\ll}}}
\newcommand*{\tleq}{\mathrel{\widetilde{\leq}}}
\newcommand*{\tx}{\tilde{x}}
\newcommand*{\ty}{\tilde{y}}
\newcommand*{\ttau}{\widetilde{\tau}}
\newcommand*{\btau}{\bar{\tau}}

\newcommand*{\mad}{\measuredangle}

\newcommand*{\bT}{\widebar{T}}


\newcommand*{\bx}{\bar{x}}
\newcommand*{\by}{\bar{y}}
\newcommand*{\bz}{\bar{z}}

\newcommand*{\bp}{\bar{p}}
\newcommand*{\bq}{\bar{q}}
\newcommand*{\br}{\bar{r}}
\newcommand*{\ba}{\bar{a}}
\newcommand*{\bb}{\bar{b}}


\DeclareMathOperator{\arcosh}{arcosh}

\DeclareMathOperator{\sgn}{sgn}
\DeclareMathOperator{\dS}{dS}

\swapnumbers

\newtheorem{thm}{Theorem}[subsection]
\newtheorem{prop}[thm]{Proposition}
\newtheorem{cor}[thm]{Corollary}
\newtheorem{lem}[thm]{Lemma}
\newtheorem{defin}[thm]{Definition}

\theoremstyle{definition}
\newtheorem{ex}[thm]{Example}

\newtheorem{rem}[thm]{Remark}
\numberwithin{equation}{subsection}

\theoremstyle{definition} 


\newcommand{\thistheoremnam}{}
\newtheorem*{genericthm*}{\thistheoremnam}
\newenvironment{chapt*}[1]
  {\renewcommand{\thistheoremnam}{#1}%
   \begin{genericthm*}}
  {\end{genericthm*}}

\setlength\parindent{0pt}

\title{Gluing constructions for Lorentzian length spaces}
\author{Tobias Beran\footnote{\href{mailto:tobias.beran@univie.ac.at}{tobias.beran@univie.ac.at}, Faculty of Mathematics, University of Vienna, Austria} \ and Felix Rott\footnote{\href{mailto:felix.rott@univie.ac.at}{felix.rott@univie.ac.at}, Faculty of Mathematics, University of Vienna, Austria}}
\date{}

\begin{document}

\maketitle
\begin{abstract}
We introduce an analogue to the amalgamation of metric spaces into the setting of Lorentzian pre-length spaces. This provides a very general process of constructing new spaces out of old ones. The main application in this work is an analogue of the gluing theorem of Reshetnyak for CAT($k$) spaces, which roughly states that gluing is compatible with upper curvature bounds. 
We formulate the theorem in terms of (strongly causal) spacetimes viewed as Lorentzian length spaces. \\

\emph{Keywords:} Lorentzian length spaces, gluing constructions, quotient spaces, synthetic curvature bounds, triangle comparison, metric geometry, causality theory \\

\emph{MSC2020:} 53C23 (primary), 53C50, 53B30, 51F99, 51K10 (secondary)
\end{abstract}

\tableofcontents

\section{Introduction}
The theory of Lorentzian length spaces, introduced in \cite{KS18}, is a new approach to developing a synthetic description of Lorentzian geometry without relying on any differential geometric machinery. It is very much inspired by the relationship between metric geometry and Riemannian geometry, where in particular the theory of length spaces has led to fundamental contributions and essentially has given rise to a purely metric and synthetic point of view of Riemannian manifolds. \\

Lorentzian length spaces appear to be a very promising approach in this direction and are on the way to becoming an independent field of research, increasingly attracting many established researchers from Lorentzian geometry and general relativity. 
There have been a variety of interesting results concerning the advancement of Lorentzian length spaces of which we want to mention a few.
\begin{itemize}
\item \cite{GKS19} introduces a notion of (in)extendibility for Lorentzian length spaces.
\item \cite{AGKS19}, via generalized cones, introduces an analogue to warped products into the setting of Lorentzian length spaces.
\item \cite{CM20} introduces optimal transport methods in Lorentzian length spaces, defines timelike Ricci curvature bounds via suitable entropy conditions and gives applications to general relativity (synthetic singularity theorems).
\item \cite{ACS20} further develops the causal ladder for Lorentzian length spaces.
\item \cite{KS21} examines the null distance in Lorentzian length spaces (which was first introduced in \cite{SV16} for manifolds) and in turn studies Gromov-Hausdorff convergence, establishing first compatibility results with respect to curvature bounds. 
\item \cite{BGH21} studies (the existence of) time functions on Lorentzian length spaces.
\item \cite{MS21} defines an analogue to Hausdorff measure on Lorentzian length spaces.
\end{itemize}
\subsection{Motivation and summary}
Currently, the majority of research in Lorentzian length spaces is concentrated around direct applications to general relativity and only few works result from a purely metric motivation. 
Indeed, it still seems that many fundamental concepts and constructions from metric geometry have not yet been fully incorporated or are outright missing from the Lorentzian theory. \\

The main goal of this work is to adapt some of these missing concepts from metric geometry to the Lorentzian setting and in this way contribute to making it an equally applicable and impactful synthetic analogue to the metric theory of length spaces. Leading experts in the field of metric geometry suggest that an idea similar to the amalgamation of metric spaces is essential in this process. 
Indeed, the amalgamation of metric spaces is \emph{the} fundamental construction for producing new spaces from old ones and thus showcases a significant advantage of metric spaces compared to (Riemannian) manifolds, where gluing is in general only possible along isometric/diffeomorphic boundaries, if at all. Instead, one can usually only consider Cartesian products or submanifolds, both of which offer much less flexibility. \\

One of the key results concerning gluing in the metric world is the gluing theorem of Reshetnyak: it states that the amalgamation of metric spaces which satisfy an upper curvature bound, so-called CAT($k$) spaces, also satisfies the same upper curvature bound. 
Metric gluing has also found applications in the theory of semi-dispersing billiards, cf.\ \cite{AKP19,BFK98a,BFK98b}, as well as in geometric group theory, cf.\ \cite{BH99}. \\

A gluing process for Lorentzian pre-length spaces turns out to be a more delicate matter than the corresponding process for metric spaces since, roughly speaking, there is much more compatibility one has to respect. 
In other words, a metric space only consists of a set with a distance function while a Lorentzian pre-length space is both a causal space and a metric space and moreover both have to behave well with respect to the time separation function.
Our first task is to translate the metric amalgamation into the Lorentzian setting. In the metric case, it consists of two steps: first forming the disjoint union and then considering the quotient semi-metric with respect to the identifying equivalence relation. The disjoint union can be easily adapted but a ``quotient time separation'' needs to be treated a bit more carefully. \\

We continue with the preparations necessary for a Lorentzian analogue of the Reshetnyak gluing theorem, which is the central part of this work. Most important for this goal is to establish a gluing lemma for triangles in the sense of \cite[Lemma II.4.10]{BH99}. This turns out to be a quite technically demanding task. 
Even worse, without a solid concept of spacelike distance in Lorentzian pre-length spaces, there is no chance to achieve a reasonably general version of the gluing theorem. It does, however, work out when considering manifolds as Lorentzian pre-length spaces, where spacelike distances are well known: this is the content of the last chapter. This is also where the main result of this paper is formulated, namely:
\begin{chapt*}{5.2.1 Theorem}[Reshetnyak's gluing theorem, Lorentzian version]
Let $(X_1,g_1)$ and $(X_2,g_2)$ be two smooth and strongly causal spacetimes with $\dim(X_1)=:n \geq m := \dim(X_2)$. Let $A_1$ and $A_2$ be two closed non-timelike locally isolating subsets of $X_1$ and $X_2$, respectively. 
Let $f:A_1 \rar A_2$ be a $\tau$-preserving and $\leq$-preserving locally bi-Lipschitz homeomorphism which locally preserves the signed distance. Suppose $A_1$ and $A_2$ are convex in the sense of Remark \ref{assumptions on A}(iii).
Suppose $X_1$ and $X_2$ have (sectional) curvature bounded above by $K \in \R$ in the sense of \cite{AB08}. Then the Lorentzian amalgamation $X:= X_1 \sqcup_A X_2$
is a Lorentzian pre-length space with timelike curvature bounded above by $K$.
\end{chapt*}
\subsection{Outlook}
We conclude the introduction by briefly discussing possible applications of gluing constructions in the (synthetic) Lorentzian setting.
\begin{itemize}
\item The ``causal inheritance'': Many steps of the causal ladder for spacetimes have been translated into the synthetic setting, cf.\ \cite{ACS20}. Given two Lorentzian pre-length spaces that are both situated somewhere on the causal ladder, can the same be said about their amalgamation? If not, are there additional properties that would guarantee the preservation of this property?
\item The compatibility of the amalgamation and Gromov-Hausdorff convergence of Lorentzian length spaces with respect to the null distance:  convergence of Lorentzian length spaces has been studied in \cite{KS21}. Given two sequences of Lorentzian length spaces that converge each to a Lorentzian length space, can the same be said about the sequence of the respective amalgamations?
\item An analogue to the collision theorem: concerning the theory of semi-dispersing billiards, the collision theorem is a particularly nice application of gluing in the metric world, cf.\ \cite[Theorem 2.6.1]{AKP19}. In the Lorentzian case, this could be useful for investigating particle collisions in general relativity.
\item Globalization: the metric version of the gluing lemma is used to globalize upper curvature bounds, see \cite[Proposition II.4.9 \& Lemma II.4.10]{BH99}. A Lorentzian version of such a result would certainly be very interesting and might be possible with similar methods.
\item General relativity: it is expected that gluing constructions can also be directly applied in various topics from general relativity. Examples include: extending a spacetime (or Lorentzian length space) by gluing, cosmic censorship and gluing at singularities, or matching of spacetimes and impulsive gravitational waves.
\end{itemize}

\section{Preliminaries}
By a spacetime $(M,g)$ we mean a smooth manifold $M$ with a Lorentzian metric $g$ and a time orientation. Requiring the spacetime to be $C^k$ means the metric $g$ is $C^k$.
We 
denote by $\eta$ the ordinary Minkowski metric on $\R^n$.
We write $I(x,z):=I^+(x) \cap I^-(z)=\{ y \mid x \ll y \ll z \}$ for timelike diamonds and $J(x,z)$ for causal diamonds.
By a hinge $(\alpha,\beta)$ we mean a configuration of two (timelike) geodesics $\alpha$ and $\beta$ and the included (hyperbolic) angle, usually denoted by $\omega$.
For basic information regarding Lorentzian pre-length spaces see \cite{KS18,Ber20}. For basic information regarding the amalgamation and its compatibility with curvature conditions in the metric case see \cite{BBI,BH99,Rot20}. 
We will anyways present a very short recap of the most fundamental concepts concerning Lorentzian pre-length spaces and we will briefly describe the amalgamation in the metric picture.
\subsection{A brief introduction to Lorentzian pre-length spaces}
Simply put, a Lorentzian pre-length space encodes certain fundamental properties of a Lorentzian manifold while completely ignoring others. The focus lies on the causality relations and the time separation function, while the Lorentzian metric and the general manifold structure are discarded entirely. Compare this to metric geometry, where length spaces serve as a very useful generalization of Riemannian manifolds.
\begin{defin}[Lorentzian pre-length space]
A tuple $\lpls$ is called a Lorentzian pre-length space if it satisfies the following:
\begin{itemize}
\item[(i)]$(X,\ll,\leq)$ is a causal space, i.e., $\leq$ is a reflexive and transitive relation on $X$ and $\ll$ is a transitive relation on $X$ contained in $\leq$.
\item[(ii)] $\tau: X \times X \rar [0,\infty]$ is lower semi-continuous with respect to the metric $d$.
\item[(iii)] $\tau$ respects the causal structure in the following way: $\tau$ satisfies the reverse triangle inequality for $\leq$-related points and is compatible with $\ll$ in the sense that $\tau(a,b) >0 \iff a \ll b$.
\end{itemize}
\end{defin}
Note that due to \cite[Example 2.11]{KS18} any smooth spacetime is a Lorentzian pre-length space (where the distance metric is induced by some (complete) Riemannian background metric). Moreover, any continuous causally plain metric on a spacetime yields a Lorentzian pre-length space, cf.\ \cite[Proposition 5.8]{KS18}
\begin{defin}[Causal/timelike curves]
Let $\lpls$ be a Lorentzian pre-length space. 
\begin{itemize}
\item[(i)] A locally Lipschitz curve $\gamma:[a,b] \to X$ is called-future directed causal (respectively timelike), if $\gamma(s) \leq \gamma(t)$ (respectively $\gamma(s) \ll \gamma(t)$) for all $s,t \in [a,b], s < t$.
Past-directed curves are defined analogously.
Unless explicitly stated otherwise, we assume all causal curves to be future-directed.
\item[(ii)] The $\tau$-length of a causal curve $\gamma$ is given as
\begin{equation}
L_{\tau}(\gamma):= \inf \{ \sum_{i=0}^n \tau(\gamma(t_i),\gamma(t_{i+1}) \mid 
a=t_0 < t_1 < \ldots < t_n=b, n \in \N \}.
\end{equation}
If $\gamma(a)=x,\gamma(b)=y$ and $L_{\tau}(\gamma)=\tau(x,y)$ we say that $\gamma$ is $\tau$-realizing and we call (the image of) such a curve a geodesic segment.
\end{itemize}
\end{defin}
The main difference between a Lorentzian length space and a Lorentzian pre-length space is in spirit the same as between a length space and a metric space. That is, the time separation function of a Lorentzian length space is intrinsic in the sense that it is given by the (supremum of the) lengths of connecting causal curves. There are also some additional technical assumptions on a Lorentzian length space resembling the existence of small ``convex'' neighbourhoods. As we will mainly work with Lorentzian pre-length spaces, we only refer to the definition, see \cite[Definition 3.22]{KS18}. \\

The final ingredient we will need is the description of curvature bounds. As in the metric world, triangle comparison replaces the concept of sectional curvature bounds. We denote by $M_K$ the Lorentzian model space of constant sectional curvature $K$, cf.\ \cite[Definition 4.5]{KS18}. That is, $M_K$ is either an appropriately scaled version of de Sitter- or anti de Sitter space or the Minkowski plane. 
We may denote the Lorentzian metric on $M_K$ by $\langle \cdot , \cdot \rangle$.
Unless explicitly stated otherwise, we assume all mentioned triangles to satisfy the appropriate size bounds for $M_K$, cf.\ \cite[Lemma 2.1]{AB08} or \cite[Lemma 4.6]{KS18}.
\begin{defin}[Timelike curvature bounds]
\label{tl curv bounds}
A Lorentzian pre-length space $\lpls$ has timelike curvature bounded below (respectively above) by $K \in \R$ if every point in $X$ has a neighbourhood $U$, called a comparison neighbourhood, which satisfies the following:
\begin{itemize}
\item[(i)]$\tau|_{U \times U}$ is finite and continuous.
\item[(ii)] For all $x,y \in U$ with $x \ll y$ there exists a $\tau$-realizing curve entirely contained in $U$.
\item[(iii)] Let $\Delta(x,y,z)$ be a timelike triangle in $U$, i.e., $x \ll y \ll z$ and we have $\tau$-realizing curves connecting these points pairwise. Let $\Delta(\bx,\by,\bz)$ be its comparison triangle in $M_K$. Then for all $p,q \in \Delta(x,y,z)$ and corresponding comparison points $\bp, \bq$ in the comparison triangle we have 
$\tau(p,q) \leq \btau(\bp,\bq)$ (respectively $\tau(p,q) \geq \btau(\bp,\bq))$.
\end{itemize}
\end{defin}
\subsection{A brief introduction to metric amalgamation and the gluing theorem}
Here, we collect all metric prerequisites needed for a Lorentzian gluing construction, following \cite{BH99,BBI}.
\begin{defin}[Disjoint union metric]
Let $(X_i, d_i)_{i \in I}$ be a family of metric spaces. Let $X:= \sqcup_{i \in I} X_i$ be the disjoint union. Then
\begin{equation}
\label{disjoint union metric}
d(x,y):=
\begin{cases} d_i(x,y) & x,y \in X_i \\
\infty & \, \text{else}.
\end{cases}
\end{equation}
defines a metric on $X$, called the disjoint union metric.
\end{defin}
\begin{defin}[Quotient semi-metric]
Let $(X,d)$ be a metric space and let $\sim$ be an equivalence relation on $X$. The quotient semi-metric with respect to $\sim$ is defined as
\begin{equation}
\td([x],[y]):=\inf \{ \sum_{i=1}^n d(x_i,y_i) \mid x \sim x_1, x_{i+1} \sim y_i, y_n \sim y, n \in N \}.
\end{equation}
\end{defin}
\begin{defin}[Amalgamation]
Let $(X_i,d_i)_{i \in I}$ be a family of metric spaces. Let $(A_i)_{i \in I}$ be a family of closed subspaces 
each of which is isometric to some metric space $A$ via the isometry
$f_i : A \to A_i$. Equip the disjoint union $X:= \sqcup_{i \in I} X_i$ with the metric $d$ from above. On $X$ let $\sim$ be the equivalence relation generated by the condition $f_i(a) \sim f_j(a)$ for all $i,j \in I, a \in A$.
Then the quotient of $X$ equipped with the quotient semi-metric with respect to $\sim$ is called the amalgamation of the family $(X_i)_{i \in I}$ with respect to $A$ and is denoted by $\sqcup_A X_i$, i.e., $\sqcup_A X_i = (X/\sim, \td)$.
\end{defin}
Finally, we mention the gluing theorem of Reshetnyak. In the following formulation, it is in fact possible to omit the assumption of each $X_i$ being proper, but the proof then gets significantly more difficult. For a proof, see e.g.\ \cite[Theorem II.11.1 \& Theorem II.11.3]{BH99}.
\begin{thm}[Reshetnyak]
Let $(X_i,d_i)_{i \in I}$ be a family of proper CAT($k)$ spaces. Let $(A_i)_{i \in I}$ be a family of closed convex and complete subspaces each of which is isometric to some metric space $A$. Then the amalgamation $\sqcup_A X_i$ is a CAT($k$) space.
\qed
\end{thm}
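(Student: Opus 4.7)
The plan is to verify the CAT($k$) triangle comparison inequality for every geodesic triangle in the amalgamation $X := \sqcup_A X_i$ of admissible perimeter. A first step is to confirm that $X$ is itself a geodesic metric space within the $M_K$ size bounds: since each $X_i$ is proper CAT($k$) and $A$ is closed, convex, and complete inside every $X_i$, two points $x \in X_i$, $y \in X_j$ can be joined by a minimizer of $\inf_{a \in A}(d_i(x,a) + d_j(a,y))$, attained by properness, and this minimizer decomposes as a concatenation of geodesics in $X_i$ and $X_j$ meeting at a single point of $A$. For points within a single $X_i$, either the intrinsic geodesic in $X_i$ remains shortest in $X$, or a detour through $A$ would have to be shorter, which is ruled out by convexity of $A$.

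Next, I would fix a geodesic triangle $\Delta(p,q,r)$ in $X$. If $\Delta$ lies entirely in one $X_i$, the CAT($k$) inequality is immediate. Otherwise, convexity of $A$ in each $X_i$ forces each side of $\Delta$ to intersect $A$ either in a single point or in an entire subsegment of $A$, so that $\Delta$ decomposes into finitely many closed sub-triangles, each contained in a single $X_i$. The essential case is $\Delta = \Delta_1 \cup \Delta_2$ with $\Delta_i \subset X_i$ sharing a common side lying in $A$; all other configurations reduce to this one by iteration on the number of crossings.

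In this essential case the main tool is the triangle gluing lemma \cite[Lemma II.4.10]{BH99}: build comparison triangles $\bar\Delta_1, \bar\Delta_2 \subset M_K$ for $\Delta_1, \Delta_2$, glue them along the isometric copies of the shared edge with the free vertices on opposite sides, and invoke Alexandrov's lemma. The latter says that if the comparison angles at each endpoint of the shared edge sum to at most $\pi$ across the two sub-triangles, then the diagonals of the resulting glued quadrilateral dominate the corresponding distances in $\Delta_1 \cup \Delta_2$, yielding a genuine comparison triangle for $\Delta$ that satisfies the CAT($k$) inequality.

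The principal obstacle is verifying the angle hypothesis of Alexandrov's lemma. For this I would exploit that in each CAT($k$) space $X_i$ the comparison angle at any vertex is bounded above by the true Alexandrov angle, and that at a point of $A$ lying on a geodesic side of $\Delta$ the incoming and outgoing directions in $X$ are collinear (since that side is itself a geodesic in $X$), forcing the relevant angle sum to be at most $\pi$. The remaining care is in handling endpoints of the shared edge where $\Delta$ has only a single incoming side rather than two meeting there, which is resolved by a short limiting argument based again on convexity of $A$. Once the base case is settled, induction on the number of times the sides of $\Delta$ meet $A$ completes the proof that $X$ is CAT($k$).
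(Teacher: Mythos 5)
Your overall architecture is the standard one: the paper does not prove this theorem itself but defers to \cite[Theorems II.11.1 and II.11.3]{BH99}, and your outline (geodesicity of the amalgam via the formula $\inf_{a\in A}\bigl(d_i(x,a)+d_j(a,y)\bigr)$, subdivision of a triangle into sub-triangles each lying in a single $X_i$, repeated application of the gluing lemma for triangles, which in turn rests on Alexandrov's lemma) is exactly that proof. One small imprecision: for $x,y\in X_i$ the detour $d_i(x,a)+d_j(a,b)+d_i(b,y)$ is ruled out because $A_i$ and $A_j$ carry the same restricted metric (so $d_j(a,b)=d_i(a,b)$) together with the triangle inequality in $X_i$; convexity of $A$ is what you actually need later, to guarantee that the segment between two crossing points of the sides lies in $A$ and is therefore simultaneously a geodesic of both pieces, so that the sub-triangles are genuinely contained in single factors.

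The genuine problem is in your verification of the hypothesis of Alexandrov's lemma: the inequalities are reversed. In a CAT($k$) space the Alexandrov angle is bounded \emph{above} by the comparison angle, not below as you wrote. Correspondingly, the hypothesis of Alexandrov's lemma (and of \cite[Lemma II.4.10]{BH99}) is that the two comparison angles at the subdivision point $s$ --- the point where the cevian meets the side being subdivided, not ``each endpoint of the shared edge'' --- sum to \emph{at least} $\pi$; this is what makes the glued comparison quadrilateral straighten outward so that the distance from the opposite vertex to $s$ can only increase. The correct chain is: since $[q,s]\cup[s,r]$ is a geodesic of $X$, the upper angle at $s$ between its two halves equals $\pi$, so the triangle inequality for upper angles gives $\angle_s(q,p)+\angle_s(p,r)\ge\pi$; each of these angles is $\le$ its comparison angle because the corresponding sub-triangle lies in a CAT($k$) factor; hence the comparison angles at $\bar s$ sum to $\ge\pi$, which is Alexandrov's hypothesis. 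As written, your argument (``comparison angle bounded above by the true angle'', ``angle sum at most $\pi$'') establishes neither this hypothesis nor anything that would substitute for it, even though the intended structure of the proof is the right one.
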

\section{Lorentzian structure on a quotient}
In this section, we introduce the amalgamation construction for Lorentzian pre-length spaces. To this end we first have to discuss more elementary aspects of gluing.
\subsection{Basic gluing preparations}
We begin introducing a Lorentzian structure on the quotient of a Lorentzian pre-length space by adapting the definition of the quotient semi-metric with additional causality assumptions.
\begin{rem}[On notation and conventions I]
As any distance metric and time separation function takes values only in $[0,\infty]$, we set $\sup \emptyset = 0$ and $\inf \emptyset = \infty$ for the sake of convenience.
We will sometimes apply shortcuts commonly used in the theory of metric spaces and just write $X$ for a Lorentzian pre-length space $\lpls$. 
By a subspace $A \subseteq X$ we mean a subset viewed as a Lorentzian pre-length space equipped with the restriction of the original metric, causality relations and time separation. Moreover, we will write $[x,y]$ for a geodesic segment between $x$ and $y$. Either the context or a more detailed description will prevent any ambiguity.
We will usually write $\tX:=X/\sim$ for the (topological) quotient of $X$ with respect to an equivalence relation $\sim$. We will denote the natural projection $x \mapsto [x]$ by $\pi: X \to \tX$. 
\end{rem} 
\begin{defin}[Quotient time separation]
Let $\lpls$ be a Lorentzian pre-length space and let $\sim$ be an equivalence relation on $X$. The quotient time-separation function is defined as $\ttau: \tX \times \tX \rar [0,\infty]$, 
\begin{equation}
\ttau([x],[y]):=\sup \{ \sum_{i=1}^n \tau(x_i,y_i) \mid x \sim x_1 \leq y_1 \sim x_2 \leq y_2 \sim \ldots \sim x_n \leq y_n \sim y, n \in \N \}.
\end{equation}
We call a sequence $(x_1,y_1,\ldots, x_n,y_n)$ as above an $n$-chain from $[x]$ to $[y]$. 
\end{defin}
\begin{rem}[Restricting the set of chains]
\label{better chains}
Note that we can always assume without loss of generality that $x_1=x$ and $y_n=y$. Indeed, suppose $(x_1,y_1,\ldots, x_n,y_n)$ is an $n$-chain from $[x]$ to $[y]$, then $(x,x,x_1,y_1,\ldots, x_n,y_n,y,y)$ is an $(n+2)$-chain with at least the same length. \\

Furthermore, we can always assume that $y_i \neq x_{i+1}$. Otherwise by the reverse triangle inequality for $\tau$ we could replace 
\begin{equation}
\tau(x_i,y_i)+\tau(x_{i+1},y_{i+1})=\tau(x_i,y_i)+\tau(y_i,y_{i+1})\leq \tau(x_i,y_{i+1})
\end{equation}
to obtain a longer chain. 
We say that the relation between $y_i$ and $x_{i+1}$ is nontrivial if $y_i \neq x_{i+1}$ and $y_i \sim x_{i+1}$ .
\end{rem}
We define both causality relations on $\tX$ via the quotient time separation.
\begin{defin}[Quotient causality]
Let $\lpls$ be a Lorentzian pre-length space and let $\sim$ be an equivalence relation on $X$. 
On $\tX$, we define $[x] \tll [y] :\iff \ttau([x],[y]) >0$ and 
$[x] \tleq [y] : \iff \{ \sum_{i=1}^n \tau(x_i,y_i) \mid x \sim x_1 \leq y_1 \sim x_2 \leq y_2 \sim \ldots \sim x_n \leq y_n \sim y, n \in \N \} \neq \emptyset$. 
Especially the causal relation might be better described in words: we have $[x] \tleq [y]$ if and only if there exists a chain from $[x]$ to $[y]$. Regarding the timelike relation, we have $[x] \tll [y]$ if and only if there exists a chain of positive length from $[x]$ to $[y]$.
\end{defin}
A skeptical reader will rightfully claim at this point that without additional assumptions (on e.g.\ $\sim$) this construction is badly behaved or does not yield a Lorentzian pre-length space at all. This is not very surprising, and in some sense, parallels the metric world, where the quotient semi-metric might not be positive definite.
There are, however, some properties of a Lorentzian pre-length space that any quotient satisfies.
\begin{prop}[Quotient causal space]
Let $\lpls$ be a Lorentzian pre-length space and let $\sim$ be an equivalence relation on $X$. Then $(\tX, \tll, \tleq)$ is a causal space. 
\end{prop}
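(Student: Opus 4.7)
The plan is to verify the three axioms of a causal space directly from the definition of chains: reflexivity and transitivity of $\tleq$, transitivity of $\tll$, and the inclusion $\tll \subseteq \tleq$. All four follow by essentially formal manipulations of chains, so there is no real obstacle; the construction has been rigged so that this works.

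For reflexivity of $\tleq$, given $[x] \in \tX$, note that since $\leq$ is reflexive on $X$ we have $x \leq x$, so $(x,x)$ is a $1$-chain from $[x]$ to $[x]$. Hence the set defining $\tleq$ is nonempty, giving $[x] \tleq [x]$.

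For transitivity of $\tleq$ and of $\tll$, suppose $[x] \tleq [y]$ and $[y] \tleq [z]$, witnessed by an $n$-chain $(x_1,y_1,\dots,x_n,y_n)$ from $[x]$ to $[y]$ and an $m$-chain $(x'_1,y'_1,\dots,x'_m,y'_m)$ from $[y]$ to $[z]$. Using Remark~\ref{better chains} I may assume $y_n = y = x'_1$, or more generally that $y_n \sim y \sim x'_1$; since $\sim$ is an equivalence relation, $y_n \sim x'_1$ by transitivity. Then the concatenation $(x_1,y_1,\dots,x_n,y_n,x'_1,y'_1,\dots,x'_m,y'_m)$ is an $(n+m)$-chain from $[x]$ to $[z]$, so $[x] \tleq [z]$. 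For the timelike case, $\ttau([x],[y])>0$ together with $\ttau([y],[z])\geq 0$ gives
\[
\ttau([x],[z]) \geq \sum_{i=1}^n \tau(x_i,y_i) + \sum_{j=1}^m \tau(x'_j,y'_j) > 0,
\]
so $[x] \tll [z]$.

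Finally, the inclusion $\tll \subseteq \tleq$ is immediate from the definitions: if $\ttau([x],[y])>0$ then in particular some chain from $[x]$ to $[y]$ exists (otherwise the supremum would be $\sup\emptyset = 0$ by the convention fixed in the preceding remark), so $[x]\tleq [y]$. The only subtle point is this convention and the need to invoke Remark~\ref{better chains} to align endpoints during concatenation; both are already in place, so I expect no real obstacle.
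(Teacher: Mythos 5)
Your proof is correct and follows essentially the same route as the paper's: reflexivity from the $1$-chain $(x,x)$, transitivity of both relations by concatenating chains, and the inclusion $\tll\subseteq\tleq$ read off directly from the definitions. The small extra remarks you add (the $\sup\emptyset=0$ convention and the transitivity of $\sim$ ensuring $y_n\sim x_1'$) just make explicit what the paper leaves implicit; there is no difference in substance.
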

\begin{proof}
The inclusion of $\tll$ in $\tleq$ is clear from the definition. Suppose $[x] \tll [y] \tll [z]$. The concatenation of chains with positive length from $[x]$ to $[y]$ and from $[y]$ to $[z]$, respectively, results in a chain with positive length from $[x]$ to $[z]$. Hence $\ttau([x],[z])>0$ and so $[x] \tll [z]$. 
By the same argument we have that if there exists a chain from $[x]$ to $[y]$ and a chain from $[y]$ to $[z]$ then there exists a chain from $[x]$ to $[z]$. Thus, $[x] \tleq [y] \tleq [z]$ implies $[x] \tleq [z]$. 
The reflexivity of $\tleq$ follows from the reflexivity of $\leq$: $(x,x)$ is a valid 1-chain for any $[x] \in \tX$ and so $[x] \tleq [x]$.
\end{proof}
\begin{prop}[Reverse triangle inequality]
Let $\lpls$ be a Lorentzian pre-length space and let $\sim$ be an equivalence relation on $X$. Then $\ttau$ satisfies the reverse triangle inequality for causally related points, i.e., if $[x] \tleq [y] \tleq [z]$, then $\ttau([x],[z]) \geq \ttau([x],[y]) + \ttau([y],[z])$.
\end{prop}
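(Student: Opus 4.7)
The plan is to prove the inequality by a direct concatenation-of-chains argument, mirroring the proof sketch already used in the preceding proposition for transitivity of $\tleq$, but now tracking the actual lengths.

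First, I would reduce to an $\varepsilon$-approximation statement. Fix $\varepsilon > 0$. If both $\ttau([x],[y])$ and $\ttau([y],[z])$ are finite, choose an $n$-chain $(x_1,y_1,\ldots,x_n,y_n)$ from $[x]$ to $[y]$ with length at least $\ttau([x],[y]) - \varepsilon/2$, and an $m$-chain $(x_1',y_1',\ldots,x_m',y_m')$ from $[y]$ to $[z]$ with length at least $\ttau([y],[z]) - \varepsilon/2$. By Remark \ref{better chains} I may further assume $y_n = y = x_1'$ (up to padding with trivial links), so that $y_n \sim x_1'$.

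Next, I would concatenate: the sequence $(x_1,y_1,\ldots,x_n,y_n,x_1',y_1',\ldots,x_m',y_m')$ is then a valid $(n+m)$-chain from $[x]$ to $[z]$, because $y_n \sim x_1'$ joins the two sub-chains into one, and all other $\leq$- and $\sim$-steps are inherited unchanged. Its $\tau$-length is exactly the sum of the two original lengths, so
\begin{equation*}
\ttau([x],[z]) \geq \sum_{i=1}^n \tau(x_i,y_i) + \sum_{j=1}^m \tau(x_j',y_j') \geq \ttau([x],[y]) + \ttau([y],[z]) - \varepsilon.
\end{equation*}
Letting $\varepsilon \to 0$ yields the desired inequality.

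For the case where one of $\ttau([x],[y])$, $\ttau([y],[z])$ is $+\infty$, essentially the same concatenation shows $\ttau([x],[z]) = +\infty$ as well: one picks chains of arbitrarily large length on the infinite side, and any chain at all (which exists by the assumption $[x]\tleq[y]\tleq[z]$) on the other side, then concatenates as above. There is no genuine obstacle here; the only point that needs a moment of care is ensuring that after padding via Remark \ref{better chains} the two chains can be glued at a common representative of $[y]$, so that the concatenation is again a legal chain in the sense of the definition of $\ttau$.
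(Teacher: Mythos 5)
Your proof is correct and follows essentially the same concatenation-of-chains argument as the paper, down to the $\varepsilon/2$-approximation of each summand. The only additions (explicit padding via Remark~\ref{better chains} and the separate treatment of the infinite case) are harmless but unnecessary: since both $y_n$ and $x_1'$ lie in $[y]$, transitivity of $\sim$ already makes the concatenation a valid chain, and the infinite case is subsumed by the same estimate.
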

\begin{proof}
This follows immediately from the definition: for any chain from $[x]$ to $[y]$ and any chain from $[y]$ to $[z]$, their concatenation results in a chain from $[x]$ to $[z]$. Since there might be chains from $[x]$ to $[z]$ without going through $[y]$, $\ttau([x],[z])$ can only get larger. More precisely, given $\varepsilon >0$, let $(x_1,y_1,\ldots,x_n,y_n)$ and $(y_1',z_1,\ldots, y_m',z_m)$ be chains from $[x]$ to $[y]$ and from $[y]$ to $[z]$ with lengths at least $\ttau([x],[y]) - \frac{\varepsilon}{2}$ and $\ttau([y],[z]) - \frac{\varepsilon}{2}$, respectively. Then $(x_1,y_1,\ldots,x_n,y_n,y_1',z_1,\ldots, y_m',z_m)$ is a chain from $[x]$ to $[z]$ with length at least $\ttau([x],[y])+\ttau(y,z) - \varepsilon$ and the claim follows.
\end{proof}
In summary, we obtain the following intuitive properties on any Lorentzian quotient. This can be thought of as the analogue to ``gluing can only shrink distances'' in the metric case.
\begin{cor}[Immediate intuitive properties]
\label{easy properties}
Let $\lpls$ be a Lorentzian pre-length space and $\sim$ an equivalence relation on $X$. The quotient Lorentzian structure always satisfies the following properties for all $[x],[y] \in X$. 
\begin{itemize}
\item[(i)] $\ttau([x],[y]) \geq \tau(x,y)$.
\item[(ii)] $x \ll y \Rightarrow [x] \tll [y]$ and $x \leq y \Rightarrow [x] \tleq [y]$.
\end{itemize}
\qed
\end{cor}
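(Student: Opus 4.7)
The plan is to observe that whenever $x \leq y$, the pair $(x,y)$ itself already constitutes a valid $1$-chain from $[x]$ to $[y]$, since $x \sim x \leq y \sim y$ by reflexivity of $\sim$ and of $\leq$. Both statements then reduce to reading off what this single trivial chain contributes to the definitions of $\ttau$, $\tll$, and $\tleq$, so the whole argument is essentially bookkeeping around two case distinctions.

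For part (i), I would split into two cases. If $x \leq y$, then the chain $(x,y)$ contributes $\tau(x,y)$ to the set of admissible chain-lengths for $[x]$ and $[y]$, and since $\ttau([x],[y])$ is defined as the supremum over all such chains, we immediately obtain $\ttau([x],[y]) \geq \tau(x,y)$. If instead $x \not\leq y$, then because $\ll$ is contained in $\leq$ and $\tau$ is compatible with $\ll$ in the sense that $\tau(a,b) > 0 \iff a \ll b$, we must have $\tau(x,y) = 0$; the inequality is then automatic from $\ttau \geq 0$.

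For part (ii), the same trivial $1$-chain is all that is needed. If $x \leq y$, then $(x,y)$ witnesses that the defining set for $\tleq$ is non-empty, so $[x] \tleq [y]$. If moreover $x \ll y$, then this chain additionally has positive length $\tau(x,y) > 0$, forcing $\ttau([x],[y]) > 0$ and hence $[x] \tll [y]$.

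I do not expect any genuine obstacle here: the corollary is indeed ``immediate'' once one spots the trivial $1$-chain, consistent with its name and with the informal slogan that gluing can only enlarge time separation. The only minor subtlety worth keeping in mind is the degenerate case in which no admissible chain from $[x]$ to $[y]$ exists at all, which is harmlessly absorbed by the convention $\sup \emptyset = 0$ adopted in the preceding notation remark.
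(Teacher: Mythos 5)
Your proof is correct and uses exactly the argument the paper intends when it marks this corollary with $\qed$ and no written proof: the trivial $1$-chain $(x,y)$ (valid since $x\sim x\leq y\sim y$) immediately yields both parts, with the case $x\not\leq y$ in (i) handled by the causality compatibility of $\tau$ and the convention $\sup\emptyset=0$. Nothing is missing.
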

Finally, we observe that the only property that might prevent the quotient of a Lorentzian pre-length space from being a Lorentzian pre-length space itself is that $\ttau$ need not be lower semi-continuous.
The following examples show how the lower semi-continuity of $\ttau$ may fail and how this can be prevented.
\begin{ex}[Showcasing gluing in the Minkowski plane]
\label{gluing example1}
Consider the ordinary Minkowski plane $\R^2_1$.
Identify two spacelike related points $x$ and $y$ as in Figure \ref{lsc tau}. 
Then the resulting space is not a Lorentzian pre-length space since $\ttau$ is not lower semi-continuous. To see this, let $p \in \partial J^-(x) \setminus J^-(y)$ and $q \in I^+ (y) \setminus J^+(x)$. Then $\ttau(p,q)>0$ (red line). But if we choose a sequence $(p_n)_{n \in \N}$ such that $p_n \to p$ and $p_n \notin J^-(x) \cup I^-(q)$ for all $n \in \N$, then $\ttau(p_n,q)=0$.
\begin{figure}
\begin{center}
\begin{tikzpicture}

\begin{scriptsize}
\coordinate [circle, fill=black, inner sep=0.7pt, label=90: {$x$}] (A1) at (1,0);
\coordinate [circle, fill=black, inner sep=0.7pt, label=0: {$y$}] (B1) at (2,0);

\coordinate [circle, fill=black, inner sep=1pt, label=50: {$q$}] (q) at (2,0.5);
\coordinate [circle, fill=black, inner sep=1pt, label=300: {$p$}] (p) at (0.5,-0.5);

\coordinate [label=200: {$p_n$}] (pn) at (0.3,-0.3);

\coordinate (P1) at (0,-1);
\coordinate (P2) at (2,-1);

\end{scriptsize}


\draw [dashed] (P1) -- (A1) -- (P2);

\draw [dotted] (pn) -- (p);


\draw [red] (p)--(A1);
\draw [red] (B1)--(q);
\end{tikzpicture}
\end{center}
\caption{$\ttau$ is not lower semi-continuous.}
\label{lsc tau}
\end{figure}
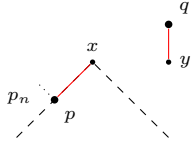
Identifying a closed vertical strip, say $(1,t) \sim (2,t)$ for all $t \in [0,1]$ has exactly the same problem. Doing the identification along an open strip, say $(1,t) \sim (2,t)$ for all $t \in (0,1)$ at first glance seems to eliminate this obstacle. In this case, however, the quotient semi-metric will not be positive definite since for example we would have $\td((1,0),(2,0))=0$. Intuitively, we need the identified sets to be (topologically) closed and at the same time always have timelike related points nearby.
Identifying $(1,t) \sim (2,t)$ for all $t \in \R$ covers both conditions. This actually turns out to be a Lorentzian pre-length space. The lower semi-continuity immediately follows from the more general proof of below.
\end{ex}

\subsection{Amalgamation prerequisites}
In the spirit of metric amalgamation, we first introduce a very easy construction of formally viewing two Lorentzian pre-length spaces as one.
\begin{defin}[Lorentzian disjoint union]
Let $(X_1,d_1,\ll_1,\leq_1,\tau_1)$ and $(X_2,d_2,\ll_2,\leq_2,\tau_2)$ be two Lorentzian pre-length spaces and set $X:= X_1 \sqcup X_2$. Define ${\leq} := {\leq_1} \sqcup {\leq_2}$, i.e., ``${\leq} \subseteq X \times X$'' and $x \leq y :\iff \exists i \in \{1,2\}: x,y \in X_i \wedge x \leq_i y$. Similarly, define ${\ll} := {\ll_1} \sqcup {\ll_2}$. Let $d$ be the disjoint union metric on $X$, cf.\ (\ref{disjoint union metric}).
Define $\tau:X \times X \rar [0,\infty]$ by 
\begin{equation}
\tau(x,y):=
\begin{cases} \tau_i(x,y) & x,y \in X_i \\
0 & \, \text{else}.
\end{cases}
\end{equation}
We call $\lpls$ the Lorentzian disjoint union of $X_1$ and $X_2$.
\end{defin}
\begin{prop}[Disjoint union Lorentzian pre-length space]
Let $(X_1,d_1,\leq_1,\ll_1,\tau_1)$ and $(X_2,d_2,\ll_2,\leq_2,\tau_2)$ be two Lorentzian pre-length spaces. Then the Lorentzian disjoint union $(X,d,\ll, \leq,\tau)$ is a Lorentzian pre-length space.
\end{prop}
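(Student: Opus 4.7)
The plan is to check the three defining properties of a Lorentzian pre-length space (causal space, lower semi-continuity of $\tau$, compatibility with the causal structure) one by one, exploiting the fact that each $X_i$ is both open and closed in $X$ under the disjoint union metric $d$ (points in different components are at distance $\infty$).

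First I would verify that $(X,\ll,\leq)$ is a causal space. The inclusion $\ll\,\subseteq\,\leq$ follows immediately from the corresponding inclusions in each $X_i$ and the fact that both relations are defined componentwise. Reflexivity of $\leq$ is pointwise: every $x$ lies in some $X_i$ and $x\leq_i x$. For transitivity of $\leq$ (and analogously of $\ll$), if $x\leq y\leq z$ then by the definition of the disjoint union relation there exist indices $i,j$ with $x,y\in X_i$ and $y,z\in X_j$; since the $X_i$ are disjoint and $y$ belongs to both, $i=j$, and transitivity then reduces to transitivity of $\leq_i$.

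Next I would establish lower semi-continuity of $\tau$. The key observation is that $X_1$ and $X_2$ are clopen in $(X,d)$, hence $X_i\times X_i$ is open in $X\times X$. Consider $(x_n,y_n)\to(x,y)$. If $x$ and $y$ lie in different components, then $\tau(x,y)=0$ and $\liminf\tau(x_n,y_n)\geq 0$ holds trivially. If instead $x,y\in X_i$, then eventually $(x_n,y_n)\in X_i\times X_i$, so $\tau(x_n,y_n)=\tau_i(x_n,y_n)$, and lower semi-continuity of $\tau_i$ with respect to $d_i$ (which agrees with $d$ on $X_i\times X_i$) yields $\liminf\tau(x_n,y_n)\geq\tau_i(x,y)=\tau(x,y)$.

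Finally, for property (iii), the reverse triangle inequality for $\leq$-related triples $x\leq y\leq z$ again forces all three points into a common $X_i$ (by the transitivity argument above), so the inequality descends directly from $\tau_i$. The equivalence $\tau(a,b)>0\iff a\ll b$ splits into two cases: if $a,b\in X_i$ both sides reduce to the corresponding statement for $\tau_i$ and $\ll_i$; if $a,b$ lie in different components, then $\tau(a,b)=0$ and $a\not\ll b$ by definition, so both sides are false.

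I do not expect any genuine obstacles here: the whole point of using the disjoint union metric (assigning distance $\infty$ between components) is to make $X_1$ and $X_2$ topologically insulated from each other, which trivializes every potential interaction between the components. The only mild subtlety worth flagging is the use of clopenness to reduce lower semi-continuity to the component-wise statement; once this is noted, the argument is routine.
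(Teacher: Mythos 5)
Your proposal is correct and follows essentially the same route as the paper's proof: verify the three defining properties componentwise, using the fact that causal relations only hold within a single component and that convergent sequences eventually stay in one component. The paper states this more tersely, but the underlying argument is identical; your explicit clopenness observation is just a clean way of phrasing what the paper invokes implicitly.
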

\begin{proof}
Clearly, $(X,\ll,\leq)$ is a causal space. The reverse triangle inequality is directly inherited from the respective inequalities in $X_1$ and $X_2$, since causal relation can only occur between points coming from the same spaces. Similarly, the lower semi-continuity of $\tau$ is inherited in this way, since for $x_n \rar x$, say $x \in X_1$, for large enough $n_0$ we have $x_n \in X_1$ for all $n \geq n_0$. Finally, the compatibility of the causal relations with $\tau$ also follows directly from their counterparts in $X_1$ and $X_2$.
\end{proof}
Analogous to the metric case, we define the Lorentzian amalgamation as a quotient of the Lorentzian disjoint union where we identify certain subsets. To ensure that this construction actually results in a Lorentzian pre-length space (i.e., that $\ttau$ is lower semi-continuous) we require the following property of the identified subsets. 
\begin{defin}[Local timelike isolation]
\label{tlisolation}
A subset $A$ of a Lorentzian pre-length space $(X,d, \linebreak \ll, \leq,\tau)$ is said to be non-future locally isolating if for all $a \in A$ with $I^+(a) \neq \emptyset$ and for all neighbourhoods $U_a \subseteq A$ of $a$ there exists $b_+ \in U_a$ such that $a \ll b_+$. Similarly, we define a non-past locally isolating set. We say $A$ is non-timelike locally isolating if it satisfies both properties.
\end{defin}
\begin{rem}[Examples and comments on local timelike isolation]
\label{tl isolation comment}
Clearly, the open image of a timelike curve is non-timelike locally isolating.
Furthermore, for any Lorentzian length space $X$, the set $X$ is non-timelike locally isolating by the sequence lemma, cf.\ \cite[Lemma 2.18]{ACS20}. But note that of course a subset of a Lorentzian length space is not a Lorentzian length space in general (with the restricted structure).
We make the additional assumption of $I^{\pm}(a) \neq \emptyset$ to also allow gluing of spaces with ``future/past boundary''. For example, consider a closed rectangle in the Minkowski plane as a Lorentzian pre-length space and identify two vertical line segments (which are not causally related at all). Then the boundary points of these segments would fail to have the non-isolating property introduced above, but only because there is nothing in the future (respectively in the past) of these points to begin with. In this case also the counterexample of Example \ref{gluing example1} fails: the quotient time separation of a sequence approaching the past of the future boundary point of one segment cannot have a positive value in the limit since on the other segment this is also a future boundary point with empty future.
\end{rem}
In the metric case, amalgamation usually occurs along maps which at least locally ``preserve structure'', i.e., local isometries.
This turns out to be not necessary to define our gluing process. So fore the sake of generality, we will formulate the Lorentzian amalgamation with as few assumptions as possible.
Not surprisingly, this may be badly behaved, which is why we will almost exclusively work with additional assumptions. Next, we introduce the (for our purposes) correct notion of structure preserving maps in Lorentzian pre-length spaces. 
The following definition is closely related to corresponding notions in \cite{ACS20} and \cite{GKS19}.
\begin{defin}[Structure preserving maps]
Let $X_1$ and $X_2$ be two Lorentzian pre-length spaces.
\begin{itemize}
\item[(i)] A map $f: X_1 \rar X_2$ is called $\tau$-preserving if $\tau_1(x,y)=\tau_2(f(x),f(y))$ for all $x,y \in X_1$.
\item[(ii)] A map $f: X_1 \rar X_2$ is called $\ll$-preserving if $x \ll_1 y \iff f(x) \ll_2 f(y)$ for all $x,y \in X_1$. 
It is called $\leq$-preserving if $x \leq_1 y \iff f(x) \leq_2 f(y)$ for all $x,y \in X_1$.
If $f$ is both $\ll$-preserving and $\leq$-preserving it is called causality preserving.
\item[(iii)] A map $f: X_1 \rar X_2$ is called locally $\tau, \ll$ or $\leq$-preserving if for all $x \in X_1$ there exists a neighbourhood $U \subseteq X_1$ of $x$ such that $f|_U$ is $\tau, \ll$ or $\leq$-preserving.
\end{itemize}
\end{defin}
\begin{rem}[Implication and counterexample]
One the one hand, it is clear that any $\tau$-preserving map is $\ll$-preserving. On the other hand, a $\tau$-preserving map need not be $\leq$-preserving in general. Indeed, consider in the Minkowski plane a null segment and a spacelike segment, say $\{(s,s) \mid s \in [0,1] \}$ and $\{(0,s) \mid s \in [0,1] \}$. Then the map $(s,s) \mapsto (0,s)$ is $\tau$-preserving (it vanishes identically in both cases). But $(s,s) \leq (t,t) \iff s \leq t$ while $(0,s)$ and $(0,t)$ are never causally related.
\end{rem}
The following is immediate from the definition.
\begin{cor}[Inverse is also preserving]
If a bijective  map $f:X_1 \rar X_2$ between two Lorentzian pre-length spaces is (locally) $\tau$-preserving (respectively $\ll$ or $\leq$-preserving), then so is its inverse. In particular, the neighbourhoods in the local case are compatible in the sense that $f|_U$ is preserving if and only if $f^{-1}|_{f(U)}$ is.
\qed
\end{cor}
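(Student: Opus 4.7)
The plan is to exploit the symmetric form of each of the three preservation conditions under a bijection. For the $\tau$-preserving case, the defining equation $\tau_1(x,y) = \tau_2(f(x),f(y))$ holds for all $x,y \in X_1$. Given any $x',y' \in X_2$, I would set $x := f^{-1}(x')$ and $y := f^{-1}(y')$; substituting into the hypothesis yields
\begin{equation*}
\tau_1(f^{-1}(x'),f^{-1}(y')) = \tau_2(f(x),f(y)) = \tau_2(x',y'),
\end{equation*}
which is the statement that $f^{-1}$ is $\tau$-preserving. Since $f$ is a bijection, the substitution $x = f^{-1}(x'), y = f^{-1}(y')$ sweeps out all pairs in $X_2 \times X_2$ as $(x',y')$ does, so no pair is missed.

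The $\ll$- and $\leq$-preserving cases follow the exact same pattern, now using that the defining biconditionals $x \ll_1 y \iff f(x) \ll_2 f(y)$ and $x \leq_1 y \iff f(x) \leq_2 f(y)$ are manifestly symmetric under $f \leftrightarrow f^{-1}$: substituting $x = f^{-1}(x'), y = f^{-1}(y')$ into either biconditional and reading right-to-left yields the corresponding biconditional for $f^{-1}$.

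For the local statement, I would apply the global result to the restriction $f|_U : U \to f(U)$, where $U$ and $f(U)$ are equipped with the induced Lorentzian pre-length structures as subspaces (in the sense established in the notation remark). This restriction is by hypothesis a $\tau$- (resp.\ $\ll$- or $\leq$-) preserving bijection between two Lorentzian pre-length spaces, so the global case gives that $(f|_U)^{-1} = f^{-1}|_{f(U)}$ is also preserving, establishing the claimed compatibility of neighbourhoods.

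There is no real obstacle here: the content of the corollary is entirely that a bijection appears symmetrically in each defining identity, and the local version is just the global version applied to subspaces. The only mild subtlety worth noting is that for $\ll$ and $\leq$ one must use the full biconditional (not merely the forward implication) in the definition of preservation — otherwise one would only obtain that $f^{-1}$ reflects, rather than preserves, the causal relations.
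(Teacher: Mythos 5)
Your proof is correct and matches the paper's (brief, essentially definitional) argument: substitute $x = f^{-1}(x')$, $y = f^{-1}(y')$ into the defining identity and read it as a statement about $f^{-1}$, then handle the local case by applying the global statement to the restriction $f|_U$. Your remark about needing the full biconditional in the $\ll$- and $\leq$-preserving cases is a sensible observation, even if the paper leaves it implicit.
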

As a final prerequisite, we discuss the underlying quotient semi-metric. In metric amalgamation, gluing happens along isometric subsets. In the Lorentzian setting the focus lies on the time separation and the causality relations where as the distance metric plays only a background role as a topological tool. 
Since we decided to not require any preservation of Lorentzian structure of the identified sets, it is too restrictive to insist on using metric (local) isometries.
That is why we decided to use locally bi-Lipschitz homeomorphisms\footnote{We call a map $f:X \to Y$ between metric spaces locally bi-Lipschitz if every $x \in X$ has a neighbourhood $U$ such that $f|_U:U \to f(U)$ and its inverse are Lipschitz.}
 instead. 
The bi-Lipschitz condition ensures that the quotient semi-metric is positive definite and, when assuming additionally some causality preservation, that a causal curve in one of the identified sets is also a causal curve in the other.
Note that in this case, however, there is in general no chance of obtaining a nice representation of the quotient semi-metric á la \cite[Lemma I.5.24]{BH99}. 
\begin{prop}[Metric amalgamation with locally bi-Lipschitz maps]
\label{metric amalgamation}
Let $X_1$ and $X_2$ be two metric spaces and let $A_1 \subseteq X_2$ and $A_2 \subseteq X_2$ be closed subspaces.
Let $A$ be a metric space and let $f_1 : A \rightarrow A_1$ and $f_2:A \rightarrow A_2$ be locally bi-Lipschitz homeomorphisms.
Let $d$ be the disjoint union metric on $X_1 \sqcup X_2$ and consider the equivalence relation generated by $f_1(a) \sim f_2(a)$ for all $a \in A$. Then the quotient semi-metric $\tilde{d}$ on $X:=(X_1 \sqcup X_2)/\sim$ with respect to $\sim$ is a metric.
\end{prop}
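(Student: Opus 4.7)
The proof reduces to verifying the four properties of a metric. Non-negativity is immediate from the definition; $\td([x],[x]) = 0$ follows from the 1-chain $(x,x)$; symmetry follows from reversing chains; and the triangle inequality follows from concatenating chains. These arguments are standard and parallel the classical metric amalgamation case, cf.\ \cite[Chapter I.5]{BH99}. All of the substance therefore lies in showing positive definiteness: $\td([x],[y]) = 0 \Rightarrow [x] = [y]$.

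As a preliminary reduction, analogous to Remark \ref{better chains} but using the ordinary triangle inequality for $d$ in place of the reverse triangle inequality for $\tau$, trivial identifications $y_i = x_{i+1}$ in a chain can be eliminated without increasing the length: merge the pair of segments $(x_i, y_i), (x_{i+1}, y_{i+1})$ into the single segment $(x_i, y_{i+1})$. Hence we may restrict to chains in which every identification is nontrivial, i.e.\ of the form $y_i = f_j(a_i)$, $x_{i+1} = f_{3-j}(a_i)$ for some $a_i \in A$ and $j \in \{1,2\}$; in particular such a chain alternates between $X_1$ and $X_2$.

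Suppose now $[x] \neq [y]$. \emph{Case 1:} some representative $x$ of $[x]$ lies in $X_i \setminus A_i$. Then $[x] = \{x\}$ is a singleton, and closedness of $A_i$ yields $\alpha := d(x, A_i) > 0$. A chain with no jump is a 1-chain $(x, y')$ with $y' \in [y] \cap X_i$, where $y' \neq x$ since $[y] \neq [x]$, so it contributes length $d(x, y') > 0$; a chain with at least one jump forces $y_1 \in A_i$ and hence contributes length at least $\alpha$. Taking infimum yields $\td([x],[y]) > 0$. By symmetry the same holds if a representative of $[y]$ lies outside $A_1 \cup A_2$. \emph{Case 2:} every representative of $[x]$ and $[y]$ lies in $A_1 \cup A_2$. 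By our chain reduction every point of any chain then lies in $A_1 \cup A_2$, and pulling back via $f_1, f_2$ the chain determines a sequence $(a_x = a_0, b_1, c_1, b_2, c_2, \ldots, a_y)$ in $A$. The local bi-Lipschitz hypothesis provides, around each point of $A$, a neighborhood $W$ and constant $L \geq 1$ on which both $f_1$ and $f_2$ are $L$-bi-Lipschitz. When consecutive points of the $A$-sequence lie in a common such patch, the corresponding chain segment (either $f_1(a_k) \to f_1(b_{k+1})$ in $X_1$ or $f_2(b_k) \to f_2(c_k)$ in $X_2$) has length at least $L^{-1}$ times the $A$-distance; summing and applying the triangle inequality in $A$ gives chain length $\geq L^{-1} d_A(a_x, a_y) > 0$. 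The general situation is handled by covering the $A$-trajectory by finitely many bi-Lipschitz patches and patching the estimates.

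The principal obstacle is Case 2, and specifically extending the single-patch estimate to chains whose trajectory in $A$ is not contained in a single bi-Lipschitz neighborhood; this requires a finite-cover/compactness argument together with care in choosing the threshold below which a chain cannot escape a fixed neighborhood of $a_x$. The bi-Lipschitz hypothesis (rather than mere continuity of the $f_i$) is essential throughout: without it, a short segment in $X_2$ near $A_2$ could correspond via $f_1 \circ f_2^{-1}$ to an arbitrarily long segment in $X_1$, allowing chains to achieve arbitrarily small displacements between inequivalent classes and causing $\td$ to collapse to a non-positive-definite semi-metric.
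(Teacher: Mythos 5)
Your argument tracks the paper's structure closely: you dispose of the easy metric axioms in the same way, you reduce to nontrivially alternating chains as in Remark~\ref{better chains}, and your Case~1 (some representative of $[x]$ lying outside $A_1 \cup A_2$, so closedness of $A_i$ gives a uniform lower bound) is the same as the paper's. The small cosmetic difference is that you pull chains back to $A$ and compare $d_A$-distances, while the paper works directly in $A_1$ via the composite $f = f_2 \circ f_1^{-1}$, but this does not change the substance.

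The genuine gap is precisely the step you flag as ``the principal obstacle'' and then do not resolve. Your single-patch estimate is fine, but the proposed extension by ``covering the $A$-trajectory by finitely many bi-Lipschitz patches and patching the estimates'' does not work: a chain is a \emph{finite set of points}, not a connected trajectory, so there is nothing forcing consecutive chain points into overlapping patches; the Lipschitz constants may blow up on the patches far from $a_x$; and, most fundamentally, a chain of small total length that wanders far from $a_x$ and returns would need a lower bound in regions where you have no control at all. The resolution, which the paper supplies, is to stop tracking the chain as soon as it escapes a \emph{single} small ball. Concretely: choose $r>0$ so that $B_r(x^1) \subseteq A_1$ and $B_r(x^2) \subseteq A_2$ are bi-Lipschitz patches (with a common constant $L$) avoiding $y^1, y^2$. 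Then iterate through the chain, converting each $d_2$-segment to $\tfrac{1}{L}$ times the corresponding $d_1$-segment and collapsing via the triangle inequality in $X_1$, \emph{up to the first index $j$ where $a_j$ leaves $B_r(x^1)$}. At that point the accumulated length is already at least $\tfrac{1}{L} d_1(x^1,a_j) \geq \tfrac{r}{L}$, and the rest of the chain only adds. This gives a lower bound for \emph{every} chain, either $\tfrac{r}{L}$ (if it escapes) or $\tfrac{1}{L} d_1(x^1,y^1) > 0$ (if it stays inside the patch, which is the case you handled). Your phrase ``care in choosing the threshold below which a chain cannot escape'' gestures at this idea, but you must actually make it the argument rather than fall back on a covering that is unavailable.
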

\begin{proof}
$\td([x],[x])=0$ for all $[x] \in X$ and the symmetry of $\td$ are immediate from the definition. 
Concerning the triangle inequality, let $(x_1,y_1,\ldots, x_n,y_n)$ be an $n$-chain from $[x]$ to $[y]$ such that $\sum_{i=1}^n d(x_i,y_i)<\td([x],[y])+\varepsilon$ and let $(y_1',z_1,\ldots,y_m',z_m)$ be an $m$-chain from $[y]$ to $[z]$ such that $\sum_{i=1}^m d(y_i',z_i)<\td([y],[z])+\varepsilon$. Then the concatenation $(x_1,y_1,\ldots,x_n,y_n,y_1',z_1,\ldots,y_m',z_m)$ is a chain from $[x]$ to $[z]$ of length less than $\td([x],[y])+\td([y],[z])+2\varepsilon$. Hence $\td([x],[z])\leq \td([x],[y])+\td([y],[z])+2\varepsilon$ and the claim follows.
It is left to show that $\td$ is positive definite. Let $[x],[y] \in X$. Note that if, say $[x] \in X_1 \setminus A_1$, i.e., $[x]=\{x^1\}$, then any chain that is not $(x^1,y^1)$ has to go through $A_1$ by similar arguments as in Remark \ref{better chains}. In both cases we end up with a positive distance: $d(x^1,y^1)>0$ since $d$ is a metric and $d(x^1,A_1)>0$ since $A$ is closed. So the only case left to consider is when both points lie in $A$. 
To this end, note that since $f_1$ and $f_2$ are locally bi-Lipschitz homeomorphisms, it follows that $f:=f_2 \circ f_1^{-1} : A_1 \to A_2$ and $f^{-1} : A_2 \to A_1$ are locally bi-Lipschitz homeomorphisms as well.
Let $[x] \neq [y], [x], [y] \in A$. Then $x^1,y^1 \in A_1$ and $x^2,y^2 \in A_2$.
Let $r>0$ be such that $y^1 \notin B_r(x^1)\subseteq A_1$ and $y^2 \notin B_r(x^2) \subseteq A_2$. By choosing $r$ smaller if necessary, we can assume that $B_r(x^1)$ and $B_r(x^2)$ are bi-Lipschitz neighbourhoods of $f$ and $f^{-1}$ for $x^1$ and $x^2$ in $A_1$ and $A_2$, respectively. 
We can furthermore suppose that this is with respect to the same Lipschitz constant $L \geq 1$. That is, we have
\begin{equation}
\frac{1}{L}d_2(f_1(a),f_1(b)) \leq 
d_1(a,b) \leq 
Ld_2(f(a),f_1(b))
\end{equation}
for all $a,b \in B_r(x^1)$ and
\begin{equation}
\frac{1}{L}d_1(f^{-1}(a'),f^{-1}(b')) \leq 
d_2(a',b') \leq 
Ld_1(f^{-1}(a'),f^{-1}(b'))
\end{equation}
for all $a',b' \in B_r(x^2)$.
By the above arguments we can assume that any chain is of the form $(x^1,f(a_1),f^{-1}(a_1),f^{-1}(a_2),f(a_2),f(a_2),\ldots,f(a_{n-1}),y^1)$ (it does not matter whether we start in $x^1$ or $x^2$ since we could add $(x^2,x^2)$ at the beginning of the chain without increasing its length, and similar for ending in $y^1$).
Given such a chain, by setting $a_0=x^1$ and $a_n=y^1$, there exists a minimal $j \in \{1, \ldots, n\}$ such that $a_i \notin B_r(x^1)$. 
Similarly, there exists a minimal $k \in \{1, \ldots, n\}$ such that $f(a_{k}) \notin B_r(x^2)$. Without loss of generality assume $j \leq k$. Then 
we compute
\begin{align*}
& d_1(x^1,a_1) + d_2(f(a_1),f(a_2)) + d_1(a_2,a_3) + \ldots + d_1(a_{n-1},y^1) \geq \\ 
& d_1(x^1,a_1) + \frac{1}{L}d_1(a_1,a_2) + d_1(a_2,a_3) + \ldots + d_1(a_{n-1},y^1) \geq \\
& \frac{1}{L}d_1(x^1,a_1) + \frac{1}{L}d_1(a_1,a_2) + \frac{1}{L}d_1(a_2,a_3) + \ldots + d_1(a_{n-1},y^1) \geq \\
& \frac{1}{L}d_1(x^1,a_3) + \ldots + d_1(a_{n-1},y^1) \geq \ldots \geq \\
& \frac{1}{L}d_1(x^1,a_{j-1}) + d_1(a_{j-1},a_j) + \ldots + d_1(a_{n-1},y^1) \geq \\
& \frac{1}{L}d_1(x^1,a_{j-1}) + \frac{1}{L}d_1(a_{j-1},a_j) + \ldots + d_1(a_{n-1},y^1) \geq \\
& \frac{1}{L}d_1(x^1,a_{j}) + \ldots + d_1(a_{n-1},y^1) \geq \\
& \frac{1}{L}r + \ldots + d_1(a_{n-1},y^1) > \frac{1}{L}r > 0.
\end{align*}
Thus, we found a uniform lower bound for any chain from $[x]$ to $[y]$ and so $\td$ is positive definite.
\end{proof}
\subsection{Lorentzian amalgamation}
We now introduce the central object of this paper, which allows us to create a new Lorentzian pre-length space out of old ones by gluing them together, a process similar to the amalgamation of metric spaces. To avoid pathological counterexamples, we have to make minor additional assumptions on the identified subsets.
\begin{defin}[Lorentzian amalgamation]
\label{amalgamation}
Let $(X_1,d_1,\leq_1,\ll_1,\tau_1)$ and $(X_2,d_2,\leq_2,\ll_2,\tau_2)$ be two Lorentzian pre-length spaces. Let $(A,d_A,\ll_A,\leq_A,\tau_A)$ be a Lorentzian pre-length space and let $A_1$ and $A_2$ be closed non-timelike locally isolating subspaces of $X_1$ and $X_2$, respectively. 
Let $f_1 : A \rar A_1$ and $f_2 : A \rar A_2$ be locally bi-Lipschitz homeomorphisms. Suppose that the causality of $A_1$ and $A_2$ are compatible in the following sense: for all $a \in A$ we have $I_1^{\pm}(f_1(a)) \neq \emptyset \iff I_2^{\pm}(f_2(a)) \neq \emptyset$.
Let $(X_1 \sqcup X_2, d, \ll, \leq, \tau)$ be the Lorentzian disjoint union of $X_1$ and $X_2$ and consider the equivalence relation $\sim$ on $X_1 \sqcup X_2$ generated by $f_1(a) \sim f_2(a)$ for all $a \in A$. Then $((X_1 \sqcup X_2)/\sim, \td, \tll, \tleq, \ttau)$ is called the Lorentzian amalgamation (with respect to $A$) of $X_1$ and $X_2$ and is denoted by $X_1 \sqcup_A X_2$.
\end{defin}
\begin{rem}[On notation and conventions II]
As in the metric case, the space $A$ is usually introduced for easier notation, especially when dealing with an amalgamation of more than two spaces. One can think of $A$ as being any of the identified spaces $A_i$. For just two spaces, it might be more convenient to only consider a map $f:A_1 \to A_2$ (and its inverse). In fact, we will formulate the gluing theorem for two spaces only. Since most of the results in the current chapter can easily be generalized to several spaces, we will work with an extra space $A$. \\

We will use slightly sloppy notation and refer to $A$ as a subset of the amalgamation $X:=X_1 \sqcup_A X_2$ and we will view $X_1$ and $X_2$ as subsets of $X$ via the identifications with $\pi(X_1)$ and $\pi(X_2)$, respectively. 
For example, by $[a] \in A$ we mean $[a]=\{f_1(a),f_2(a)\}$. If a point is originally not in one of the identified sets, say $x \in X_1 \setminus A_1$, then it is from its on equivalence class, i.e., $[x]=\{ x \}$. If we do not care which space such a singleton is from, we may write $[x] \in X \setminus A$.
Occasionally, it will be more convenient to omit the identifying maps $f_i$. We then denote the origin of a point by a superscript. That is, if $[a] \in A$ we have $[a]=\{a^1,a^2\}$ and if $[x] \in X_1 \setminus A_1 \subseteq X \setminus A$ then $[x]=\{x^1\}$. \\

Dealing with pasts and futures in the amalgamation can also be a bit tricky notation-wise, even if we require that the identifying maps preserve structure. To this end we denote by a subscript with respect to which causality the set is constructed. For example, we write $J_1^+(x^1)=\{y^1 \in X_1 \mid x^1 \leq_1 y^1\}$ or $I_X([x],[y])=I_X^+([x]) \cap I_X^-([y])=\{[p] \in X \mid [x] \tll [p] \tll [y] \}$.
\end{rem}
Next, we formulate a lemma that is important to show the last remaining property of semi-continuity of $\ttau$.
\begin{lem}[Restricting to timelike chains]
\label{tlchains}
Let $X:=X_1 \sqcup_A X_2$ be the Lorentzian amalgamation of two Lorentzian pre-length spaces $X_1$ and $X_2$. If $\ttau([x],[y])>0$ then there is a timelike chain, i.e., $x_i \ll y_i$ for all $i$, from $[x]$ to $[y]$ whose length is arbitrarily close to $\ttau([x],[y])$.
\end{lem}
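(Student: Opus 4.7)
The plan is to start with a near-optimal chain and iteratively replace each degenerate (non-timelike) step by a timelike one, controlling the cumulative length loss. Fix $\varepsilon > 0$ and pick a chain $(x_1, y_1, \ldots, x_n, y_n)$ from $[x]$ to $[y]$ with $\sum_i \tau(x_i,y_i) > \ttau([x],[y]) - \varepsilon/2$. By Remark \ref{better chains} I may assume $x_1 = x$, $y_n = y$, and $y_i \neq x_{i+1}$ for all $i$; call a step \emph{timelike} if $\tau(x_i,y_i)>0$ (equivalently $x_i \ll y_i$) and \emph{degenerate} otherwise. Positivity of the total length forces at least one timelike step to be present. Any trivial step with $x_i = y_i$ can be deleted immediately because $y_{i-1}\sim x_i = y_i\sim x_{i+1}$ gives $y_{i-1}\sim x_{i+1}$, so I may further assume $x_i \neq y_i$ for every remaining degenerate step; Remark \ref{better chains} then forces $x_i, y_i \in A_j$ for some $j$, with partners $y_{i-1}, x_{i+1} \in A_{3-j}$.

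This is where the non-timelike locally isolating hypothesis becomes essential. I propagate outward from a timelike step so that every degenerate step has a timelike neighbor; say step $i{-}1$ is timelike, with $x_{i-1} \ll_{3-j} y_{i-1}$. Then $I^-_{3-j}(y_{i-1}) \neq \emptyset$, and the compatibility assumption of Definition \ref{amalgamation} gives $I^-_j(x_i) \neq \emptyset$. By non-past local isolation of $A_j$, pick $x_i^- \in A_j$ arbitrarily close to $x_i$ with $x_i^- \ll_j x_i$; applying the reverse triangle inequality to $x_i^- \ll_j x_i \leq_j y_i$ produces $\tau_j(x_i^-, y_i) \geq \tau_j(x_i^-, x_i) > 0$, so $(x_i^-, y_i)$ is timelike. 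Let $p \in A_{3-j}$ be the partner of $x_i^-$; local bi-Lipschitzness of the identifying maps places $p$ arbitrarily close to $y_{i-1}$ in $X_{3-j}$, and openness of $I^+_{3-j}(x_{i-1})$ (a consequence of lower semi-continuity of $\tau_{3-j}$) guarantees $x_{i-1} \ll_{3-j} p$ once the perturbation is small enough. Replacing $y_{i-1}, x_i$ by $p, x_i^-$ converts the degenerate step to a timelike one while, by lower semi-continuity again, keeps $\tau_{3-j}(x_{i-1}, p)$ within any prescribed $\delta$ of $\tau_{3-j}(x_{i-1}, y_{i-1})$. The symmetric scenario, where step $i{+}1$ is the timelike neighbor, is handled in the mirror way using non-future local isolation.

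Iterating the construction with $\delta < \varepsilon/(2n)$ at each step produces a chain whose every step is timelike and whose length differs from the original by at most $\varepsilon/2$, hence is at least $\ttau([x],[y]) - \varepsilon$. The principal difficulty is precisely this coordinated perturbation: local isolation in $A_j$ together with local bi-Lipschitzness of the identification has to produce a partner $p$ that is simultaneously close enough to preserve the adjacent timelike relation (via openness of $\ll$) and close enough to make the $\tau$-drop negligible (via lower semi-continuity of $\tau$). Everything else --- keeping count of degenerate steps, checking that each iteration strictly decreases that count, and summing per-iteration losses --- is routine bookkeeping.
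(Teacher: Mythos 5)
Your proof is correct and follows essentially the same route as the paper: find a timelike step, then for an adjacent degenerate step use the compatibility hypothesis $I^{\pm}_1(f_1(a)) \neq \emptyset \iff I^{\pm}_2(f_2(a)) \neq \emptyset$ together with the non-timelike local isolation of the $A_i$ to perturb the shared gluing point, apply push-up to make the degenerate step timelike, and use lower semi-continuity of $\tau$ (transported between $A_1$ and $A_2$ via the identifying homeomorphism) to cap the loss on the adjacent timelike step by $\varepsilon/n$. The only cosmetic difference is that you move the shared point into the past relative to a timelike step on the left, while the paper's displayed case moves it into the future relative to a timelike step on the right; these are mirror images of the same argument, and you are somewhat more explicit about the outward propagation and cumulative bookkeeping that the paper compresses into ``We can do this for all other entries.''
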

\begin{proof}
Let $[x], [y] \in X$ with $\ttau([x],[y])>0$. Given small enough $\varepsilon > 0$ we find a chain $(x_1,y_1,\ldots,x_n,y_n)$ such that $\sum_{i=1}^n \tau(x_i,y_i)>\ttau([x],[y])-\varepsilon > 0$. 
By Remark \ref{better chains}, we can assume that it is of the form $\tau_1(x^1,f_1(a_1)) + \tau_2(f_2(a_1),f_2(a_2)) + \ldots + \tau_2(f_2(a_n),y^2)$. Note that the assumption of $x^1 \in [x]$ does not lose any generality: if this were not the case the chain simply would start out in $X_2$. Similarly for $y^2 \in [y]$.
Since the length of this chain is positive, we have that at least one entry is positive, say $\tau_1(f_1(a_j),f_1(a_{j+1}))>0$. Then $I_1^+(f_1(a_j)) \neq \emptyset$ and hence also $I_2^+(f_2(a_j)) \neq \emptyset$. Suppose $\tau_2(f_2(a_{j-1}),f_2(a_j))=0$. 
Consider the neighbourhood $S_1:=\{a \in A_1 \mid \tau(a,f_1(a_{j+1}))>\tau(f_1(a_j),f_1(a_{j+1}) - \frac{\varepsilon}{n} \}$ of $f_1(a_1)$ in $A_1$, which is open by the lower semi-continuity of $\tau_1$. Since $f_1$ and $f_2$ are homeomorphisms it follows that $S_2:=f_2(f_1^{-1}(S_1))$ is an open neighbourhood of $f_2(a_j)$ in $A_2$.
Since $A_2$ is non-timelike locally isolating and  $I_2^+(f_2(a_j)) \neq \emptyset$, we find $f_2(b_+) \in S_2$ such that $f_2(a_j) \ll f_2(b_+)$. Then by the push-up property of $\tau_1$ we have $f_2(a_{j-1}) \ll f_2(b_+)$. Also, $f_1(b_+) \in S_1$. Then
\begin{equation}
\tau_2(f_2(a_{j-1}),f_2(b_+)) + \tau_1(f_1(b_+),f_1(a_{j+1})) > 
\tau_2(f_2(a_{j-1}),f_2(a_j)) + \tau_1(f_1(a_j),f_1(a_{j+1})) - \frac{\varepsilon}{n}.
\end{equation}
We can do this for all other entries of this chain to end up with a timelike chain whose length is at most $\varepsilon$ less than the chain we started with. 
Figure \ref{tlchains pic} illustrates this process.
\end{proof}
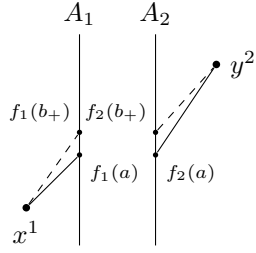
\begin{figure}
\begin{center}
\begin{tikzpicture}
\coordinate (A1) at (0,-1.2);
\coordinate [label=90: {$A_1$}] (A2) at (0,1.6);
\coordinate (B1) at (1,-1.2);
\coordinate [label=90: {$A_2$}] (B2) at (1,1.6);

\coordinate [circle, fill=black, inner sep=1pt, label=below: {$x^1$}] (p) at (-0.7,-0.7);
\coordinate [circle, fill=black, inner sep=1pt, label=right: {$y^2$}] (q) at (1.8,1.2);
\begin{scriptsize}

\coordinate [circle, fill=black, inner sep=0.7pt, label=340: {$f_2(a)$}] (y) at (1,0);
\coordinate [circle, fill=black, inner sep=0.7pt, label=330: {$f_1(a)$}] (x) at (0,0);

\coordinate [circle, fill=black, inner sep=0.7pt, label=110: {$f_2(b_+)$}] (y') at (1,0.3);
\coordinate [circle, fill=black, inner sep=0.7pt, label=110: {$f_1(b_+)$}] (x') at (0,0.3);
\end{scriptsize}

\draw (A1)--(A2);
\draw (B1)--(B2);
\draw (p)--(x);
\draw (y)--(q);
\draw [dashed] (p)--(x');
\draw [dashed] (y')--(q);
\end{tikzpicture}
\end{center}
\caption{Moving $a$ to make the null piece connecting $x^1$ and $f_1(a)$ timelike.}
\label{tlchains pic}
\end{figure}
\begin{ex}[On minimal assumptions and bad behaviour]
In Definition \ref{amalgamation} the compatibility of the causality in $A_1$ and $A_2$ via $I_1^{\pm}(f_1(a)) \neq \emptyset \iff I_2^{\pm}(f_2(a)) \neq \emptyset$ for all $a \in A$, which is weaker than $f_1$ and $f_2$ being locally $\ll$-preserving, is truly a necessary condition. Let $X_1$ and $X_2$ be a closed unit square in the Minkowski plane equipped with the restricted Lorentzian structure. Identify them along a vertical line segment and reverse the time orientation in one of the squares, as is indicated by the arrows in Figure \ref{mink square}. Then $I_1^+(a^1)=\emptyset$ but $I_2^+(a^2) \neq \emptyset$. Choosing endpoints and a sequence as in Example \ref{gluing example1} leads to a similar failure of the lower semi-continuity of $\ttau$.
\begin{figure}
\begin{center}
\begin{tikzpicture}
\draw (0,0) -- (1.5,0) -- (1.5,1.5) -- (0,1.5) -- (0,0);
\draw (0.75,0) -- (0.75,0.75);
\draw [latex reversed-](0.75,0.75) -- (0.75,1.5);

\draw (2.5,0) -- (4,0) -- (4,1.5) -- (2.5,1.5) -- (2.5,0);
\draw [-latex] (3.25,1.5) -- (3.25,0.75);
\draw (3.25,0.8) -- (3.25,0);

\begin{scriptsize}
\coordinate [circle, fill=black, inner sep=0.5pt, label=90: {$a^1$}] (a1) at (0.75,1.5);
\coordinate [circle, fill=black, inner sep=0.5pt, label=90: {$a^2$}] (a2) at (3.25,1.5);
\coordinate [circle, fill=black, inner sep=0.7pt] (q2) at (3.5,0.5);
\coordinate [circle, fill=black, inner sep=0.7pt] (p1) at (0.4,1.15);
\coordinate (pn) at (0.2,1.35);
\end{scriptsize}

\draw [dotted] (pn) -- (p1);
\draw [color=red] (a2) -- (q2);
\draw [color=red] (a1) -- (p1);

\draw [dashed] (a1) -- (1.5,0.75);
\draw [dashed] (a1) -- (0,0.75);

\draw [dashed] (a2) -- (4,0.75);
\draw [dashed] (a2) -- (2.5,0.75);
\end{tikzpicture}
\end{center}
\caption{This amalgamation leads to a $\ttau$ that is not lower semi-continuous. Note that the arrows denote the time orientation and not the gluing identifications.}
\label{mink square}
\end{figure}
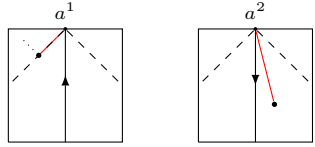
However, this does not mean that every Lorentzian pre-length space resulting from the amalgamation process is well behaved. Indeed, similar to the above example let $X_1$ and $X_2$ be the whole Minkowski plane and identify them along a vertical line with reversed time orientation in one space. Then although $X_1$ and $X_2$ are (much more than) chronological, the amalgamation fails to be chronological and moreover we have $\ttau([x],[y]) = \infty$ for all $[x],[y]$.
\end{ex}
\begin{rem}[Convergence in the amalgamation]
\label{convergence}
Suppose $[x_n] \to [x]$.
Clearly, if $[x]=\{x^1 \}$ then $[x_n]=\{x_n^1\}$ for large enough $n$. If $[x] \in A$, then at least in one of $X_1$ or $X_2$, say $X_1$, there exists a subsequence $x_{n_k}^1 \to x^1$. 
Indeed, if this were not the case then there would exist neighbourhoods of $x^1$ and $x^2$ in $X_1$ and $X_2$, respectively, that do not contain any points of the sequence. This is in contradiction to how the quotient topology is defined.
\end{rem}
\begin{prop}[Amalgamation is Lorentzian pre-length space]
Let $X$ be the Lorentzian amalgamation of two Lorentzian pre-length spaces $X_1$ and $X_2$. Then $X$ is a Lorentzian pre-length space.
\end{prop}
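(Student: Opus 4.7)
The plan is to verify each defining axiom of a Lorentzian pre-length space on $X := X_1 \sqcup_A X_2$. Most of them follow from results already established in this subsection: the distance $\td$ is a genuine metric by Proposition \ref{metric amalgamation} applied to the underlying metric spaces, $(\tX, \tll, \tleq)$ is a causal space by the earlier proposition on quotient causal spaces, $\ttau$ satisfies the reverse triangle inequality for $\tleq$-related points by the corresponding proposition, and the equivalence $\ttau([x],[y]) > 0 \iff [x] \tll [y]$ is built into the definition of $\tll$. So the only remaining task is to prove that $\ttau$ is lower semi-continuous with respect to $\td$, which is also where the standing assumptions on $A_1, A_2$ and $f_1, f_2$ genuinely enter.

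To establish lower semi-continuity I would fix sequences $[x_n] \to [x]$ and $[y_n] \to [y]$ in $\tX$ and aim for $\liminf_n \ttau([x_n],[y_n]) \geq \ttau([x],[y])$. The case $\ttau([x],[y]) = 0$ is automatic, so I assume the value is positive and fix $\varepsilon > 0$. Lemma \ref{tlchains} produces a timelike chain $(x_1, y_1, \ldots, x_k, y_k)$ from $[x]$ to $[y]$ of total length exceeding $\ttau([x],[y]) - \varepsilon/2$, and Remark \ref{better chains} lets me prescribe the endpoints $x_1$ and $y_k$ to be any chosen representatives of $[x]$ and $[y]$. The strategy is then to keep the middle of the chain untouched and replace only its two endpoints by representatives of $[x_n]$ and $[y_n]$. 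Since $x_1 \ll y_1$ (so both lie in the same $X_{i_0}$), lower semi-continuity of $\tau_{i_0}$ guarantees that a convergent representative sequence $x_n^{i_0} \to x_1$ eventually satisfies $\tau_{i_0}(x_n^{i_0}, y_1) > \tau_{i_0}(x_1, y_1) - \varepsilon/4 > 0$, which makes $x_n^{i_0} \ll y_1$ a valid chain entry; an analogous modification is performed at the other end. The resulting chain from $[x_n]$ to $[y_n]$ has total length at least $\ttau([x],[y]) - \varepsilon$.

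The main technical obstacle is that convergence $[x_n] \to [x]$ in the quotient topology does not in general yield representative-wise convergence along the whole sequence, and when $[x] \in A$ the space $X_{i_0}$ in which such convergence may occur is not prescribed: Remark \ref{convergence} only provides this up to a subsequence. I would handle this via the standard ``every subsequence has a further subsequence'' trick: it suffices to show the $\liminf$ bound along a further subsequence of an arbitrarily chosen subsequence, and along such a sub-subsequence we have $x_n^{i_0} \to x^{i_0}$ and $y_n^{j_0} \to y^{j_0}$ in some $X_{i_0}$ and $X_{j_0}$. I then build the approximating chain described above with $x_1 := x^{i_0}$ and $y_k := y^{j_0}$, using the freedom granted by Remark \ref{better chains}. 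The remaining checks, namely that the inserted endpoints lie in the same component as their neighbors in the chain and that the middle equivalences $y_j \sim x_{j+1}$ are untouched, are automatic from the structure of the original timelike chain.
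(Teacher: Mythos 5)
Your proof is correct and mirrors the paper's: both reduce the claim to lower semi-continuity of $\ttau$, approximate $\ttau([x],[y])$ from below by a timelike chain via Lemma \ref{tlchains} and Remark \ref{better chains}, and then replace only the two endpoints by representatives of the converging sequences, using lower semi-continuity of the original $\tau_i$ together with the representative-wise convergence from Remark \ref{convergence} (your explicit ``every subsequence has a further subsequence'' reduction spells out a step the paper leaves implicit). The one detail to tighten is the threshold in the endpoint replacement: $\varepsilon/4$ could exceed the $\tau$-length of the first or last segment, so $\tau_{i_0}(x_n^{i_0},y_1) > \tau_{i_0}(x_1,y_1) - \varepsilon/4 > 0$ need not hold for the fixed chain; as in the paper, take instead $\delta := \tfrac12\min\{\tau_{i_0}(x_1,y_1),\,\tau_{j_0}(x_k,y_k),\,\varepsilon\}$, which guarantees both positivity of the modified segments and the desired $\varepsilon$ bound.
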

\begin{proof}
We only need to prove that $\ttau$ is lower semi-continuous. Let $[x] , [y] \in X$ and let $[x_n] \rar [x], [y_n] \rar [y]$. Then we need to show $\ttau([x_n],[y_n]) \geq \ttau([x],[y]) - \varepsilon$ for all $\varepsilon > 0$.
Note that if $\ttau([x],[y])=0$ there is nothing to show, so suppose $\ttau([x],[y])>0$. 
Given a small enough $\varepsilon > 0$, by Remark \ref{better chains} and Lemma \ref{tlchains} we find a timelike chain such that 
\begin{equation}
\tau_1(x^1,f_1(a_1)) + \tau_2(f_2(a_1),f_2(a_2)) + \ldots + \tau_2(f_2(a_m),y^2) > \ttau([x],[y]) -\varepsilon > 0.
\end{equation}
Assume $\varepsilon$ is so small that even $\ttau([x],[y]) - 3\varepsilon > 0$.
Then the claim follows from the lower semi-continuity of the original time separation functions: 
by Remark \ref{convergence} and since we assumed $x^1 \in [x], y^2 \in [y]$, we can further assume $x_n^1 \to x^1$ and $y_n^2 \to y^2$ (otherwise $[x],[y] \in A$ and we find subsequences in the other space).
Let
\begin{equation}
\delta:= \frac{1}{2} \min \{\tau_1(x^1,f_1(a_1)), \tau_2(f_2(a_m),y^2), \varepsilon \} > 0.
\end{equation}
Then since $\tau_1$ and $\tau_2$ are lower semi-continuous there exists $n_0 \in \N$ such that $\tau_1(x_n^1,f_1(a_1))>\tau_1(x^1,f_1(a_1)) - \delta$ and $\tau_2(f_2(a_m),y_n^2)>\tau_2(f_2(a_m),y^2) - \delta$ for all $n \geq n_0$. Then
\begin{align*}
\ttau([x_n],[y_n]) & \geq 
\tau_1(x_n^1,f_1(a_1)) + \tau_2(f_2(a_1),f_2(a_2)) + \ldots + 
\tau_2(f_2(a_m),y_n^2) \\
& > \tau_1(x^1,f_1(a_1)) + \tau_2(f_2(a_1),f_2(a_2)) + \ldots + 
\tau_2(f_2(a_m),y^2) - 2 \delta \\
& > \ttau([x],[y]) - \varepsilon - 2\delta > \ttau([x],[y]) - 3\varepsilon >0
\end{align*}
and we are done.
\end{proof}
Finally, we note that if we assume global preservation of the Lorentzian structure, the quotient time separation has the following form, which is familiar from the metric case.
\begin{prop}[Short form of quotient time separation]
\label{reptimesep}
Let $X:= X_1 \sqcup_A X_2$ be the Lorentzian amalgamation of two Lorentzian pre-length spaces $X_1$ and $X_2$ and assume that $f_1$ and $f_2$ are $\tau$-preserving and causality preserving. Then the quotient time separation has the following form:
\begin{equation}
\ttau([x],[y])=
\begin{cases} \tau_i(x^i,y^i) & x^i,y^i \in X_i, \\
\underset{[a] \in J_X([x],[y]) \cap A}{\sup} \{ \tau_i(x^i,f_i(a))+\tau_j(f_j(a),y^j) \} & x^i \in X_i, y^j \in X_j, i,j \in \{1,2\}, i \neq j.
\end{cases}
\end{equation}
\end{prop}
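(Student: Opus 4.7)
I plan to prove both inequalities separately in each of the two cases. The $\geq$ direction is essentially free: the same-space case follows from Corollary \ref{easy properties}(i) via the trivial $1$-chain $(x^i, y^i)$, while in the cross-space case, for any $[a] \in J_X([x],[y]) \cap A$ the two-step chain $(x^i, f_i(a), f_j(a), y^j)$ is admissible (the relations $x^i \leq_i f_i(a)$ and $f_j(a) \leq_j y^j$ are forced by the global causality-preservation together with the collapsing argument below) and witnesses $\tau_i(x^i, f_i(a)) + \tau_j(f_j(a), y^j) \leq \ttau([x],[y])$; the supremum inherits this bound.

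\textbf{Collapsing chains.} The content lies in the reverse inequalities. The key tool is the transport map $\phi := f_2 \circ f_1^{-1} : A_1 \to A_2$: since $f_1, f_2$ are globally $\tau$- and causality-preserving, so are $\phi$ and $\phi^{-1}$. Hence whenever $f_1(a) \leq_1 f_1(b)$ holds in $A_1$, also $f_2(a) \leq_2 f_2(b)$ holds in $A_2$ with the same $\tau$-value. Given any chain $(x_1, y_1, \ldots, x_n, y_n)$ from $[x]$ to $[y]$, Remark \ref{better chains} allows me to assume $x_1$ is the preferred representative of $[x]$ in $X_i$, similarly for $y_n$, and that every junction is nontrivial, which forces all junction points to lie in $A$ and the chain to alternate strictly between $X_1$ and $X_2$. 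Labelling the junctions $a_1, \ldots, a_{n-1} \in A$, I fold every $X_{3-i}$-segment back into $X_i$ by replacing $f_{3-i}(a_{k-1}) \leq_{3-i} f_{3-i}(a_k)$ with $f_i(a_{k-1}) \leq_i f_i(a_k)$; length is preserved by $\tau$-preservation, and the chain telescopes because the uniqueness of the $A_i$-representative in each equivalence class makes the folded endpoints coincide with the adjacent untouched endpoints. In the same-space case this yields a causal chain in $X_i$ from $x^i$ to $y^i$ of identical total length, which by the reverse triangle inequality is $\leq \tau_i(x^i, y^i)$. In the cross-space case I pick any junction $[a] := [a_k]$ and split the chain at that point: the first $k$ segments fold into $X_i$ and telescope to a chain from $x^i$ to $f_i(a)$ of length $\leq \tau_i(x^i, f_i(a))$, while the last $n-k$ fold into $X_j$ with length $\leq \tau_j(f_j(a), y^j)$. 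Since $[a] \in J_X([x],[y]) \cap A$ (the two subchains witness the causal relations in $X$), the total is bounded by the claimed supremum.

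\textbf{Main obstacle.} I expect the bulk of the technical work to be careful bookkeeping: verifying that folding an alternate segment into the other space results in endpoints that actually match the neighbouring junction points on the nose (not merely up to $\sim$), so that the spliced object really is a chain and not a broken one. This relies crucially on two features of the setup, namely the nontriviality of junctions (ensuring all jump points lie in $A$ where $\phi$ is defined) and the global, rather than merely local, preservation properties of $f_1$ and $f_2$ (allowing transport of arbitrarily long $A$-segments without any diameter restriction). Minor edge cases, such as $[x]$ or $[y]$ itself lying in $A$, or degenerate chains of length $n=1$, are absorbed into the representative choice of Remark \ref{better chains}.
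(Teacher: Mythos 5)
Your proposal follows essentially the same approach as the paper's own proof: assume, via Remark \ref{better chains}, a chain with nontrivial junctions in $A$, use the global $\tau$- and causality-preservation of $f_1,f_2$ to fold alternating $A_{3-i}$-segments back into $A_i$, and then collapse by the reverse triangle inequality, in the same-space case all the way to $\tau_i(x^i,y^i)$ and in the cross-space case down to a two-step chain through a single junction $[a]\in J_X([x],[y])\cap A$. The only cosmetic differences are that you spell out the $\geq$ direction separately and allow folding around an arbitrary junction $[a_k]$, whereas the paper implicitly treats $\geq$ via Corollary \ref{easy properties} and in the cross-space computation collapses everything past the first junction $a_1$; both variants yield the same bound.
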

\begin{proof}
Let, say, $x^1 \in [x],y^1 \in [y]$ and let $(x_1,y_1, \ldots, x_n,y_n)$ be an $n$-chain from $[x]$ to $[y]$. By Remark \ref{better chains} we can assume this chain to have nontrivial relations everywhere and $x_1=x^1$ as well as $y_n=y^1$.
Intuitively, the chain starts out at $x^1$ in $X_1$, moves to $A_1$, jumps around between $A_1$ and $A_2$ and ends up at $y^1$ in $X_1$ again. Since $f_1$ and $f_2$ are $\tau$-preserving and causality preserving, we can replace all distances from points in $A_2$ with corresponding equal distances in $A_1$. But then by several applications of the reverse triangle inequality for $\tau_1$ we can replace all these distances in $A_1$ by a single distance to obtain a longer chain. Finally, we can omit the detour through $A_1$ altogether by replacing it with the direct distance from $x^1$ to $y^1$ in $X_1$. \\

The second case is symmetric, so suppose $x^1 \in [x]$ and $y^2 \in [y]$. If we have an $n$-chain $(x_1,y_1, \ldots, x_n,y_n)$ with nontrivial relations, then as above the chain starts out at $x^1$ in $X_1$, moves to $A_1$, jumps around between $A_1$ and $A_2$, but this time ends up at $y^2$ in $X_2$. Since $f_1$ and $f_2$ are $\tau$ and causality preserving we can replace, say, all distances in $A_1$ with equal distances in $A_2$ and then apply the reverse triangle inequality multiple times to end up with a 2-chain with at least the same length as in the claim. More precisely, we estimate the length of the $n$-chain 
\begin{align*}
& \sum_{i=1}^n \tau(x_i,y_i) = \\
& \tau_1(x^1,f_1(a_1))+\tau_2(f_2(a_1),f_2(a_2)) + \ldots + 
\tau_1(f_1(a_{n-2})),f_1(a_{n-1})+\tau_2(f_2(a_{n-1}),y^2) = \\
& \tau_1(x^1,f_1(a_1)) + \tau_2(f_2(a_1),f_2(a_2)) + \ldots + 
\tau_2(f_2(a_{n-2}),f_2(a_{n-1}))+\tau_2(f_2(a_{n-1}),y^2) \leq \\
& \tau_1(x^1,f_1(a_1))+\tau_2(f_2(a_1),y^2).
\end{align*}
\end{proof}
The following lemma is both helpful for the gluing theorem and a nice statement in its own right. It says that certain diamonds in the glued space are built from corresponding diamonds in the original spaces.
\begin{lem}[Amalgamated diamonds]
\label{amalgamated diamonds}
Let $X$ be the amalgamation of two Lorentzian pre-length spaces $X_1$ and $X_2$. Assume that $f_1: A \to A_1$ and $f_2: A \to A_2$ are $\leq$-preserving. Then we have the following decomposition of causal diamonds in $X$:
\begin{itemize}
\item[(i)] If $[x],[y] \in A$, then $J_X([x],[y])=\pi(J_1(x^1,y^1) \sqcup J_2(x^2,y^2))$.
\item[(ii)] Let $i \in \{1,2\}$. If $x^i,y^i \in X_i \setminus A$ are such that $J_i(x^i,y^i) \cap A_i = \emptyset$, then $J_X([x],[y])=\pi(J_i(x^i,y^i))$. 
\end{itemize}
If $f_1$ and $f_2$ are $\ll$-preserving, we get the same statement for timelike diamonds.
\end{lem}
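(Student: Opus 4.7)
The plan is to prove each inclusion of (i) and (ii) separately; the two directions have quite different flavors. The $\supseteq$ inclusion is essentially immediate in both parts: for any representative $p \in J_i(x^i, y^i)$ one has $x^i \leq_i p \leq_i y^i$ in $X_i$, and Corollary \ref{easy properties}(ii) lifts this to $[x] \tleq [p] \tleq [y]$ in the amalgamation, so $[p] \in J_X([x],[y])$. The substance lies in the reverse inclusion.

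For $\subseteq$, the central tool will be a \emph{chain reduction principle}. Given any chain from $[u]$ to $[v]$, Remark \ref{better chains} lets us assume every middle identification $y_i \sim x_{i+1}$ is nontrivial, i.e.\ of the form $\{f_1(a),f_2(a)\}$. The chain then decomposes into maximal blocks, each entirely within a single $X_s$, with consecutive blocks alternating between $X_1$ and $X_2$. Within each block, transitivity of $\leq_s$ collapses it into a single $\leq_s$-relation between its endpoints, which for intermediate blocks lie in $A_s$. The $\leq$-preservation of $f_1$ and $f_2$ then lets us translate each such endpoint relation between $A_1$ and $A_2$ via $f_2 \circ f_1^{-1}$. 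Iterating, the whole chain collapses to a single relation in any chosen $X_i$ between $X_i$-representatives of its endpoints, whenever such representatives exist.

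For part (i), apply the reduction to the chains from $[x]$ to $[p]$ and from $[p]$ to $[y]$, exploiting that $[x],[y] \in A$ supply representatives in either $X_1$ or $X_2$. If $[p] = \{p^i\} \in X_i \setminus A$, the reduction in $X_i$ yields $x^i \leq_i p^i \leq_i y^i$; if $[p] \in A$, the reduction works equally well in either $X_1$ or $X_2$ and by $\leq$-preservation produces both $p^1 \in J_1(x^1,y^1)$ and $p^2 \in J_2(x^2,y^2)$. Either way, $[p]$ possesses a representative in $J_1(x^1,y^1) \sqcup J_2(x^2, y^2)$. For part (ii), concatenate the two chains into a single chain from $[x]$ to $[y]$. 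Should it contain any nontrivial jump, the first $A_i$-point $q$ encountered there would, by the reduction principle applied to the sub-chains before and after $q$, satisfy $x^i \leq_i q \leq_i y^i$, placing $q$ in $J_i(x^i,y^i) \cap A_i$ and contradicting the hypothesis. Hence every chain from $[x]$ through $[p]$ to $[y]$ stays entirely in $X_i$, which forces $[p] = \{p^i\}$ and yields $p^i \in J_i(x^i, y^i)$.

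The timelike case proceeds along identical lines: $\ll$-preservation of $f_1, f_2$ replaces $\leq$-preservation, and the transitivity of $\ll$ (built into the definition of a Lorentzian pre-length space) plays the role of transitivity of $\leq$ inside each block. The main obstacle I anticipate is purely notational rather than conceptual: carefully bookkeeping the block decomposition and the iterated translations via $f_2 \circ f_1^{-1}$ is needed to make the reduction principle airtight, but no genuinely new idea seems required beyond what $\leq$- (respectively $\ll$-) preservation already guarantees.
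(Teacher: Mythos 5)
Your proof is correct and follows essentially the same strategy as the paper: $\supseteq$ via Corollary \ref{easy properties}, and $\subseteq$ by transferring every relation occurring in $A_2$ back to $A_1$ (or vice versa) using $\leq$-preservation, then collapsing via transitivity. Your ``chain reduction principle'' is simply a more explicit formulation of the paper's one-line argument, and in part (ii) you spell out the contradiction (a nontrivial jump would produce a point of $J_i(x^i,y^i)\cap A_i$) more carefully than the paper, which just asserts that the chain must stay inside $X_i$ — but the underlying reasoning is the same.
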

\begin{proof}
One inclusion always holds by Corollary \ref{easy properties}: if $q^i \in J_i(x^i,y^i)$, i.e., $x^i \leq_1 q^i \leq_i y^i$, then $[x] \tleq [q] \tleq [y]$, hence $[q] \in J_X([x],[y])$. We show the other inclusion separately. \\

(i) Let $[q] \in J_X([x],[y])$ and assume without loss of generality $q^1 \in [q]$.
Then $[x] \tleq [q]$, i.e., there exists a chain such that $x^1 \leq_1 a_1^1 \sim a_1^2 \leq_2 a_2^2 \sim a_2^1 \leq_1 a_3^1 \sim \ldots \sim a_n^1 \leq_1 q^1$. Since $f_1$ and $f_2$ are $\leq$-preserving, we can transfer each relation in $X_2$ to a corresponding relation in $X_1$ and ultimately obtain $x^1 \leq_1 q^1$. Analogously, we obtain $q^1 \leq_1 y^1$ and the claim follows. \\

(ii) Let $[q] \in J_X([x],[y])$. Note that $[x]=\{x^i\}$ and $[y]=\{y^i\}$. Since $J_i(x^i,y^i) \cap A_i = \emptyset$ and $f_1$ and $f_2$ are $\leq$-preserving, any chain starting at $x^i$ and ending at $y^i$ must necessarily only contain trivial relations and so has to stay inside $X_i$. Thus, $[q] = \{q^i\}, x^i \leq_i q^i$ and $q^i \leq_i y^i$.
\end{proof}

\section{Gluing preparations}
In this section, we revisit the Lorentzian version of Alexandrov's lemma and show a gluing lemma for timelike triangles, which is essential for the proof of the gluing theorem. At first, we recall some terminology from \cite{AB08}, which is of great use to us. For a more detailed analysis of the contents therein, see \cite{Kir18}. \\

Our approach for proving the gluing theorem in the Lorentzian setting is in spirit very close to the metric version, which introduces a so-called gluing lemma for triangles, cf. \cite[Lemma II.4.10]{BH99}, whose proof relies on Alexandrov's lemma. While \cite[Lemma 2.4]{AB08} certainly is a very powerful formulation of Alexandrov's Lemma valid in any semi-Riemannian manifold, this is not ideal for our situation since it relies too much on the differential structure present in the model spaces. 
\subsection{Signed distance and other techniques}
In this first subsection, we introduce some very general concepts from \cite{AB08} and collect some useful facts concerning angles.
\begin{defin}[Signed length and signed distance]
Let $(M,g)$ be a spacetime and let $p \in M$.
For $v \in T_pM$, we  denote the ``norm'' of $v$ by $|v|:=\sqrt{|g_p(v,v)|}$. 
We then define the signed length of $v$ as $|v|_{\pm}:=\sgn(v)\sqrt{|g_p( v,v)|}=\sgn(v)|v|$, where 
\begin{equation}
\sgn(v):= \begin{cases} 1 & g_p(v,v)  \geq 0, \\
-1 & g_p(v,v) <0. \end{cases}
\end{equation}
If $p,q \in M$ are contained in a normal\footnote{We follow the notation of \cite{AB08} where a normal neighbourhood is a (diffeomorphic) exponential image of an open set of the tangent space. More commonly, such neighbourhoods may also be called geodesically convex neighbourhoods or convex normal neighbourhoods.} neighbourhood $U$, then there exists a unique geodesic $\gamma_{pq}: [0,1] \rar M$ connecting $p$ and $q$ contained in $U$. We define the signed distance of $p$ and $q$ as $|pq|_{\pm}:=|\gamma_{pq}'(0)|_{\pm}$.
Note that if $p \leq q$ and $M$ is strongly causal, then 
\begin{equation}
\label{taurep}
\tau(p,q)=-|pq|_{\pm}=\sqrt{-g_p(\gamma_{pq}'(0), \gamma_{pq}'(0))}.
\end{equation}
\end{defin}
\begin{defin}[Hyperbolic and nonnormalized angle]
Let $M$ be a spacetime and let $p \in M$. Let $q,r \in I^{\pm}(p)$ be such that $\gamma_{pq}'(0)=:v$ and $\gamma_{pr}'(0)=:w$ exist. The hyperbolic angle between $q$ and $r$ at $p$ is defined as
\begin{equation}
\mad_p(q,r):=\arcosh \left( \frac{|g_p( v,w)|}{|v||w|}\right) .
\end{equation}
Let now $q,r$ be any points in a normal neighbourhood of $p$. The nonnormalized angle between $q$ and $r$ at $p$ is defined as $\angle qpr := g_p( v,w )$.
\end{defin}
These two notions of angles are closely related (if the points are timelike related). Keeping the above terminology, one immediately sees that
\begin{equation}
\mad_p(q,r)=\arcosh \left( \frac{ |\angle pqr|}{|v||w|}\right).
\end{equation}
Note that the nonnormalized angle is much more general in the sense that the points need not be timelike related at all.
However, if both angles exist, the nonnormalized angle in some way better captures what ``type'' of angle we are dealing with, i.e., if the tangent vectors lie in the same timecone or not (recall that for timelike vectors $v$ and $w$, $g(v,w) <0$ if both are future or past directed and $g(v,w) >0$ if they have different time orientation).
Observe that the nonnormalized angle is not scale invariant.
\begin{rem}[Implicit inequalities on angles]
\label{angle inequality}
With this in mind, we want to touch on an implication of inequalities of angles. Keeping the above notation, if say $\mad_p(q,r) \leq \mad_{p'}(q',r'), |v|=|v'|,|w|=|w'|$ and both tangent vectors point in the same direction, then $\angle qpr \geq \angle q'p'r'$ (and vice versa). 
If the vectors lie in different timecones, then the inequality of the nonnormalized angles reverses. 
Note that $g_p( v,w)$ and $g_p( v',w' )$ must have the same sign in order to infer some inequality. 
\end{rem}
One big advantage of signed distance and the nonnormalized angle is the very powerful hinge lemma, of which we will make extensive use. For a proof of the following statement see \cite[Lemma 2.2]{AB08}. 
\begin{lem}[Hinge lemma]
\label{hinge}
Let $(|pq|_{\pm},|qr|_{\pm},|pr|_{\pm}) \in \R^3 \setminus \{0\}$ be a triple realizable as the sidelengths of a triangle in $M_K$. If we vary the length of the third side
$|pr|_{\pm}$ while keeping $|pq|_{\pm}$ and $|qr|_{\pm}$ fixed, then:
\begin{itemize}
\item[(i)]The nonnormalized angle $\angle pqr$ is a decreasing function of $|pr|_{\pm}$.
\item[(ii)]The nonnormalized angles $\angle qpr$ and $\angle qrp$ are increasing functions of $|pr|_{\pm}$.
\qed
\end{itemize}
\end{lem}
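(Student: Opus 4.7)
The plan is to derive an explicit law of cosines in the model space $M_K$ expressing each nonnormalized angle of a triangle as a function of the three signed sidelengths, and then to read off the monotonicity claimed in (i) and (ii) directly.

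First I would reduce to dimension two, since any three points of $M_K$ span a totally geodesic copy of $M_K^2$. In the flat case $K=0$, treating $M_0^2$ as the affine Minkowski plane one has $\gamma_{qp}'(0) = p - q$ etc., and expanding $g(r-p,r-p) = g\bigl((r-q)-(p-q),(r-q)-(p-q)\bigr)$ immediately yields
\begin{equation}
\angle pqr \;=\; \tfrac{1}{2}\bigl(g(pq,pq) + g(qr,qr) - g(pr,pr)\bigr),
\end{equation}
together with the symmetric identities at $p$ and $r$. The crux is that $g(v,v) = \sgn(v)\,|v|_\pm^2$, viewed as a function of $t := |v|_\pm \in \R$, equals $t|t|$, which is a $C^1$ strictly increasing function of $t$ across all causal types (timelike $t<0$, null $t=0$, spacelike $t>0$). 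As $|pr|_\pm$ increases, $g(pr,pr)$ strictly increases, and (i) and (ii) for $K=0$ follow by inspection.

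For $K \neq 0$ I would use the standard hyperquadric realization of $M_K^2$ inside a flat three-dimensional ambient space of appropriate signature, with $p,q,r$ corresponding to ambient vectors of squared ambient norm $1/K$. Ambient inner products $\langle p, q\rangle$, etc., are explicit trigonometric or hyperbolic functions of the signed sidelengths; the tangent vectors $\gamma_{pq}'(0)$ can likewise be written as explicit linear combinations of $p$ and $q$ in the ambient space, with coefficients depending on $|pq|_\pm$. Substituting into $\angle qpr = g_p(\gamma_{pq}'(0), \gamma_{pr}'(0))$ and simplifying produces a law of cosines of the schematic form
\begin{equation*}
\angle pqr \;=\; \alpha(a,b) \;+\; \beta(a,b)\,\phi_K\!\bigl(|pr|_\pm\bigr)
\end{equation*}
at the vertex $q$ (with $a = |pq|_\pm$ and $b = |qr|_\pm$ fixed and $\beta(a,b)<0$), together with analogous formulas having positive $\beta$ at $p$ and $r$. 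Monotonicity of the three angles in $|pr|_\pm$ then reduces to monotonicity of $\phi_K$.

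The main obstacle is verifying this monotonicity of $\phi_K$ uniformly across causal types: for spacelike sides $\phi_K$ behaves like a cosine and for timelike sides like a hyperbolic cosine (both scaled by $\sqrt{|K|}$), and one must check that the signed-length convention glues these into a globally monotone function of $|pr|_\pm \in \R$, smoothly through the null case. Once this is pinned down, the remainder is bookkeeping in the ambient space. The conceptual content is precisely that signed lengths and nonnormalized angles are the invariants making Alexandrov-style monotonicity hold uniformly across all causal types, which is what lets the argument above proceed in a case-free manner.
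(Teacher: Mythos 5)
The paper gives no proof of this lemma at all; it simply refers the reader to Lemma~2.2 of \cite{AB08}, so there is no in-paper argument to compare against. Your law-of-cosines strategy is in the spirit of that reference, and your $K=0$ computation is correct: $\angle pqr = \tfrac{1}{2}\bigl(g(pq,pq)+g(qr,qr)-g(pr,pr)\bigr)$ and its cyclic variants, together with the fact that $t\mapsto t|t|$ is strictly increasing, give (i) and (ii) immediately. Part (i) also survives the passage to $K\neq 0$: on the hyperquadric model one computes $\angle pqr = \frac{ab}{s(a)s(b)}\bigl(\phi_K(c)-\phi_K(a)\phi_K(b)\bigr)$, where $a,b,c$ are the signed sidelengths, $s$ is the appropriate $\sinh$/$\sin$ function and $\phi_K=s'$ glues the $\cosh$ and $\cos$ branches into a $C^1$ monotone function; since $c$ appears only inside $\phi_K$ and the prefactor is sign-definite, (i) follows.

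The genuine gap is in (ii) for $K\neq 0$. At the vertices $p$ and $r$ the varied side $c=|pr|_{\pm}$ is an \emph{adjacent} leg, and $\gamma_{pr}'(0)=\frac{c}{s(c)}\bigl(r-\phi_K(c)\,p\bigr)$ depends on $c$ both through the ambient direction and through the scale. The law of cosines at $p$ is therefore $\angle qpr = \frac{ac}{s(a)s(c)}\bigl(\phi_K(b)-\phi_K(a)\phi_K(c)\bigr)$, in which $c$ enters through the prefactor $c/s(c)$ as well as inside $\phi_K$. This is \emph{not} of your schematic form $\alpha(a,b)+\beta(a,b)\,\phi_K(c)$, so the monotonicity in $c$ does not reduce to monotonicity of $\phi_K$: one has to differentiate the full expression and control its sign, using the realizability constraint on $(a,b,c)$, and that is precisely the part you dismiss as bookkeeping. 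Conversely, the obstacle you flag---gluing the $\cos/\cosh$ branches of $\phi_K$ monotonically through the null case---is comparatively routine, since $\phi_K$ is $C^1$ and monotone by inspection.
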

The following lemma will be essential for one case in the gluing lemma, cf. \cite[Lemma 5.1.1]{Kir18}.
\begin{lem}[Bounding function via derivative]
\label{help}
For $k \in \R$, let $f:[0,L] \rar \R$ be a smooth function such that $f'' + kf \leq 0, f(0)=0$ and $f(L)=0$. If $k>0$, assume additionally that $L<\frac{\pi}{\sqrt{k}}$. Then $f(t)\geq 0$ for all $t \in [0,L]$.
\qed
\end{lem}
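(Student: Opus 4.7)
The plan is a Sturm-style comparison with an explicit solution of the limiting ODE $g'' + kg = 0$. Let $\phi_k$ denote the ``sine-like'' function: $\phi_k(t) := \frac{1}{\sqrt{k}}\sin(\sqrt{k}\,t)$ for $k > 0$, $\phi_k(t) := t$ for $k = 0$, and $\phi_k(t) := \frac{1}{\sqrt{-k}}\sinh(\sqrt{-k}\,t)$ for $k < 0$. In each case $\phi_k'' + k\phi_k = 0$, $\phi_k(0) = 0$, $\phi_k'(0) = 1$, and $\phi_k > 0$ on $(0, L]$. The extra assumption $L < \pi/\sqrt{k}$ when $k > 0$ is used precisely to guarantee this positivity; for $k \leq 0$ no such restriction is needed since $\phi_k$ is monotonically increasing.

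Setting $u := -f$, the hypotheses become $u'' + ku \geq 0$ with $u(0) = u(L) = 0$, and the goal reduces to showing $u \leq 0$ on $[0,L]$. To this end I consider the Wronskian
\begin{equation}
W(t) := u'(t)\phi_k(t) - u(t)\phi_k'(t),
\end{equation}
whose derivative is $W'(t) = (u''+ku)\phi_k - u(\phi_k''+k\phi_k) = (u''+ku)\phi_k \geq 0$, since $\phi_k \geq 0$ on $[0,L]$ and $u''+ku \geq 0$ by assumption. Since $W(0) = u'(0)\phi_k(0) - u(0)\phi_k'(0) = 0$, integrating yields $W \geq 0$ on all of $[0, L]$.

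On $(0, L]$ the quotient $u/\phi_k$ is well-defined, and a direct computation gives $(u/\phi_k)' = W/\phi_k^2 \geq 0$, so $u/\phi_k$ is non-decreasing on $(0, L]$. Because $u(L) = 0$ and $\phi_k(L) > 0$, its value at $t = L$ is $0$. A non-decreasing function which ends at $0$ must be $\leq 0$ throughout, so $u/\phi_k \leq 0$ on $(0, L]$, and hence $u \leq 0$, i.e., $f \geq 0$ on $(0, L]$. Combined with $f(0) = 0$ this gives $f \geq 0$ on $[0, L]$.

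The main obstacle is the case $k > 0$: the Wronskian quotient $u/\phi_k$ would fail whenever $\phi_k$ vanishes in the interior, and this is exactly where the hypothesis $L < \pi/\sqrt{k}$ is indispensable — it confines the argument to the first positive arch of $\sin(\sqrt{k}\,t)$, on which no interior zero of the comparison function occurs. Once this is secured the three cases run in parallel and the argument is essentially a single Sturm-Picone comparison.
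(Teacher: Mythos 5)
Your proof is correct. The paper itself does not supply an argument for Lemma~\ref{help}; it closes the statement with an immediate \qed and refers to \cite[Lemma 5.1.1]{Kir18}, so there is no in-text proof to compare against. Your Sturm--Picone argument is the standard and self-contained way to establish the claim: the comparison function $\phi_k$ is chosen exactly so that $L<\pi/\sqrt{k}$ guarantees positivity of $\phi_k$ on $(0,L]$, the Wronskian $W=u'\phi_k-u\phi_k'$ has $W(0)=0$ and $W'=(u''+ku)\phi_k\geq 0$, and monotonicity of $u/\phi_k$ together with the boundary value $u(L)/\phi_k(L)=0$ forces $u\leq 0$. One small point worth remarking (not a gap, since you only use the ratio on $(0,L]$): the quotient $u/\phi_k$ actually extends continuously to $t=0$ with value $u'(0)$ because $\phi_k'(0)=1\neq 0$, so the monotonicity in fact holds on all of $[0,L]$.
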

We note that, on the one hand, by replacing $f$ with $-f$, we can make a similar statement with reversed inequalities. That is, if $f''+kf \geq 0$, then $f \leq 0$. On the other hand, Lemma \ref{help} holds as well if we only assume $f(0) \geq 0$ and $f(L)\geq 0$. Indeed, suppose indirectly that this is not the case, then there exists $t_0 \in (0,L)$ such that $f(t_0)<0$. 
By the mean value theorem, there must exist $t_1 \in (0,t_0), t_2 \in (t_0,L)$ such that $f(t_1)=f(t_2)=0$. Then simply apply Lemma \ref{help} to $[t_1,t_2] \subseteq [0,L]$ to obtain the desired contradiction.
\begin{rem}[Dealing with hyperbolic angles]
\label{between}
Before proceeding to the following results, we want to mention some useful properties of the hyperbolic angle. Contrary to the nonnormalized angle, the hyperbolic angle is independent of the length of its sides (when considering it as a hinge).
Suppose in $M_k$ (or in fact, any two-dimensional spacetime) we have three geodesics emanating from a point $p$ that all go into the same time direction, say the future. Then we can definitively speak of a ``middle segment'', say $[p,y]$ lies between\footnote{The segment $[p,y]$ is said to lie between $[p,x]$ and $[p,z]$ if the following holds: consider the three tangent vectors corresponding to the three segments. These have the same time orientation. Extend them to rays and denote these by $R_x, R_y$ and $R_z$. Then for all $v \in R_x, w \in R_z$ the connecting segment $[v,w]$ intersects the ray $R_y$.} $[p,x]$ and $[p,z]$. Then the triangle equality for hyperbolic angles holds:
\begin{equation}
\mad_p(x,y)+\mad_p(y,z)=\mad_p(x,z).
\end{equation}
This is immediate when viewing the hyperbolic angle as the area under a hyperbolic segment.
\end{rem}
\begin{rem}[Hinge behaviour]
\label{hingebehaviour}
Furthermore, we want to highlight a fact valid in any spacetime: consider a timelike triangle $\Delta(q,r,p)$ with $a=\tau(q,p), b= \tau(r,p), c=\tau(q,r)$ and $\omega = \mad_p(q,r)$. Moving the point $q$ further into the past along the geodesic extending $[q,p]$ causes the distance from $r$ to $q$ to increase as well. More precisely, if $q' \ll q$ is such that $[q,p] \subseteq [q',p]$, then $c':=\tau(q',r) \geq \tau(q,r)=c$. This is a simple consequence of the reverse triangle inequality. 
We want to reformulate this as follows, so that it can be applied similarly to the hinge lemma: let $\alpha$ and $\beta$ be the (past-oriented) geodesics extending the segments $[q,p]$ and $[r,p]$, respectively. Assume $\alpha(1)=q$. Consider the hinge $(\alpha,\beta)$ and denote the included hyperbolic angle by $\omega$. Then $\tau(\alpha(t),r)$ is an increasing function\footnote{Note that this of course only makes sense if the geodesic $\alpha$ can actually be extended. Since we are anyways only interested in applying this in model spaces, this is not problematic.} in $t$ for all $t \geq 1$. Intuitively, one should think of the behaviour illustrated in Figure \ref{hinge pic}: increasing the longest side in a timelike triangle while keeping one of the short sides fixed (and hence also the included hyperbolic angle) causes the other short side to increase as well.
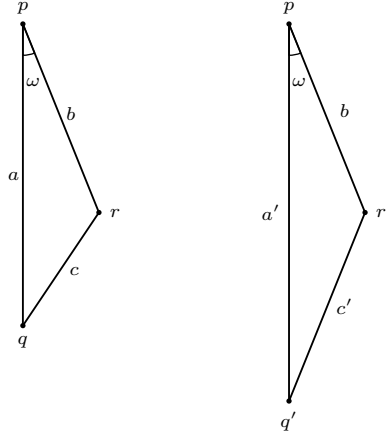
\begin{figure}
\begin{center}
\begin{tikzpicture}[line cap=round,line join=round,>=triangle 45,x=1cm,y=1cm]
\clip(-1.965667650999136,-1.4687684881236897) rectangle (9.582698785866823,4.438312101060029);
\draw [shift={(0,4)},line width=0.5pt] (0,0) -- (-90:0.41518695223973838) arc (-90:-68.19859051364818:0.41518695223973838) -- cycle;
\draw [shift={(3.5,4)},line width=0.5pt] (0,0) -- (-90:0.41518695223973838) arc (-90:-68.19859051364818:0.41518695223973838) -- cycle;
\draw [line width=0.7pt] (0,0)-- (0,4);
\draw [line width=0.7pt] (0,4)-- (1,1.5);
\draw [line width=0.7pt] (0,0)-- (1,1.5);
\draw [line width=0.7pt] (3.5,4)-- (3.5,-1);
\draw [line width=0.7pt] (3.5,-1)-- (4.5,1.5);
\draw [line width=0.7pt] (4.5,1.5)-- (3.5,4);
\begin{scriptsize}
\coordinate [circle, fill=black, inner sep=0.7pt, label=270: {$q$}] (A1) at (0,0);

\coordinate [circle, fill=black, inner sep=0.7pt, label=90: {$p$}] (A1) at (0,4);

\draw[color=black] (-0.1235411665134122,1.9866644061716197) node {$a$};
\draw[color=black] (0.6309215726937071,2.8115477230906163) node {$b$};
\draw[color=black] (0.6737486742857158,0.7244445752210946) node {$c$};

\coordinate [circle, fill=black, inner sep=0.7pt, label=0: {$r$}] (A1) at (1,1.5);

\coordinate [circle, fill=black, inner sep=0.7pt, label=90: {$p$}] (A1) at (3.5,4);

\coordinate [circle, fill=black, inner sep=0.7pt, label=270: {$q'$}] (A1) at (3.5,-1);

\coordinate [circle, fill=black, inner sep=0.7pt, label=0: {$r$}] (A1) at (4.5,1.5);

\draw[color=black] (3.2637958661383847,1.5060802128362043) node {$a'$};
\draw[color=black] (4.231295995793451,0.25386038188567926) node {$c'$};
\draw[color=black] (4.231716573505329,2.854585113538564) node {$b$};
\draw[color=black] (0.1348467917978117,3.230519874737255) node {$\omega$};
\draw[color=black] (3.6348467917978117,3.230519874737255) node {$\omega$};
\end{scriptsize}
\end{tikzpicture}
\end{center}
\caption{Increasing $a$ causes $c$ to increase as well.}
\label{hinge pic}
\end{figure}
\end{rem}
\subsection{A visual Lorentzian version of Alexandrov's lemma}
As in the metric case, the gluing lemma heavily relies on Alexandrov's lemma. \cite{AB08} gives an exceptionally general version of Alexandrov's lemma valid in semi-Riemannian comparison theory. The following lemma may in some sense be regarded as a special case of \cite[Lemma 2.4]{AB08} but we still feel justified to give a full statement here. On the one hand, our approach is much more visual and hence more in the spirit of the original (Euclidean) version of the lemma. On the other hand, we have to avoid assumptions on the behaviour of nonnormalized angles in the comparison situation since the triangles are originally from Lorentzian pre-length spaces, where currently the concept of (hyperbolic) angles is not fully developed. 
\begin{lem}[Alexandrov's lemma, Lorentzian version]
\label{alexlem}
Let $\Delta(x,p,y)$ and $\Delta(p,y,z)$ be two triangles in $M_K$ arranged in a way such that $x$ and $z$ lie on opposite sides of the geodesic line extending $[p,y]$.
Suppose $p,y \in I(x,z)$ and $\tau(x,y)+\tau(y,z) < \tau(x,p)+\tau(p,z)$. 
Let $\Delta(x',y',z')$ be a (timelike) triangle in $M_K$ such that $\tau(x',y')=\tau(x,y), \tau(y',z')=\tau(y,z)$ and $\tau(x',z')=\tau(x,p)+\tau(p,z)$. In particular, we have to assume that $\Delta(x,p,y)$ and $\Delta(p,y,z)$ are small enough such that the size bounds for $\Delta(x',y',z')$ are satisfied as well.
Then $\angle xyz \leq \angle x'y'z'$. 
Moreover, if $[x,z] \cap [p,y] = \emptyset$ then $\angle pzy \geq \angle p'z'y', \angle pxy \geq \angle p'x'y'$ and $|py|_{\pm} \leq |p'y'|_{\pm}$.
And if $[x,z] \cap [p,y] \neq \emptyset$ then $\angle pzy \leq \angle p'z'y', \angle pxy \leq \angle p'x'y'$ and $|py|_{\pm} \geq |p'y'|_{\pm}$ with equalities everywhere if and only if one of the inequalities is an equality, which happens if and only if $[x,z] \cap [p,y] = \{p\}$. 
\end{lem}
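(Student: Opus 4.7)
The plan is to reduce the proof to applications of the hinge lemma (Lemma \ref{hinge}), supplemented by the additivity of hyperbolic angles in the $2$-plane of $M_K$ (Remark \ref{between}). Since $y \in I(x,z)$ gives $x \ll y \ll z$, the triangle $\Delta(x,y,z)$ is a genuine timelike triangle in $M_K$. The reverse triangle inequality applied to $x \ll p \ll z$ yields $\tau(x,z) \geq \tau(x,p) + \tau(p,z) = \tau(x',z')$, so $|xz|_\pm \leq |x'z'|_\pm$. The triangles $\Delta(x,y,z)$ and $\Delta(x',y',z')$ share the sides of signed length $|xy|_\pm$ and $|yz|_\pm$ by hypothesis and differ only in the third. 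An application of Lemma \ref{hinge}(i) at the vertex $y$ then directly yields the first asserted comparison of the nonnormalized angles $\angle xyz$ and $\angle x'y'z'$.

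For the additional claims, introduce the ``corresponding'' point $p' \in [x',z']$ with $\tau(x',p') = \tau(x,p)$ (so $\tau(p',z') = \tau(p,z)$) and compare the sub-triangles $\Delta(x,y,p)$ with $\Delta(x',y',p')$, which share the two sides $|xy|_\pm = |x'y'|_\pm$ and $|xp|_\pm = |x'p'|_\pm$. The crucial feature of the comparison is that $p' \in [x',z']$, so the tangent vectors $\gamma_{x'p'}'(0)$ and $\gamma_{x'z'}'(0)$ at $x'$ are positively proportional, giving $\mad_{x'}(y',p') = \mad_{x'}(y',z')$ and $\mad_{x'}(p',z') = 0$. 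In the original, working in the $2$-plane of $M_K$, the case distinction $[x,z] \cap [p,y] = \emptyset$ versus $\neq \emptyset$ is precisely whether $p$ and $y$ lie on the same or opposite sides of the line extending $[x,z]$; by the triangle equality for hyperbolic angles (Remark \ref{between}) this yields $\mad_x(y,p) = |\mad_x(y,z) - \mad_x(p,z)|$ in the first case and $\mad_x(y,p) = \mad_x(y,z) + \mad_x(p,z)$ in the second.

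Combining these identities with the monotonicity $\mad_x(y,z) \geq \mad_{x'}(y',z')$ (which follows from the Lorentzian law of cosines, whose formula $\cosh(\mad_x(y,z)) = (\tau(x,y)^2 + \tau(x,z)^2 - \tau(y,z)^2)/(2\tau(x,y)\tau(x,z))$ in $M_0$, and its $M_K$ analogues, is monotone in $\tau(x,z)$ for fixed $\tau(x,y), \tau(y,z)$), one derives the claimed inequalities between $\mad_x(y,p)$ and $\mad_{x'}(y',p')$. Since the relevant tangent vectors at $x$ and $x'$ have equal norms (namely $\tau(x,y)$ and $\tau(x,p)$), Remark \ref{angle inequality} then translates this into the nonnormalized angle inequality $\angle pxy$ versus $\angle p'x'y'$; the symmetric argument at $z$ yields the comparison of $\angle pzy$ and $\angle p'z'y'$. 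A final application of the hinge lemma to $\Delta(x,y,p)$ versus $\Delta(x',y',p')$ (the third side being monotone in the just-compared angle at $x$) converts this into the length comparison $|py|_\pm$ versus $|p'y'|_\pm$. The equality case $[x,z] \cap [p,y] = \{p\}$ occurs exactly when $p$ lies on the timelike geodesic from $x$ to $z$, whence $\tau(x,z) = \tau(x,p) + \tau(p,z)$, the reverse triangle inequality collapses to equality, and every subsequent monotone step becomes an equality as well, giving congruent configurations.

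The main technical obstacle is the $\emptyset$ case: in the $\neq \emptyset$ case the bound $\mad_x(y,p) = \mad_x(y,z) + \mad_x(p,z) \geq \mad_x(y,z) \geq \mad_{x'}(y',z') = \mad_{x'}(y',p')$ is immediate, whereas in the $\emptyset$ case one must verify $|\mad_x(y,z) - \mad_x(p,z)| \leq \mad_{x'}(y',z')$. This step requires an essential use of the sum hypothesis $\tau(x,y) + \tau(y,z) < \tau(x,p) + \tau(p,z)$ to guarantee that $\mad_x(p,z)$ is large enough to absorb the gap $\mad_x(y,z) - \mad_{x'}(y',z')$, and is most naturally established by a direct $M_K$ computation via the Lorentzian law of cosines. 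The translation between hyperbolic and nonnormalized angles via Remark \ref{angle inequality} must also be tracked carefully, since the direction of the induced inequality depends on whether the two tangent vectors lie in the same or in opposite timecones.
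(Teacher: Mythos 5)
Your route is genuinely different from the paper's. Where the paper introduces the auxiliary reflected point $\tilde{x}$ on the extension of $[z,p]$ beyond $p$, proves $\angle xpy \leq \angle \tilde{x}py$ by the half-space argument for the spacelike vector $\gamma_{p\tilde{x}}'(0) - \gamma_{px}'(0)$, and then transfers this through the intermediate triangle $\Delta(\tilde{x},y,z)$ with further hinge-lemma applications, you try to obtain the angle comparison at $x$ (and symmetrically at $z$) directly from hyperbolic-angle additivity, reducing $\angle pxy \geq \angle p'x'y'$ to the single inequality $\mad_x(y,p) = \mad_x(y,z) - \mad_x(p,z) \leq \mad_{x'}(y',z') = \mad_{x'}(y',p')$. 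Your treatment of the $[x,z] \cap [p,y] \neq \emptyset$ case is correct and short, and your final step, converting the angle comparison at $x$ into $|py|_{\pm} \leq |p'y'|_{\pm}$ via the hinge lemma on $\Delta(x,p,y)$ versus $\Delta(x',p',y')$, matches the paper's last step.

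However, there is a genuine gap, and it sits precisely at the crux. The inequality $\mad_x(y,z) - \mad_x(p,z) \leq \mad_{x'}(y',z')$ is the entire content of the $[x,z]\cap[p,y]=\emptyset$ case, and you never prove it. You correctly flag it as ``the main technical obstacle'', but the only justification offered is that it ``is most naturally established by a direct $M_K$ computation via the Lorentzian law of cosines'' --- and that computation is not carried out. Unwinding the law of cosines here does not give anything transparent; one obtains a polynomial inequality in the six side lengths whose validity is no more evident than the lemma itself. The paper's reflection-and-half-space argument at $p$ is precisely the geometric device that replaces this opaque computation, and it cannot simply be waved at. Two further points: (i) you do not reproduce the paper's preliminary step that rules out the ``concave at $y$'' subconfiguration of the $\emptyset$ case (extend $[x,y]$ to hit $[p,z]$ at a point $q$ and contradict the sum hypothesis); without it your absolute-value identity $\mad_x(y,p) = |\mad_x(y,z) - \mad_x(p,z)|$ has an undetermined sign, and you would additionally have to prove $\mad_x(p,z) - \mad_x(y,z) \leq \mad_{x'}(y',z')$, a different claim. (ii) Your intuition that the sum hypothesis enters to ``absorb the gap'' is slightly off: in the paper it enters only at that preliminary ruling-out step, after which the angle inequality follows from the betweenness/reflection geometry rather than from any quantitative use of the sum bound. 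As it stands the proposal has the right skeleton but leaves the step containing the actual mathematical content of the lemma unproven.
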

\begin{proof}
We only show the case $[x,z] \cap [p,y] = \emptyset$, which is also the relevant one for the gluing lemma. The other case is done analogously (see last paragraph). 
To begin with, we want to explain the assumption 
\begin{equation}
\label{triangle existence}
\tau(x,y)+\tau(y,z) < \tau(x,p)+\tau(p,z).
\end{equation}
On the one hand, this gives the reverse triangle inequality in the ``straightened'' big triangle, guaranteeing its (nondegenerate) existence. On the other hand, this ensures that we can actually apply Alexandrov's lemma to the gluing lemma below. 
The condition $[x,z] \cap [p,y] = \emptyset$ serves as an analogue to having an angle greater than $\pi$ at $p$ in the metric case. So from a Euclidean point of view, the quadrilateral is concave. But if you take a look at Figure \ref{impossibley}, there are two possibilities for this quadrilateral to turn out concave. To put it another way: we know that the quadrilateral is concave, but since we cannot explicitly connect this nonempty intersection with a (Euclidean) large angle at $p$, it could happen that this large angle appears at $y$.

\begin{minipage}{0.3\textwidth}
\begin{tikzpicture}[line cap=round,line join=round,>=triangle 45,x=1cm,y=1cm]
\clip(-1.569188167107672,-0.3031949050428182) rectangle (8.913822726890942,5.042060077152448);
\draw [line width=0.7pt] (0.6555114010631212,2.2634400795546528)-- (1.5,0);
\draw [line width=0.7pt] (1.5,0)-- (1.4788493040152697,1.3001720470540192);
\draw [line width=0.7pt] (1.4788493040152697,1.3001720470540192)-- (0.6555114010631212,2.2634400795546528);
\draw [line width=0.7pt] (0.6555114010631212,2.2634400795546528)-- (2.9621538734494184,4.646840905143243);
\draw [line width=0.7pt] (2.9621538734494184,4.646840905143243)-- (1.4788493040152697,1.3001720470540192);
\begin{scriptsize}
\coordinate [circle, fill=black, inner sep=0.7pt, label=0: {$x$}] () at (1.5,0);
\coordinate [circle, fill=black, inner sep=0.7pt, label=0: {$p$}] () at (1.4788493040152697,1.3001720470540192);
\coordinate [circle, fill=black, inner sep=0.7pt, label=180: {$y$}] () at (0.6555114010631212,2.2634400795546528);
\coordinate [circle, fill=black, inner sep=0.7pt, label=0: {$z$}] () at (2.9621538734494184,4.646840905143243);
\end{scriptsize}
\end{tikzpicture}
\end{minipage}
\begin{minipage}{0.3\textwidth}
\begin{tikzpicture}[line cap=round,line join=round,>=triangle 45,x=1cm,y=1cm]
\clip(-1.569188167107672,-0.3031949050428182) rectangle (8.913822726890942,5.042060077152448);
\draw [line width=0.7pt] (1.6655586485868488,2.1065160018670412)-- (1.5,0);
\draw [line width=0.7pt] (1.5,0)-- (2.1881064673624286,1.383289742037438);
\draw [line width=0.7pt] (2.1881064673624286,1.383289742037438)-- (1.6655586485868488,2.1065160018670412);
\draw [line width=0.7pt] (1.6655586485868488,2.1065160018670412)-- (1.1201575873716878,4.567707241404234);
\draw [line width=0.7pt] (1.1201575873716878,4.567707241404234)-- (2.1881064673624286,1.383289742037438);
\begin{scriptsize}
\coordinate [circle, fill=black, inner sep=0.7pt, label=0: {$x$}] () at (1.5,0);
\coordinate [circle, fill=black, inner sep=0.7pt, label=0: {$p$}] () at (2.1881064673624286,1.383289742037438);
\coordinate [circle, fill=black, inner sep=0.7pt, label=180: {$y$}] () at (1.6655586485868488,2.1065160018670412);
\coordinate [circle, fill=black, inner sep=0.7pt, label=180: {$z$}] () at (1.1201575873716878,4.567707241404234);
\end{scriptsize}
\end{tikzpicture}
\end{minipage}
\begin{minipage}{0.3\textwidth}
\begin{tikzpicture}[line cap=round,line join=round,>=triangle 45,x=1cm,y=1cm]
\clip(-1.569188167107672,-0.3031949050428182) rectangle (8.913822726890942,5.042060077152448);
\draw [line width=0.7pt] (1.6655586485868488,2.1065160018670412)-- (1.5,0);
\draw [line width=0.7pt] (1.5,0)-- (2.1881064673624286,1.383289742037438);
\draw [line width=0.7pt] (2.1881064673624286,1.383289742037438)-- (1.6655586485868488,2.1065160018670412);
\draw [line width=0.7pt] (1.6655586485868488,2.1065160018670412)-- (1.1201575873716878,4.567707241404234);
\draw [line width=0.7pt] (1.1201575873716878,4.567707241404234)-- (2.1881064673624286,1.383289742037438);
\draw [line width=0.7pt,dashed] (1.6655586485868488,2.1065160018670412)-- (1.7187190676909596,2.7829123995461593);
\begin{scriptsize}
\coordinate [circle, fill=black, inner sep=0.7pt, label=0: {$x$}] () at (1.5,0);
\coordinate [circle, fill=black, inner sep=0.7pt, label=0: {$p$}] () at (2.1881064673624286,1.383289742037438);
\coordinate [circle, fill=black, inner sep=0.7pt, label=180: {$y$}] () at (1.6655586485868488,2.1065160018670412);
\coordinate [circle, fill=black, inner sep=0.7pt, label=180: {$z$}] () at (1.1201575873716878,4.567707241404234);
\coordinate [circle, fill=black, inner sep=0.7pt, label=0: {$q$}] () at (1.7187190676909596,2.7829123995461593);
\end{scriptsize}
\end{tikzpicture}
\end{minipage}

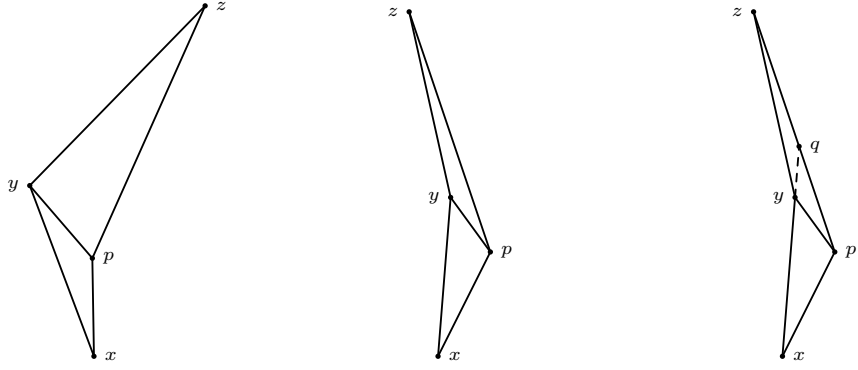
\captionof{figure}{The left and the middle configuration depict the two possible cases of a concave quadrilateral. The figure on the right illustrates how to rule out the middle case.} 
\label{impossibley}
\vspace{7mm}
Suppose we are in this case, then extend the segment $[x,y]$ until it intersects $[p,z]$, say in a point $q$. Then with the reverse triangle inequality and the fact that $y \in [x,q]$ and $q \in [p,z]$, we compute
\begin{align*}
\tau(x,y)+\tau(y,z) & \geq \tau(x,y)+\tau(y,q)+\tau(q,z)=\tau(x,q)+\tau(q,z) \\
& \geq \tau(x,p)+\tau(p,q)+\tau(q,z)=\tau(x,p)+\tau(p,z),
\end{align*}
a contradiction to (\ref{triangle existence}). \\

Turning now to the actual proof, note that by the reverse triangle inequality we have $\tau(x',z')=\tau(x,p)+\tau(p,z) \leq \tau(x,z)$. For the angle at $y$, consider the triangles $\Delta(x,y,z)$ and $\Delta(x',y',z')$. They have two sides of equal length, and since $\tau(x',z') \leq \tau(x,z)$, we have $|xz|_{\pm} \leq |x'z'|_{\pm}$. Thus, $\angle xyz \geq \angle x'y'z'$ by the hinge lemma. \\

For the remaining estimates, we follow the same visual approach as the original version of Alexandrov's lemma.
Let $\tilde{x}$ be the unique point with $\tau(\tx,p)=\tau(x,p)$ such that $\tilde{x}$ lies on the extension of the timelike geodesic segment $[p,z]$, 
see Figure \ref{visual alexlem}.
\begin{figure}
\begin{center}
\begin{tikzpicture}[line cap=round,line join=round,>=triangle 45,x=1cm,y=1cm]
\clip(-2.867209809996504,-0.8591159937864587) rectangle (11.436796309659279,6.719093706391235);
\draw [line width=0.7pt] (0,0)-- (-0.7340132902094361,2.414285714285714);
\draw [line width=0.7pt] (-0.7340132902094361,2.414285714285714)-- (0,1.4);
\draw [line width=0.7pt] (0,1.4)-- (0,0);
\draw [line width=0.7pt] (0,1.4)-- (3.3867606432599557,6.565282921073598);
\draw [line width=0.7pt] (3.3867606432599557,6.565282921073598)-- (-0.7340132902094361,2.414285714285714);
\draw [line width=0.7pt] (3.8850696994038274,3.124568798471209)-- (6,0);
\draw [line width=0.7pt] (3.8850696994038274,3.124568798471209)-- (6.002261946852696,5.300000482679559);
\draw [line width=0.7pt] (6.002261946852696,5.300000482679559)-- (6,0);
\draw [line width=0.7pt,dashed] (3.8850696994038274,3.124568798471209)-- (6.000597495395052,1.4000001275002611);
\draw [samples=50,domain=-0.99:0.99,rotate around={90:(0,1.4)},xshift=0cm,yshift=1.4cm,line width=0.4pt,dash pattern=on 1pt off 1pt] plot ({1.4*(-1-(\x)^2)/(1-(\x)^2)},{1.4*(-2)*(\x)/(1-(\x)^2)});
\draw [line width=0.4pt,dashed] (-0.7340132902094361,2.414285714285714)-- (-1.2157602309138298,-0.4542041255135986);
\draw [shift={(0,1.4)},line width=0.4pt, red]  plot[domain=2.1972355246961035:4.71238898038469,variable=\t]({1*0.3447089581954703*cos(\t r)+0*0.3447089581954703*sin(\t r)},{0*0.3447089581954703*cos(\t r)+1*0.3447089581954703*sin(\t r)});
\draw [shift={(0,1.4)},line width=0.4pt, blue]  plot[domain=2.1972355246961035:4.13203278054244,variable=\t]({1*0.27081028495986365*cos(\t r)+0*0.27081028495986365*sin(\t r)},{0*0.27081028495986365*cos(\t r)+1*0.27081028495986365*sin(\t r)});
\draw [line width=0.4pt,dashed] (0,1.4)-- (-1.2157602309138298,-0.4542041255135986);
\begin{scriptsize}
\coordinate [circle, fill=black, inner sep=0.7pt, label=270: {$x$}] (A1) at (0,0);
\coordinate [circle, fill=black, inner sep=0.7pt, label=0: {$p$}] (A1) at (0,1.4);
\coordinate [circle, fill=black, inner sep=0.7pt, label=180: {$y$}] (A1) at (-0.7340132902094361,2.414285714285714);
\coordinate [circle, fill=black, inner sep=0.7pt, label=270: {$z$}] (A1) at (3.3867606432599557,6.565282921073598);
\coordinate [circle, fill=black, inner sep=0.7pt, label=270: {$x'$}] (A1) at (6,0);
\coordinate [circle, fill=black, inner sep=0.7pt, label=180: {$y'$}] (A1) at (3.8850696994038274,3.124568798471209);
\coordinate [circle, fill=black, inner sep=0.7pt, label=90: {$z'$}] (A1) at (6.002261946852696,5.300000482679559);
\coordinate [circle, fill=black, inner sep=0.7pt, label=0: {$p'$}] (A1) at (6.000597495395052,1.4000001275002611);
\coordinate [circle, fill=black, inner sep=0.7pt, label=180: {$\tilde{x}$}] (A1) at (-1.2157602309138298,-0.4542041255135986);
\end{scriptsize}
\end{tikzpicture}
\end{center}
\caption{A visual approach in the spirit of the original version of Alexandrov's lemma.}
\label{visual alexlem}
\end{figure}
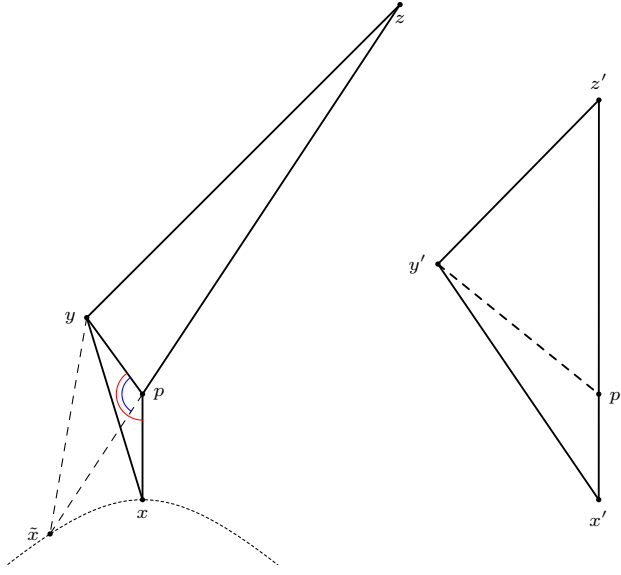
Since $[x,z] \cap [p,y] = \emptyset$, the segment $[\tilde{x},p]$ lies between the segments $[x,p]$ and $[p,y]$ (in the sense of Remark \ref{between}). This already implies $\angle xpy \leq \angle \tilde{x}py$.
To see this, we observe that $\angle xpy \leq \angle \tilde{x}py$ is equivalent to
\begin{equation}
\label{scalar product inequality}
\langle \gamma_{p \tx}'(0) - \gamma_{p x}'(0), \gamma_{p y}'(0) \rangle \geq 0.
\end{equation}
Clearly, $\gamma_{p \tx}'(0) - \gamma_{p x}'(0)$ is a spacelike vector as the difference of two past directed timelike vectors of the same length. Recall that the scalar product with a spacelike vector $v$ is non-negative if and only if the other vector lies in the same half-space as $v$ generated by the normal space of $v$. 
For better visualization, we apply a Lorentz transformation to view $\gamma_{p \tx}'(0) - \gamma_{p x}'(0)$ as a horizontal vector and $p$ as the origin, see Figure \ref{half space}. 
Then the normal space of $\gamma_{p \tx}'(0) - \gamma_{p x}'(0)$ is 
a vertical line through 0. 
From a Euclidean point of view, $\gamma_{p \tx}'(0) - \gamma_{p x}'(0)$ is at an angle of ninety degrees and symmetric with respect to its normal space. 
Since $p \in [\tx,z]$, it follows that $\gamma_{p x}'(0)$ and $\gamma_{p \tx}'(0)$ differ by a Euclidean reflection with respect to the normal space. Together, this implies that $\gamma_{p z}'(0)$ and $\gamma_{p x}'(0)$ lie in the same half-space. \\

Now keep in mind that these directional vectors originally come from the sides of two triangles which together yield a concave situation without self-intersection. In this way, we not only obtain that $\gamma_{py}'(0)$ does not lie between $\gamma_{p x}'(0)$ and $\gamma_{p z}'(0)$, but also that $\gamma_{py}'(0)$ has to lie between the mirrored versions of these vectors, that is between $\gamma_{p \tx}'(0)$ and $-\gamma_{p x}'(0)$. Thus, it is clear that both entries in (\ref{scalar product inequality}) are in the same half-space with respect to the normal space of the spacelike entry, and so the inequality is true.
\begin{figure}
\begin{center}
\begin{tikzpicture}[line cap=round,line join=round,>=triangle 45,x=1cm,y=1cm]
\draw [line width=0.5pt,dotted] (0,0)-- (-1,-2);
\draw [line width=0.7pt] (0,0)-- (1,-2);
\draw [line width=0.7pt] (0,0)-- (1.5,3);
\draw [line width=0.5pt,dotted] (0,0)-- (-1,2);
\draw [line width=0.7pt, color=blue] (1,-2)-- (-1,-2);
\draw [line width=0.7pt] (-0.06346095792292286,-2) -- (0,-1.9238468504924928);
\draw [line width=0.7pt] (-0.06346095792292286,-2) -- (0,-2.0761531495075083);
\draw [line width=0.7pt] (0.06346095792292286,-2) -- (0.12692191584584572,-1.9238468504924928);
\draw [line width=0.7pt] (0.06346095792292286,-2) -- (0.12692191584584572,-2.0761531495075083);
\draw [line width=0.7pt] (-0.1903828737687686,-2) -- (-0.12692191584584572,-1.9238468504924928);
\draw [line width=0.7pt] (-0.1903828737687686,-2) -- (-0.12692191584584572,-2.0761531495075083);
\draw [line width=0.7pt, color=blue] (0,0)-- (-1,0.2);
\draw [line width=0.7pt] (-0.5622285890000532,0.11244571780001074) -- (-0.48506513863998807,0.17467430680006382);
\draw [line width=0.7pt] (-0.5622285890000532,0.11244571780001074) -- (-0.5149348613600131,0.025325693199936217);
\draw [line width=0.7pt] (-0.4377714109999474,0.08755428219999027) -- (-0.3606079606398822,0.14978287120004335);
\draw [line width=0.7pt] (-0.4377714109999474,0.08755428219999027) -- (-0.3904776833599072,0.0004342575999157513);
\draw [line width=0.7pt] (-0.6866857670001587,0.13733715340003216) -- (-0.6095223166400935,0.19956574240008526);
\draw [line width=0.7pt] (-0.6866857670001587,0.13733715340003216) -- (-0.6393920393601185,0.05021712879995765);
\draw [line width=0.7pt,dashed] (-1,0.2)-- (1.5,3);
\draw [line width=0.7pt,dashed] (-1,0.2)-- (1,-2);
\draw [line width=0.7pt,color=red] (0,3.5)-- (0,-2.5);
\begin{scriptsize}
\coordinate [circle, fill=black, inner sep=0.7pt, label=0: {$0$}] (A1) at (0,0);
\coordinate [circle, fill=black, inner sep=0.7pt, label=180: {$\gamma_{p \tx}'(0)$}] (A1) at (-1,-2);
\coordinate [circle, fill=black, inner sep=0.7pt, label=0: {$\gamma_{p x}'(0)$}] (A1) at (1,-2);
\coordinate [circle, fill=black, inner sep=0.7pt, label=0: {$\gamma_{p z}'(0)$}] (A1) at (1.5,3);
\coordinate [circle, fill=black, inner sep=0.7pt, label=180: {$\gamma_{p y}'(0)$}] (A1) at (-1,0.2);
\end{scriptsize}
\end{tikzpicture}
\end{center}
\caption{The two vectors in the scalar product lie in the same half-space.}
\label{half space}
\end{figure}
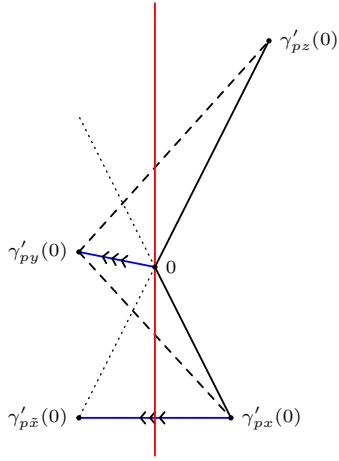
Now consider the triangles $\Delta(x,p,y)$ and $\Delta(\tilde{x},p,y)$. By construction we have $|px|_{\pm}=|p\tilde{x}|_{\pm}$, so they have two sides of equal length since they have the segment $[p,y]$ in common. Moreover, since $\angle \tilde{x}py \geq  \angle xpy$, the hinge lemma gives $|\tilde{x}y|_{\pm} \leq |xy|_{\pm}$.
We continue with the triangles $\Delta(\tilde{x},y,z)$ and $\Delta(x',y',z')$. Again, we have two pairs of equal length since $|yz|_{\pm}=|y'z'|_{\pm}$ and $|x'z'|_{\pm}=|xp|_{\pm}+|pz|_{\pm}=|\tilde{x}p|_{\pm}+|pz|_{\pm}=|\tilde{x}z|_{\pm}$, where the last equality holds since $p$ lies on the segment $[\tilde{x},z]$. For the third side we know $|\tilde{x}y|_{\pm} \leq |xy|_{\pm}=|x'y'|_{\pm}$ by the above considerations. Thus, the hinge lemma implies $\angle \tilde{x}zy \geq \angle x'z'y'$. And since $p$ and $p'$ correspond to each other on the segments $[\tilde{x},z]$ and $[x',z']$, respectively, we also have $\angle pzy \geq \angle p'z'y'$. \\

Analogously, we find a point $\tilde{z}$ that lies on the extension of $[x,p]$ at distance $\tau(p,z)$ from $p$. A similar argument then implies $\angle pxy \geq \angle y'x'z'$. Finally, the triangles $\Delta(x,p,y)$ and $\Delta(x',p',y')$ have two sides with equal length and we know $\angle pxy \geq \angle p'x'y'$. Consequently, we obtain $|py|_{\pm} \leq |p'y'|_{\pm}$ by the hinge lemma. \\

At last, note that if $[x,z] \cap [p,y] = \{p\}$, then $[x,z]$ in fact is composed of the two sides $[x,p]$ and $[p,z]$ in the triangles and hence $\Delta(x,y,z)$ is isometric to $\Delta(x',y',z')$, so equality in all inequalities in the statement follows immediately. Also if $[x,z] \cap [p,y] = \{q\} \neq \{p\}$, then the procedure from above causes $[x,p]$ to be between $[p,y]$ and $[p,\tilde{x}]$. 
Then the same calculation yields $\angle \tx p y \leq \angle xpy$ and consequently all the following inequalities are reversed. Any of the inequalities being an equality forces the others to be equalities as well, which then implies $q=p$. 
\end{proof}
In the statement above we assumed the configuration of triangles to be such that the subdivision happens along the longest side of $\Delta(x',y',z')$. For a proper application we need to guarantee the desired behaviour in the other cases as well. However, this can be done analogously. We give a rough sketch of the proof.
\begin{lem}[Alexandrov's Lemma, other constellations]
\label{alexlem2}
Let $\Delta(x,p,z)$ and $\Delta(p,y,z)$ be two triangles in $M_K$ such that $[x,p], [x,z], [p,y]$ as well as $[y,z]$ are timelike and the triangles 
are arranged on opposite sides of the geodesic line extending the segment $[p,z]$, see Figure \ref{alexlemfig2}. 
Let $\Delta(x',y',z')$ be a timelike triangle such that $\tau(x',y')=\tau(x,p)+\tau(p,y), \tau(y',z')=\tau(y,z)$ and $\tau(x',z')=\tau(x,z)$.
Then $\angle pzy \geq \angle p'z'y'$. 
If $[x,y] \cap [p,z] = \emptyset$, then $\angle pxz \geq \angle p'x'z', \angle pxy \geq \angle p'x'y'$ and $|py|_{\pm} \leq |p'y'|_{\pm}$.
\end{lem}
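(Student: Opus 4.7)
The plan is to mirror the proof of Lemma \ref{alexlem}, adapted to the present ``dual'' configuration where the shared side of the two sub-triangles is $[p,z]$ and the straightened side runs from $x$ through $p$ to $y$. As there, the main interesting situation is the concave one $[x,y] \cap [p,z] = \emptyset$ (to which I restrict attention below); the other configuration proceeds by the same mechanism with the relevant inequalities reversed. The argument rests on introducing a straightening auxiliary point, deriving a non-normalized angle inequality at $p$ via the half-space / scalar-product argument used in Lemma \ref{alexlem}, and then chaining together applications of the hinge lemma (Lemma \ref{hinge}).

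First I would let $\tilde{x}$ be the unique point on the extension of $[y,p]$ past $p$ with $\tau(\tilde{x},p)=\tau(x,p)$, so that $\tilde{x},p,y$ lie on a common timelike geodesic with $p$ between them and $\tau(\tilde{x},y)=\tau(x,p)+\tau(p,y)=\tau(x',y')$. The scalar-product argument from the proof of Lemma \ref{alexlem}, applied in the two-dimensional Lorentzian plane spanned by the relevant tangent vectors at $p$, gives $\angle xpz \leq \angle \tilde{x}pz$. Feeding this into the hinge lemma on $\Delta(x,p,z)$ and $\Delta(\tilde{x},p,z)$ --- which share the side $[p,z]$ and satisfy $|xp|_\pm = |\tilde{x}p|_\pm$ --- yields $|\tilde{x}z|_\pm \leq |xz|_\pm = |x'z'|_\pm$. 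A further comparison of $\Delta(\tilde{x},y,z)$ with $\Delta(x',y',z')$ (two pairs of equal-length sides and the third satisfying the above inequality) then extracts via the hinge lemma the hyperbolic angle estimate $\mad_z(\tilde{x},y) \geq \mad_{z'}(x',y')$.

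To descend to the sub-angle $\angle pzy$, I would invoke the triangle equality for hyperbolic angles (Remark \ref{between}): since $p$ lies on $[\tilde{x},y]$ and from $z$'s perspective the ray to $p$ sits between the rays to $\tilde{x}$ and to $y$ (and analogously for $p'$ in the comparison triangle), each hyperbolic angle splits as $\mad_z(\tilde{x},y)=\mad_z(\tilde{x},p)+\mad_z(p,y)$. The desired $\angle pzy \geq \angle p'z'y'$ then follows by comparing the two summands via an intermediate hinge-lemma comparison of $\Delta(\tilde{x},p,z)$ with $\Delta(x',p',z')$, which controls $\mad_z(\tilde{x},p)$ against $\mad_{z'}(x',p')$.

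For the remaining conclusions in the concave case, a symmetric construction of an auxiliary point $\tilde{z}$ on the extension of $[x,p]$ past $p$ with $\tau(p,\tilde{z})=\tau(p,y)$ carries the same sequence of hinge-lemma applications through to yield $\angle pxz \geq \angle p'x'z'$ and $\angle pxy \geq \angle p'x'y'$, and the signed-distance estimate follows from a last application of the hinge lemma at $x$ using the newly established angle comparison. The main obstacle is verifying the betweenness of the tangent rays at $p$ in this configuration --- where they no longer all lie in a common time cone, forcing a careful reduction to a two-dimensional Lorentzian plane --- and tracking the sign conventions through multiple applications of the hinge lemma as it is stated for non-normalized angles.
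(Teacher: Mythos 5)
Your overall scaffolding is right and matches the paper's approach: introduce a straightening auxiliary point by geodesic extension, establish a nonnormalized-angle inequality at $p$ with the half-space argument of Lemma \ref{alexlem}, then chain hinge-lemma applications. Your $\tilde{z}$ (on the extension of $[x,p]$ past $p$ with $\tau(p,\tilde{z})=\tau(p,y)$) is precisely the paper's auxiliary point $\tilde{y}$, and the derivation of $\angle pxz \geq \angle p'x'z'$ from it — comparing $\Delta(x,\tilde{z},z)$ with $\Delta(x',y',z')$ and then exploiting that $p$ lies on $[x,\tilde{z}]$ at the same parameter as $p'$ on $[x',y']$, so the nonnormalized angles at $x$ just rescale — is the paper's argument.

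The gap is in your proposed route to the first conclusion $\angle pzy \geq \angle p'z'y'$. After the comparison of $\Delta(\tilde{x},y,z)$ with $\Delta(x',y',z')$ you correctly obtain a hyperbolic-angle estimate at $z$, and the splitting $\mad_z(\tilde{x},y)=\mad_z(\tilde{x},p)+\mad_z(p,y)$ is legitimate because $p \in [\tilde{x},y]$. But the ``intermediate hinge-lemma comparison of $\Delta(\tilde{x},p,z)$ with $\Delta(x',p',z')$'' you invoke to control $\mad_z(\tilde{x},p)$ against $\mad_{z'}(x',p')$ is not a valid application of Lemma \ref{hinge}: those two triangles have only one pair of corresponding sides of known equal signed length ($[\tilde{x},p]$ vs.\ $[x',p']$), a second pair with a known inequality ($|\tilde{x}z|_{\pm} \leq |x'z'|_{\pm}$), and the third pair $[p,z]$ vs.\ $[p',z']$ is precisely one of the quantities the lemma is trying to estimate. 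The hinge lemma needs two sides fixed; with both adjacent sides at $z$ unconstrained the angle at $z$ is undetermined. Even granting a bound on $\mad_z(\tilde{x},p)$, passing from hyperbolic to nonnormalized angles to land on $\angle pzy \geq \angle p'z'y'$ requires comparing $\tau(p,z)$ with $\tau(p',z')$, so the argument becomes circular. The structural reason your auxiliary $\tilde{x}$ does not carry through here, unlike in Lemma \ref{alexlem}, is that $p$ lies on $[\tilde{x},y]$ rather than on $[\tilde{x},z]$, so the proportional-scaling trick for the nonnormalized angle at $z$ is unavailable. The paper sidesteps this by treating the (unconditional) angle estimate at $z$ separately via the hinge lemma applied to the outer triangles $\Delta(x,y,z)$ and $\Delta(x',y',z')$ (using $\tau(x',y') \leq \tau(x,y)$), and reserving the auxiliary-point machinery for the conditional conclusions at $x$ and the shared-side length. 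Note also that since $\angle pzy \geq \angle p'z'y'$ is stated in the lemma \emph{before} the hypothesis $[x,y]\cap[p,z]=\emptyset$, your restriction to the concave case leaves the unconditional part of the statement unaddressed.
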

\begin{figure}
\begin{tikzpicture}[line cap=round,line join=round,>=triangle 45,x=1cm,y=1cm]
\draw (7,0)-- (7,0.7);
\draw (7,0.7)-- (8.704211470062866,3.892857142857137);
\draw (8.704211470062866,3.892857142857137)-- (7,0);
\draw (6.5323077403855505,1.5418646267086404)-- (7,0.7);
\draw (6.5323077403855505,1.5418646267086404)-- (8.704211470062866,3.892857142857137);
\draw (9.6883515793861,1.915649434423587)-- (10.99591142719706,0);
\draw (9.6883515793861,1.915649434423587)-- (10.992258307520794,3.500001906468535);
\draw (10.992258307520794,3.500001906468535)-- (10.99591142719706,0);
\draw [dashed] (10.342131503291583,0.9578247172117903)-- (10.992258307520794,3.500001906468535);
\begin{scriptsize}
\coordinate [circle, fill=black, inner sep=0.7pt, label=0: {$x'$}] (A1) at (10.99591142719706,0);

\coordinate [circle, fill=black, inner sep=0.7pt, label=190: {$p'$}] (A1) at (10.342131503291583,0.9578247172117903);

\coordinate [circle, fill=black, inner sep=0.7pt, label=270: {$x$}] (A1) at (7,0);

\coordinate [circle, fill=black, inner sep=0.7pt, label=230: {$p$}] (A1) at (7,0.7);

\coordinate [circle, fill=black, inner sep=0.7pt, label=0: {$z$}] (A1) at (8.704211470062866,3.892857142857137);

\coordinate [circle, fill=black, inner sep=0.7pt, label=180: {$y$}] (A1) at (6.5323077403855505,1.5418646267086404);

\coordinate [circle, fill=black, inner sep=0.7pt, label=180: {$y'$}] (A1) at (9.6883515793861,1.915649434423587);

\coordinate [circle, fill=black, inner sep=0.7pt, label=90: {$z'$}] (A1) at (10.992258307520794,3.500001906468535) circle (2pt);
\end{scriptsize}
\end{tikzpicture}
\caption{The other configuration of Alexandrov's Lemma.}
\label{alexlemfig2}
\end{figure}
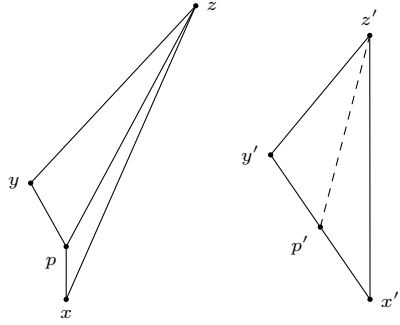
\begin{proof}
Note that since $p \ll y \ll z$, also $[p,z]$ is timelike and so we are in fact dealing with two timelike triangles.
We have $\tau(x',y')=\tau(x,p)+\tau(p,y) \leq \tau(x,y)$, i.e., $|x'y'|_{\pm} \geq |xy|_{\pm}$, hence $\angle pzy \geq \angle p'z'y'$ by the hinge lemma. 
As for the other inequalities, extend the segment $[p,x]$ to obtain a point $\ty$ on this extension such that $\tau(p,y)=\tau(p,\ty)$, cf. Figure \ref{visual alexlem}. Then as in Figure \ref{half space}, we obtain $\angle ypz \leq \angle \ty pz$. Considering the triangles $\Delta(p,\ty,z)$ and $\Delta(p,y,z)$, we infer $|yz|_{\pm} \geq |\ty z|_{\pm}$ from the hinge lemma. Then consider $\Delta(x,\ty,z)$ and $\Delta(x',y',z')$ and obtain $\angle pxz \geq \angle p'x'z'$. We can argue similarly to obtain $\angle pyz \geq \angle p'y'z'$. Finally, $|pz|_{\pm} \leq |p'z'|_{\pm}$ again by the hinge lemma.
\end{proof}
\subsection{The gluing lemma}
We now formulate and prove the gluing lemma for timelike triangles. This really is the main tool in the proof of the gluing theorem, both in the metric case and in the Lorentzian setting. The proof is rather long and quite technical.
\begin{lem}[Gluing lemma for timelike triangles, case I]
\label{gluinglemma}
Let $X$ be a Lorentzian pre-length space and let $U \subseteq X$ be a subset that satisfies $(i)$ and $(ii)$ in the definition for a comparison neighbourhood in $X$, cf. Definition \ref{tl curv bounds}. That is, $\tau|_{U \times U}$ is finite and continuous and for all $x,y \in U$ with $x \ll y$ there is a causal curve contained in $U$ with length $\tau(x,y)$. Let $K \in \R$ and let  $T_3:= \Delta(x,y,z)$ be a timelike triangle in $U$ satisfying size bounds for $M_K$. Let $p \in [x,z]$ such that $p \ll y$ (or $y \ll p$). In other words, $T_1 := \Delta(x,p,y)$ and $T_2 := \Delta(p,y,z)$ are again timelike triangles (if $y \ll p$ then the order of the points changes), see Figure \ref{gluing figure}. Let $\bT_1:=\Delta(\bx,\bp,\by)$ and $\bT_2:=\Delta(\bp,\by,\bz)$ be comparison triangles for $T_1$ and $T_2$ in $M_K$, respectively. Suppose $T_1$ and $T_2$ satisfy timelike curvature bounds from above for $K$, i.e., for all $a,b \in T_i$ and corresponding comparison points $\ba,\bb \in \bT_i, i=1,2$ we have $\tau(a,b) \geq \bar{\tau}(\ba,\bb)$. Then $T_3$ satisfies the same timelike curvature bound from above.
\end{lem}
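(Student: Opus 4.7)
The plan is to follow the classical metric strategy of Reshetnyak's gluing lemma, adapted to the Lorentzian setting, with Alexandrov's Lemma (Lemma \ref{alexlem}) as the central tool. First, I would build the comparison triangle $\bar T_3 = \Delta(\bar x,\bar y,\bar z)$ for $T_3$ in $M_K$ and locate $\bar p \in [\bar x,\bar z]$ with $\bar\tau(\bar x,\bar p) = \tau(x,p)$; this is possible because $p$ lies on the maximizing segment $[x,z]$, so $\tau(x,z) = \tau(x,p) + \tau(p,z)$. Separately, I would glue the given comparison triangles $\bar T_1$ and $\bar T_2$ in $M_K$ along their common edge $[\bar p,\bar y]$, placed on opposite sides of the line extending $[\bar p,\bar y]$, producing a ``broken'' quadrilateral configuration $Q$ whose outer vertices match those of $\bar T_3$.

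Next, I would apply Alexandrov's lemma to $Q$ vs $\bar T_3$. The hypothesis $\tau(x,y) + \tau(y,z) \leq \tau(x,p) + \tau(p,z)$ is the reverse triangle inequality for $\tau$, and the strict case is generic; the equality case can be dealt with separately as a degenerate configuration. The lemma yields the key model-space comparisons $\tau(p,y) \geq \bar\tau_3(\bar p,\bar y)$ together with the angle inequalities at the outer vertices and at $\bar p$.

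Given a pair $a,b$ on the edges of $T_3$, I would prove $\tau(a,b) \geq \bar\tau_3(\bar a,\bar b)$ by case analysis on which edges contain $a$ and $b$. Each edge of $T_3$ is an edge (or a sub-edge) of $T_1$ or $T_2$ with matching length, so if $a,b$ lie on a single edge of $T_3$, the comparison is immediate. If $a,b$ lie in the same sub-triangle $T_i$ but on different edges, the upper bound on $T_i$ gives $\tau(a,b) \geq \bar\tau_i(\bar a_i,\bar b_i)$, and it remains to compare $\bar\tau_i(\bar a_i,\bar b_i)$ with $\bar\tau_3(\bar a,\bar b)$. For this I would exploit that $\bar T_i$ and the sub-triangle $\Delta(\bar x,\bar p,\bar y) \subset \bar T_3$ (or the symmetric one) share two sides while the third, $[\bar p,\bar y]$, is longer in $\bar T_i$ by Step 2; one or two applications of the hinge lemma (Lemma \ref{hinge}) from the shared vertex then deliver the required inequality. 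The genuinely delicate case is when $a \in T_1$ and $b \in T_2$ lie on truly different sides of $[p,y]$. Here I would pick an intermediate point $q \in [p,y]$ causally between $a$ and $b$, apply the reverse triangle inequality $\tau(a,b) \geq \tau(a,q) + \tau(q,b)$ in $X$, and reduce to the previously handled same-sub-triangle case for each piece; a reverse triangle inequality in $M_K$ (combined with the angle comparison at $\bar p$ from Alexandrov) then reassembles the result into $\bar\tau_3(\bar a,\bar b)$.

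The main obstacle I foresee is the bookkeeping of the sub-cases together with the careful sign and timecone conventions that the Lorentzian hinge lemma imposes; in particular, translating the ``broken configuration is concave at $\bar p$'' statement into usable angle inequalities between the two sub-triangles and $\bar T_3$ requires an argument of the same flavour as the half-space computation (Figure \ref{half space}) inside the proof of Alexandrov's lemma itself. Degenerate configurations (equality in the reverse triangle inequality, $a$ or $b$ coinciding with $p$ or $y$, or $[\bar x,\bar z] \cap [\bar p,\bar y]$ being nontrivial) will have to be isolated and treated separately using Lemma \ref{alexlem2} and the equality clauses of the hinge lemma.
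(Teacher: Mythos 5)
Your high-level architecture matches the paper's: build comparison triangles $\bT_1,\bT_2$ sharing $[\bp,\by]$, form the straightened triangle $\bT_3$, invoke Lorentzian Alexandrov, and then do a case analysis on the positions of $a$ and $b$. The cases where $a,b$ lie on two short sides, or in the same sub-triangle, go through roughly as you describe. The equality/degenerate configurations are also dealt with essentially as you suggest (and the paper additionally rules out $[\bp,\by]\cap[\bx,\bz]\neq\{\bp\}$ as incompatible with an upper bound). So far, same route.

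The genuine gap is in the case you call ``genuinely delicate'', and it is larger than you estimate. When $\ba$ and $\bb$ lie in different sub-triangles, the model geodesic $[\ba,\bb]$ need not pass through $[\bp,\by]$ at all --- it can exit the quadrilateral $\bT_1\cup\bT_2$. Your plan is to choose some $q\in[p,y]$ between $a$ and $b$, use $\tau(a,b)\geq\tau(a,q)+\tau(q,b)\geq\btau(\ba,\bq)+\btau(\bq,\bb)$, and then ``reassemble'' via the angle comparison at $\bp$. But if $[\ba,\bb]$ does not cross $[\bp,\by]$, there is no $\bq\in[\bp,\by]$ for which $\btau(\ba,\bq)+\btau(\bq,\bb)$ is controlled from below by $\btau_3(\ba',\bb')$ in any obvious way; the reverse triangle inequality in $M_K$ runs the wrong direction for that purpose, and the angle comparison at $\bp$ by itself does not close the gap. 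What the paper actually proves in this situation is the strictly stronger estimate $\btau(\ba,\bp)+\btau(\bp,\bb)\geq\btau(\ba',\bb')$ (a detour specifically through $\bp$, not an arbitrary $\bq$), and the proof of this inequality is not a consequence of Alexandrov plus hinge-lemma bookkeeping: it is a genuine analytic argument. One parametrizes $\ba$ along $[\bx,\by]$, defines a comparison function $f(t)$ (the squared/$\cosh$/$\cos$ of the two lengths), verifies $f(0)=0$ and $f(m)\geq0$, and shows $f''+Kf\leq0$ so that Lemma~\ref{help} forces $f\geq0$; this has to be done separately in the flat, de Sitter, and anti--de Sitter model spaces, and uses the reverse Cauchy--Schwarz inequality and an orthogonal-decomposition computation in the ambient space. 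Your proposal has no substitute for this step. (A secondary, smaller omission: in the case where $[\ba,\bb]$ does stay inside the quadrilateral but spans both triangles, the paper still needs the Lorentzian law of cosines together with the monotonicity observation of Remark~\ref{hingebehaviour}, because the hinge you want to compare has legs of unequal length; ``one or two applications of the hinge lemma from the shared vertex'' is not quite enough there either.)
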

\begin{figure}
\begin{center}
\begin{tikzpicture}
\draw (0,0)-- (-0.7340132902094361,2.414285714285714);
\draw (-0.7340132902094361,2.414285714285714)-- (0,1.4);
\draw (0,1.4)-- (0,0);
\draw (0,1.4)-- (3.3867606432599557,6.565282921073598);
\draw (3.3867606432599557,6.565282921073598)-- (-0.7340132902094361,2.414285714285714);
\draw (3.8850696994038274,3.124568798471209)-- (6,0);
\draw (3.8850696994038274,3.124568798471209)-- (6.002261946852696,5.300000482679559);
\draw (6.002261946852696,5.300000482679559)-- (6,0);
\draw [dashed] (6.000597495395052,1.4000001275002611) -- (3.8850696994038274,3.124568798471209);

\draw (-4,0) .. controls (-4.5,2) and (-5,3) .. (-5,3.5);
\draw (-5,3.5) .. controls (-4,4) and (-3,5) .. (-2.5,5.5);

\begin{scriptsize}
\draw (-4,0) .. controls (-3,3) and (-2,4) .. (-2.5,5.5) node (A)[circle, fill=black,inner sep=0.7pt,pos=0.15,label=right:$p$]{};;
\draw (A) .. controls (-4.5,2) and (-4,3) .. (-5,3.5);
\coordinate [circle, fill=black, inner sep=0.7pt, label=270: {$x$}] (A1) at (-4,0);
\coordinate [circle, fill=black, inner sep=0.7pt, label=180: {$y$}] (A1) at (-5,3.5);
\coordinate [circle, fill=black, inner sep=0.7pt, label=90: {$z$}] (A1) at (-2.5,5.5);
\coordinate [circle, fill=black, inner sep=0.7pt, label=270: {$\bx$}] (A1) at (0,0);
\coordinate [circle, fill=black, inner sep=0.7pt, label=0: {$\bp$}] (A1) at (0,1.4);
\coordinate [circle, fill=black, inner sep=0.7pt, label=180: {$\by$}] (A1) at (-0.7340132902094361,2.414285714285714);
\coordinate [circle, fill=black, inner sep=0.7pt, label=90: {$\bz$}] (A1) at (3.3867606432599557,6.565282921073598);
\coordinate [circle, fill=black, inner sep=0.7pt, label=270: {$\bx'$}] (A1) at (6,0);
\coordinate [circle, fill=black, inner sep=0.7pt, label=180: {$\by'$}] (A1) at (3.8850696994038274,3.124568798471209);
\coordinate [circle, fill=black, inner sep=0.7pt, label=90: {$\bz'$}] (A1) at (6.002261946852696,5.300000482679559);
\coordinate [circle, fill=black, inner sep=0.7pt, label=0: {$\bp'$}] (A1) at (6.000597495395052,1.4000001275002611);
\end{scriptsize}
\end{tikzpicture}
\end{center}
\caption{A timelike triangle in $X$ subdivided into two timelike triangles, the comparison triangles for the smaller triangles and the comparison triangle for the big triangle.}
\label{gluing figure}
\end{figure}
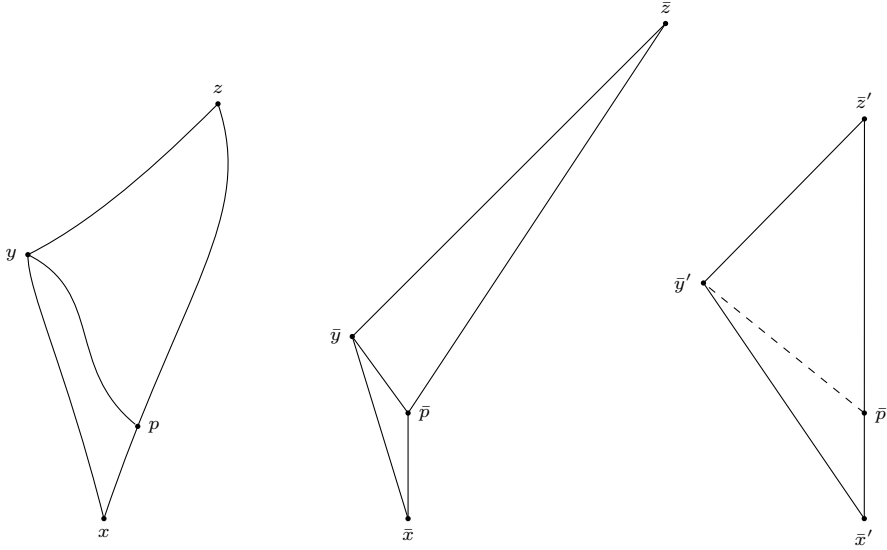
\begin{proof}
Realize the comparison triangles for $T_1$ and $T_2$ in such a way that they share the timelike geodesic segment $[\bp,\by]$ and such that $\bx$ and $\bz$ lie on opposite sides of this segment (as in Alexandrov's lemma). 
Note that because of the size bounds, either $[\bp,\by]$ and $[\bx,\bz]$ intersect in a single point or they do not intersect at all. If $[\bp,\by] \cap [\bx,\bz] =\{p\}$, then as in Lemma \ref{alexlem}, $\bT_1$ and $\bT_2$ together already form a comparison triangle $\bT_3$ for $T_3$ and we are done. The cases of comparing points which are not immediate from this assumption will be covered later in greater generality. Conversely, if $[\bp,\by] \cap [\bx,\bz] =\{q\} \neq \{p\}$, then $\btau(\bx,\bz)=\btau(\bx,\bq)+\btau(\bq,\bz)$ and, since $\bq \in [\bp,\by]$, $\bq$ is a comparison point for some $q \in [p,y]$. Moreover, $x$ and $q$ are on the sides of $T_1$ and $q$ and $z$ are on the sides of $T_2$. Since $T_1$ and $T_2$ satisfy timelike curvature bounds, we compute
\begin{equation}
\btau(\bx,\bz)=\btau(\bx,\bq)+\btau(\bq,\bz) \leq \tau(x,q)+\tau(q,z) \leq \tau(x,z).\footnote{It may a priori not be clear that $x \leq q \leq z$ so one can apply the reverse triangle inequality, but this can be seen as follows: we have $\bq \in [\bx,\bz]$, hence $\bx \ll \bq \ll \bz$ and so $0 < \btau(\bx,\bq) \leq \tau(x,q)$ by the curvature condition and thus $x \ll q$. We get a similar estimate for $q$ and $z$.}
\end{equation}
But this is in contradiction to
\begin{equation}
\tau(x,z)=\tau(x,p)+\tau(p,z)=\btau(\bx,\bp)+\btau(\bp,\bz)<\btau(\bx,\bz), 
\end{equation}
so such an intersection cannot occur with an upper curvature bound.
Thus, $[\bp,\by] \cap [\bx,\bz] =\emptyset$ is the only interesting case we have to consider. Moreover, as $\tau(x,p)+\tau(p,z)=\tau(x,z)\geq \tau(x,y)+\tau(y,z)$ by the reverse triangle inequality, we can realize the situation in $M_K$ as in Figure \ref{gluing figure}. \\

What follows now are several applications of Alexandrov's Lemma \ref{alexlem}. First, we ``bend'' $\bT_1$ and $\bT_2$ in such a way that they form a comparison triangle for $T_3$, cf.\ Figure \ref{visual alexlem}. More precisely, let $\bT_3:=\Delta(\bx',\by',\bz')$, where $\btau(\bx',\by')=\btau(\bx,\by)=\tau(x,y), \btau(\by',\bz') = \btau(\by,\bz) = \tau(y,z)$ and $\btau(\bx',\bz')=\btau(\bx,\bp)+\btau(\bp,\bz)=\tau(x,p)+\tau(p,z)=\tau(x,z)$. 
When talking about the comparison triangles for $T_1$ and $T_2$ simultaneously, it will be convenient to denote their union, which is a quadrilateral, by $\bT_1 \cup \bT_2$. That is, by $\ba \in \bT_1 \cup \bT_2$ we mean a point which belongs either to $\bT_1$ or $\bT_2$ (or both if $\ba \in [\bp,\by]$).
We distinguish several cases, depending on which sides the points lie on. Note that for any point on $\bT_3$, we can find a ``comparison point'' in either $\bT_1$ or $\bT_2$, i.e., a point on the corresponding side of equal time separation to the endpoints (the common edge $[\bp,\by]$ of $\bT_1$ and $\bT_2$ is the only one not (isometrically) transferred to $\bT_3$). 
We choose two points $\ba', \bb' \in \bT_3$ and check all possible configurations.
The general idea is to at first show that time separation in $\bT_3$ is even smaller than in $\bT_1 \cup \bT_2$, i.e., $\btau(\ba,\bb) \geq \btau(\ba',\bb')$, and then relating this inequality to time separations in $X$, i.e., $\tau(a,b) \geq \btau(\ba,\bb)$, thus ensuring the desired curvature bound. 
We may assume without loss of generality that $\ba' \ll \bb'$ always holds, since otherwise there is nothing to show. 
Since all these (sub-)case distinctions may become a bit confusing, we try to give a ``to-do list'' summarizing everything. Note that some descriptions might only make sense when reading the proof of the respective cases. The cases (B) and (C) are entirely analogous, hence we omit the descriptions in (C).
\begin{itemize}
\item[(A)] $\ba' \in [\bx',\by']$ and $\bb' \in [\by',\bz']$ (both points on short sides): this case is easy and also the only one where we do not need any subcases.
\item[(B)] $\ba' \in [\bx',\by']$ and $\bb' \in [\bx',\bz']$ (one point on short and long side each): here, we distinguish whether $\ba'$ and $\bb'$ are in the same triangle or not.
\begin{itemize}
\item[(1)] $\bb' \in [\bx',\bp']$ (same triangle): this case is easy and very similar to (A).
\item[(2)] $\bb' \in [\bp',\bz']$ (different triangle): this needs yet another distinction, namely whether the connecting segment $[\ba,\bb]$ stays inside $\bT_1 \cup \bT_2$ or not.
\begin{itemize}
\item[(i)] $[\ba,\bb]$ stays inside comparison situation: we construct several subtriangles and use the law of cosines.
\item[(ii)] $[\ba,\bb]$ leaves comparison situation: we improve the bound on $\btau(\ba',\bb')$ by taking a detour through $\bp$.
\end{itemize} 
\end{itemize}
\item[(C)] $\ba' \in [\by',\bz']$ and $\bb' \in [\bx',\bz']$ (one point on short and long side each)
\begin{itemize}
\item[(1)] $\bb' \in [\bp',\bz']$ (same triangle)
\item[(2)] $\bb' \in [\bx',\bp']$ (different triangle)
\begin{itemize}
\item[(i)] $[\ba,\bb]$ stays inside comparison situation
\item[(ii)] $[\ba,\bb]$ leaves comparison situation
\end{itemize} 
\end{itemize}
\end{itemize}
Finally, observe that switching the labels or assuming $\bb' \ll \ba'$ does not change the proof at all. Hence this is a complete list that covers all possible configurations. \\

\underline{$\ba' \in [\bx',\by']$ and $\bb' \in [\by',\bz']$ (A)}: 
In this first case $\ba' \ll \bb'$ holds anyways.
We find comparison points $\ba \in [\bx,\by]$ and $\bb \in [\by,\bz]$, i.e., $\btau(\bx,\ba)=\btau(\bx',\ba')$ and $\btau(\by,\bb)=\btau(\by',\bb')$. Consider the triangles $\Delta(\bx,\by,\bz)$ and $\Delta(\bx',\by',\bz')$. In these triangles, two sidelengths are the same, namely $|\bx \by|_{\pm}=|\bx' \by' |_{\pm}$ and $|\by \bz|_{\pm}=|\by' \bz' |_{\pm}$. 
For the third side, we have $\btau(\bx',\bz')=\btau(\bx,\bp)+\btau(\bp,\bz) \leq \btau(\bx,\bz)$, and hence $|\bx \bz|_{\pm} \leq |\bx' \bz' |_{\pm}$. Then by the hinge lemma we infer $\angle \bx \by \bz \geq \angle \bx' \by' \bz'$, exactly as in Lemma \ref{alexlem}. 
Now consider the ``smaller'' triangles $\Delta(\ba,\by,\bb)$ and $\Delta(\ba',\by',\bb')$, i.e., instead of the sides $[\bx,\by]$ and $[\by,\bz]$ we consider the (from the point of view of $\by$) initial segments $[\ba,\by]$ and $[\by,\bb]$ (and the same in the other triangle). Again, two side lengths are pairwise equal. We want to use the hinge lemma in the other direction to obtain estimates on the third side.
This is easily possible since $\angle \ba \by \bb$ and $\angle \ba' \by' \bb'$ are equal multiples of $\angle \bx \by \bz$ and $\angle \bx' \by' \bz'$, respectively.
Thus, we obtain $|\ba \bb|_{\pm} \leq |\ba' \bb'|_{\pm}$ and hence $\btau(\ba,\bb) \geq \btau(\ba',\bb')$. \\

We finished the case were both points lie on short sides. We are left with considering pairs of points where one is on the longest side and the other on a short side. These two possibilities clearly are analogous (since they only differ by time orientation), so we will only consider case (B) explicitly. In each of these cases, however, there are subcases which are respectively similar as well. Namely, we have to distinguish whether the point on the long side is chronologically before or after $\bp'$, i.e., if it is in $[\bx',\bp']$ or in $[\bp',\bz']$. In other words, this distinction tells us if the two points originate from the same triangle or not.
Before returning to the proof, observe that by Lemma \ref{alexlem} we have $\btau(\bp,\by) \geq \btau(\bp',\by')$ as well as $\angle \bp \bx \by \geq \angle \bp' \bx' \by'$ and $\angle \bp \bz \by \geq \angle \bp' \bz' \by'$. \\

\underline{$\ba' \in [\bx',\by']$ and $\bb' \in [\bx',\bp'] \subseteq [\bx',\bz']$ (B.1)}: 
Here, both points are from $\bT_1$.
Consider the smaller triangles $\Delta(\bx,\ba,\bb)$ and $\Delta(\bx',\ba',\bb')$. 
We have as in the case (A) above that $\angle \ba \bx \bp$ and $\angle \ba' \bx' \bp'$ are equal multiples of $\angle \by \bx \bp$ and $\angle \by' \bx' \bp'$, respectively. Two sides are of equal length by construction. Thus, we infer $|\ba \bb|_{\pm} \leq |\ba' \bb'|_{\pm}$ from the hinge lemma and hence $\btau(\ba,\bb) \geq \btau(\ba',\bb')$. 
Note that in this case (and also in (C.1)), since both points originate from the same triangles, we immediately obtain $\tau(a,b) \geq \btau(\ba,\bb)$ as well. \\

Looking at the list from above, we are now in the case (B.2), where we have to make yet another distinction. The extension of $[\bp,\bz]$ meets $[\bx,\by]$ in a unique point, denote it by $\bq$.
We have to distinguish whether $\ba$ lies chronologically before or after $\bq$. This distinction in particular tells us if $[\ba,\bb]$ lies inside of $\bT_1 \cup \bT_2$ or not. We first cover the case where this segment lies inside the triangles (the other case requires even more extra work). \\

\underline{$\ba' \in [\bq',\by'] \subseteq [\bx',\by']$ and $\bb' \in [\bp',\bz']\subseteq [\bx',\bz']$ (B.2.i)}: 
We try to construct a triangle in $\bT_1 \cup \bT_2$ that has both $\ba$ and $\bb$ as vertices and somehow inherits enough properties so that we can deduce the claim, see Figure \ref{case B}.
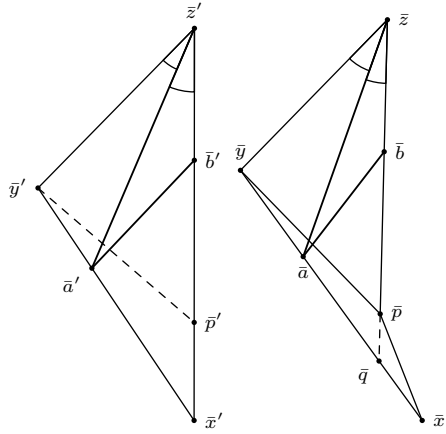
\begin{figure}
\begin{center}
\begin{tikzpicture}[line cap=round,line join=round,>=triangle 45,x=1cm,y=1cm]
\clip(0.4909702706196152,-0.25191180407216) rectangle (9.142544659670762,5.786464510603467);

\draw [shift={(3.5047761204274037,5.200002193396301)},line width=0.5pt] (0,0) -- (-134.1781543193275:0.5903485409608712) arc (-134.1781543193275:-112.9821786233633:0.5903485409608712) -- cycle;

\draw [shift={(3.5047761204274037,5.200002193396301)},line width=0.5pt] (0,0) -- (-112.9821786233633:0.8433550585155303) arc (-112.9821786233633:-90.05262525972239:0.8433550585155303) -- cycle;

\draw [shift={(6.042812190725555,5.31366188908918)},line width=0.5pt] (0,0) -- (-134.0724602471525:0.7168517997382007) arc (-134.0724602471525:-109.43267256627885:0.7168517997382007) -- cycle;

\draw [shift={(6.042812190725555,5.31366188908918)},line width=0.5pt] (0,0) -- (-109.43267256627885:0.8433550585155303) arc (-109.43267256627885:-91.3982148806444:0.8433550585155303) -- cycle;

\draw [line width=0.5pt] (6.5,0)-- (4.109832438913905,3.3170620992179582);
\draw [line width=0.5pt] (4.109832438913905,3.3170620992179582)-- (5.947591456628026,1.4124996278903383);
\draw [line width=0.5pt] (5.947591456628026,1.4124996278903383)-- (6.5,0);
\draw [line width=0.5pt] (5.947591456628026,1.4124996278903383)-- (6.042812190725555,5.31366188908918);
\draw [line width=0.5pt] (6.042812190725555,5.31366188908918)-- (4.109832438913905,3.3170620992179582);
\draw [line width=0.5pt] (1.4473878102759392,3.082728791412537)-- (3.5,0);
\draw [line width=0.5pt] (1.4473878102759392,3.082728791412537)-- (3.5047761204274037,5.200002193396301);

\draw [line width=0.5pt] (3.5047761204274037,5.200002193396301)-- (3.5,0);
\draw [line width=0.5pt,dashed] (1.4473878102759392,3.082728791412537)-- (3.5011940301068507,1.3000005483490746);

\draw [line width=0.7pt] (3.5031692629291817,3.4505357295089287)-- (2.155678780018674,2.0189774524823294);
\draw [line width=0.7pt] (4.93460299623817,2.1724498131203105)-- (6.0000979828528695,3.5636747948996943);
\draw [line width=0.7pt] (4.93460299623817,2.1724498131203105)-- (6.042812190725555,5.31366188908918);
\draw [line width=0.7pt] (2.155678780018674,2.0189774524823294)-- (3.5047761204274037,5.200002193396301);

\draw [line width=0.5pt, dashed] (5.947591456628026,1.4124996278903383)--(5.932343379416936,0.787790903099117);

\begin{scriptsize}
\coordinate [circle, fill=black, inner sep=0.7pt, label=0: {$\bx$}] (A1) at (6.5,0);

\coordinate [circle, fill=black, inner sep=0.7pt, label=0: {$\bp$}] (A1) at (5.947591456628026,1.4124996278903383);

\coordinate [circle, fill=black, inner sep=0.7pt, label=90: {$\by$}] (A1) at (4.109832438913905,3.3170620992179582);

\coordinate [circle, fill=black, inner sep=0.7pt, label=0: {$\bz$}] (A1) at (6.042812190725555,5.31366188908918);

\coordinate [circle, fill=black, inner sep=0.7pt, label=0: {$\bx'$}] (A1) at (3.5,0);

\coordinate [circle, fill=black, inner sep=0.7pt, label=180: {$\by'$}] (A1) at (1.4473878102759392,3.082728791412537);

\coordinate [circle, fill=black, inner sep=0.7pt, label=90: {$\bz'$}] (A1) at (3.5047761204274037,5.200002193396301);

\coordinate [circle, fill=black, inner sep=0.7pt, label=0: {$\bp'$}] (A1) at (3.5011940301068507,1.3000005483490746);

\coordinate [circle, fill=black, inner sep=0.7pt, label=270: {$\ba$}] (A1) at (4.93460299623817,2.1724498131203105);

\coordinate [circle, fill=black, inner sep=0.7pt, label=200: {$\ba'$}] (A1) at (2.155678780018674,2.0189774524823294);

\coordinate [circle, fill=black, inner sep=0.7pt, label=0: {$\bb$}] (A1) at (6.0000979828528695,3.5636747948996943);

\coordinate [circle, fill=black, inner sep=0.7pt, label=0: {$\bb'$}] (A1) at (3.5031692629291817,3.4505357295089287);

\coordinate [circle, fill=black, inner sep=0.7pt, label=200: {$\bq$}] (A1) at (5.932343379416936,0.787790903099117);
\end{scriptsize}
\end{tikzpicture}
\end{center}
\caption{This configuration requires some additional construction steps.}
\label{case B}
\end{figure}
In the end, this will be the timelike triangle $\Delta(\ba,\bb,\bz)$. However, before this we need to estimate the hyperbolic angle $\mad_{\bz}(\bb,\ba)$. 
Recall that we have $\angle \by \bz \bp \geq \angle \by' \bz' \bp'$. As the two legs have the same time orientation and the same length, we infer $\mad_{\bz}(\by,\bp) \leq \mad_{\bz'}(\by',\bp')$, cf.\ Remark \ref{angle inequality}. Now consider the timelike triangles $\Delta(\ba,\by,\bz)$ and $\Delta(\ba',\by',\bz')$. They have two sides of pairwise equal length. Then $|\ba\bz|_{\pm} \leq |\ba'\bz'|_{\pm}$ easily follows from the hinge lemma. 
As both triangles are timelike, we can change our perspective in the sense that we go from signed lengths and the hinge lemma to (positive) time separation values and the Lorentzian law of cosines, cf.\ \cite[Theorem 3.1.3]{Kir18}.
We do this because the adjacent sides of 
$\angle \ba \bz \by$ and $\angle \ba' \bz' \by'$ 
do not have pairwise equal length.
Recall the following consequence of the Lorentzian law of cosines: fixing the two short sides in a timelike triangle and letting the longest side vary, \emph{any} hyperbolic angle is an increasing function in the length (understood as time separation) of the longest side.
In our case, $[\ba, \bz]$ and $[\ba',\bz']$ are the longest sides 
and so we obtain $\mad_{\bz}(\ba,\by) \geq \mad_{\bz'}(\ba',\by')$. 
Also, since 
$\mad_{\bz}(\by,\ba)+\mad_{\bz}(\ba,\bp) = \mad_{\bz}(\by,\bp) 
\leq \mad_{\bz'}(\by',\bp') = \mad_{\bz'}(\by',\ba') + \mad_{\bz'}(\ba',\bp')$, we must have $\mad_{\bz}(\ba,\bb)=\mad_{\bz}(\ba,\bp) \leq \mad_{\bz'}(\ba',\bp')=\mad_{\bz'}(\ba',\bb')$. 
Finally, consider $\Delta(\ba,\bb,\bz)$ and $\Delta(\ba',\bb',\bz')$. We know 
$B:=\btau(\bb,\bz) = \btau(\bb',\bz'), A:=\btau(\ba,\bz) \geq \btau(\ba',\bz')=:A'$ and 
$\omega := \mad_{\bz}(\ba,\bb) \leq \mad_{\bz'}(\ba',\bb') =: \omega'$ and we want to infer $C:=\btau(\ba',\bb') \leq \btau(\ba',\bb')=:C'$. In two steps, we want to transform one triangle into the other. In $\Delta(\ba',\bb',\bz')$, fix $A'$ and $B$ and decrease $\omega'$ to $\omega$. From the law of cosines, it easily follows that $C'$ increases as $\omega'$ decreases. In this way we obtain an intermediate triangle with sidelengths $A',B$ and a hyperbolic angle $\omega$ opposite of a side $\tilde{C} \geq C'$.
Now we want to increase $A'$ to $A$ while keeping $B$ and $\omega$ fixed so that $\tilde{C}$ \emph{changes} to $C$ (we do not yet know whether it increases or decreases).
But this is precisely the situation in Remark \ref{hingebehaviour}, and so we obtain $C \geq \tilde{C} \geq C'$, as desired. \\

So we are only left with the case (B.2.ii).
In almost all the cases we covered so far, relating the distance in $\bT_1 \cup \bT_2$ with the original distances in $X$ is not difficult. Either the points already stem from the same small triangle in $X$, e.g., $a \in [x,y], b \in [x,p]\subseteq [x,z]$ in (B.1), in which case we have $\tau(a,b) \geq \btau(\ba,\bb)$ by assumption. 
Or the points lie in different triangles but the connecting geodesic is entirely contained in the union of $\bT_1$ and $\bT_2$. For instance in (B.2.ii), the very last case we covered, for $\ba' \in [\bx,\bq] \subseteq [\bx',\by']$ and $\bb' \in [\bp',\bz']\subseteq [\bx',\bz']$ there exists an intersection point of $[\bp,\by]$ and $[\ba,\bb]$, call it $\br$. 
Then we compute
\begin{equation}
\btau(\ba',\bb')\leq \btau(\ba,\bb) = \btau(\ba, \br) + \btau( \br,\bb)\leq \tau(a,r)+\tau(r,b) \leq \tau(a,b).
\end{equation}
Illustratively, we divided the segment $[\ba,\bb]$ in two parts which both lie inside a single triangle and then apply the respective curvature conditions of $T_1$ and $T_2$. 
The reader may already have suspected that this unfortunately is not always possible. Take $\ba \in [\bx,\by], \bb \in [\by,\bz]$ in case (A), where $\ba$ and $\bb$ are close to $\bx$ and $\bz$, respectively. Then the connecting segment leaves the triangles. While $\btau(\ba,\bb) \geq \btau(\ba',\bb')$ is easy to see, we have no way of relating $\btau(\ba,\bb)$ with $\tau(a,b)$. Moreover, in the remaining case (B.2.ii) this is by assumption always the case. We rectify this situation by improving the bound on $\btau(\ba',\bb')$: in (B.2.ii) and the cases of (A) where the connecting segment leaves the triangle, we want to show 
\begin{equation}
\label{bound1}
\btau(\ba,\bp)+\btau(\bp,\bb) \geq \btau(\ba',\bb').
\end{equation}
To make the following already very lengthy computations a bit easier and also to do both cases kind of simultaneously, we assume for now $\bb'=\bz'$. After showing (\ref{bound1}) in this case, we let $\bb'$ vary. By moving $\bb'$ onto $[\by',\bz']$ we cover the remaining case of (A) and by moving $\bb'$ onto $[\bp',\bz']$ we cover (B.2.ii).
Remember that we denote by $\bq$ the point where the extension of $[\bp,\bz]$ meets $[\bx,\by]$. \\
 
\underline{$\ba \in [\bx,\bq]\subseteq [\bx,\by]$ and $\bb=\bz$ (B.2.ii \& A)}: 
If we can show (\ref{bound1}), then this implies $\tau(a,z) \geq \tau(a,p)+\tau(p,z) \geq \btau(\ba,\bp)+\btau(\bp,\bz) \geq \btau(\ba',\bz')$.
\begin{figure}
\begin{center}
\begin{tikzpicture}[line cap=round,line join=round,>=triangle 45,x=1cm,y=1cm]
\clip(-1.4673467191946998,-0.2537959677252801) rectangle (8.370496166452751,6.008419325580024);
\draw [line width=0.7pt] (-0.3448789520193921,1.9310467346871731)-- (0,0);
\draw [line width=0.7pt] (2.9312356251033593,2.1799672678845905)-- (4,0);
\draw [line width=0.7pt] (0,0)-- (0.004410691598607807,1.4000069479114658);
\draw [line width=0.7pt] (0.004410691598607807,1.4000069479114658)-- (-0.3448789520193921,1.9310467346871731);
\draw [line width=0.7pt] (-0.3448789520193921,1.9310467346871731)-- (2.3860206269701507,5.633453048225068);
\draw [line width=0.7pt,red] (2.3860206269701507,5.633453048225068)-- (0.004410691598607807,1.4000069479114658);
\draw [line width=0.7pt] (2.9312356251033593,2.1799672678845905)-- (4.002955230111009,4.900000891161655);
\draw [line width=0.7pt] (4.002955230111009,4.900000891161655)-- (4,0);
\draw [line width=0.7pt,dashed] (2.9312356251033593,2.1799672678845905)-- (4.000844351460288,1.400000254617616);
\draw [line width=0.7pt,red] (-0.06442575264226036,0.3607327688197249)-- (0.004410691598607807,1.4000069479114658);
\draw [line width=0.7pt,red] (3.8003474875843417,0.40723283095878643)-- (4.002955230111009,4.900000891161655);
\draw [line width=0.4pt,dashed, blue] (-0.06442575264226036,0.3607327688197249)-- (2.3860206269701507,5.633453048225068);
\draw [line width=0.4pt,dashed] (-0.1887233809327427,1.0567002317056584)-- (0.004410691598607807,1.4000069479114658);
\begin{scriptsize}
\coordinate [circle, fill=black, inner sep=0.7pt, label=0: {$\bx$}] (A1) at (0,0);

\coordinate [circle, fill=black, inner sep=0.7pt, label=0: {$\bp$}] (A1) at (0.004410691598607807,1.4000069479114658);

\coordinate [circle, fill=black, inner sep=0.7pt, label=180: {$\by$}] (A1) at (-0.3448789520193921,1.9310467346871731);

\coordinate [circle, fill=black, inner sep=0.7pt, label=0: {$\bz$}] (A1) at (2.3860206269701507,5.633453048225068);

\coordinate [circle, fill=black, inner sep=0.7pt, label=180: {$\bq$}] (A1) at (-0.1887233809327427,1.0567002317056584);

\coordinate [circle, fill=black, inner sep=0.7pt, label=180: {$\ba$}] (A1) at (-0.06442575264226036,0.3607327688197249);

\coordinate [circle, fill=black, inner sep=0.7pt, label=0: {$\bx'$}] (A1) at (4,0);

\coordinate [circle, fill=black, inner sep=0.7pt, label=180: {$\by'$}] (A1) at (2.9312356251033593,2.1799672678845905);

\coordinate [circle, fill=black, inner sep=0.7pt, label=180: {$\ba'$}] (A1) at (3.8003474875843417,0.40723283095878643);

\coordinate [circle, fill=black, inner sep=0.7pt, label=90: {$\bz'$}] (A1) at (4.002955230111009,4.900000891161655);

\coordinate [circle, fill=black, inner sep=0.7pt, label=0: {$\bp'$}] (A1) at (4.000844351460288,1.400000254617616);
\end{scriptsize}
\end{tikzpicture}
\caption{Improving the bound on $\btau(\ba',\bz')$.}
\label{worst case}
\end{center}
\end{figure}
In Figure \ref{worst case}, this effectively means that the detour through $\bp$ in $\bT_1 \cup \bT_2$ is still larger than the direct connection in $\bT_3$.
We proceed in an elementary yet effective way. We define a function that compares these two lengths and show that its sign does not change. 
Unfortunately, we have to show this separately for the different cases of curvature. \\

\underline{Minkowski space ($K=0$)}:
Recall that the points $\ba$ and $\ba'$ can be described as $\ba = \gamma_{\bx \by}(t)=ty+(1-t)x$ and $\ba' = \gamma_{\bx' \by'}(t)=ty'+(1-t)x'$, respectively. Set $\gamma_{\bx \by}(m)=:\bq \in [\bx,\by]$ and similarly $\gamma_{\bx' \by'}(m)=:\bq' \in [\bx',\by']$.
Then define $f:[0,m] \rar \R$ by 
\begin{equation}
f(t):= (\btau(\ba,\bp)+\btau(\bp,\bz))^2-\btau(\ba',\bz')^2.
\end{equation} 
We square these values solely for ease of computation. 
If $t=0$, then $\ba=\bx$ and $\ba'=\bx'$, and since by construction $\btau(\bx,\bp)+\btau(\bp,\bz)=\btau(\bx',\bz')$, we have $f(0)=0$. 
If $t=m$, then $\ba=\bq$ and since $\bq$ lies on the extension of the segment $[\bp,\bz]$, we have $\btau(\bq,\bp)+\btau(\bp,\bz)=\btau(\bq,\bz) \geq \btau(\bq',\bz')$, hence $f(m)\geq 0$. 
Thus, if we can show that $f'' \leq 0$, we obtain $f\geq 0$ and so $(\btau(\ba,\bp)+\btau(\bp,\bz))^2 \geq \btau(\ba',\bz')^2$ which clearly also implies $\btau(\ba,\bp)+\btau(\bp,\bz) \geq \btau(\ba',\bz')$. 
Also note that since $\ba \ll \bq \ll \bp$, all distances are timelike. For the calculation, we first observe that 
\begin{equation}
\gamma_{\ba \bp}'(0) = 
\bp -\bx + t(\bx-\by),
\end{equation}
and we get a similar expression for $\gamma_{\ba' \bz'}'(0)$.
After calculating the squares one observes that $\langle \gamma_{\bp \bz}'(0),\gamma_{\bp \bz}'(0)\rangle$ does not depend on $t$ at all (since it does not depend on the position of $\ba$), so we can ignore this value for the derivative of $f$. 	
The following expressions become quite lengthy, so we abbreviate $B:=\gamma_{\ba \bp}'(0), A:=\frac{d}{dt}\gamma_{\ba \bp}'(0)=\bx-\by, B':=\gamma_{\ba' \bz'}'(0)$ and $A':=\frac{d}{dt}\gamma_{\ba' \bz'}'(0)=\bx'-\by'$. Then $f$ reads 
\begin{equation}
f(t)= -\langle B,B\rangle -2|\bp\bz|_{\pm}\sqrt{-\langle B,B\rangle} -\langle \gamma_{\bp \bz}'(0),\gamma_{\bp \bz}'(0)\rangle +\langle B',B'\rangle,
\end{equation}
and
\begin{equation}
\frac{d}{dt}f(t)=-2\langle A,B \rangle -2|\bp\bz|_{\pm}\frac{-\langle A,B \rangle}{\sqrt{-\langle B,B \rangle}} +2\langle A',B'\rangle.
\end{equation}
We further compute the second derivative of the middle term:
\begin{equation}
\frac{d}{dt}\frac{-\langle A,B \rangle}{\sqrt{-\langle B,B \rangle}} = 
\frac{\langle A,A \rangle \langle B,B \rangle - \langle A,B \rangle ^2}{\sqrt{(-\langle B,B \rangle)^3}} \leq 0,
\end{equation}
where the inequality follows by the reverse Cauchy Schwarz inequality for timelike vectors, cf. \cite[Prop. 5.30 (i)]{on83}. 
For the remaining terms of $(\frac{d}{dt})^2f(t)$, we have
\begin{equation}
\frac{d}{dt}2\langle A,B \rangle=2\langle A,A\rangle=2\langle A',A'\rangle=\frac{d}{dt}2\langle A',B'\rangle,
\end{equation}
where the middle equality holds since by construction we have $\btau(\bx,\by)=\btau(\bx',\by')$. Thus, these two terms cancel out and the second derivative of $f$ consists only of a nonpositive term. Keep in mind that $-2|\bp\bz|_{\pm}$ is positive as $|\bp\bz|_{\pm}$ is negative. \\

\underline{De Sitter space ($K=1$)}: In de Sitter space, the situation is more involved. Here, we will need Lemma \ref{help}. In this case and the following case of anti-de Sitter space, we choose to omit the bars of points since we are anyways only working in $M_K$. Let $\gamma_v$ be the unique geodesic from $x$ to $y$, i.e., $\gamma_v(t):= \cosh(t)x+\sinh(t)v$, where $v$ is a timelike unit vector perpendicular to $x$. That is, $\gamma_v$ is the unit-speed parameterization of $\gamma_{xy}$.
Let $q :=\gamma_v(m)$ be the (unique) point in the intersection of $[\bx,\by]$ and the geodesic extending $\gamma_{\bz \bp}$. 
Applying a suitable Lorentz transformation we may assume that $x=x',y=y'$ and so $v=v'$ and $a=a'$. We let $a$ vary on the geodesic $\gamma_v$ between the parameters $0$ and $m$. The point $a$ is given by $a=\gamma_v(t)=\cosh(t)x+\sinh(t)v=a'$.
Then define $f: [0,m] \rar \R$, 
\begin{equation}
f(t):=\cosh(\tau(a,p)+\tau(p,z))-\cosh(\tau(a,z'),
\end{equation}
where we applied the monotone function $\cosh$ to make computation easier. As in the Minkowski case we know $f(0)=0$ and $f(m)\geq 0$. But we will show $f''(t)-f(t) \leq 0$ instead of $f''(t)\leq 0$, which suffices to infer $f(t) \geq 0$ by Lemma \ref{help}. 
Note that in de Sitter space we have $\tau(x,y)=\arcosh(\langle x,y \rangle)$ for $x \ll y$, cf.\ \cite{CHKR17}.
Then with the help of addition theorems for $\cosh$, the function $f$ simplifies to
\begin{equation}
f(t)=\langle a,p \rangle \langle p,z \rangle + \sinh(\arcosh(\langle a,p \rangle))\sinh(\arcosh(\langle p,z \rangle)) - \langle a,z' \rangle.
\end{equation}
Using the relation $\sinh(\arcosh(x))=\sqrt{x^2-1}$, we further obtain
\begin{equation}
f(t)= 
\langle a,p \rangle \langle p,z \rangle + 
\sqrt{\langle a,p \rangle^2-1}\sqrt{\langle p,z \rangle^2-1} - 
\langle a,z' \rangle.
\end{equation}
Now we take the first derivative. Note that of the three points appearing, only $a$ depends on $t$. For now, we will leave the derivative of $a$ not explicitly calculated. This will be simpler to do later on.
\begin{equation}
\frac{d}{dt}f(t)= 
\langle \frac{d}{dt} a,p \rangle \langle p,z \rangle + 
\sqrt{\langle p,z \rangle^2-1} 
\frac{\langle a,p \rangle \langle \frac{d}{dt} a,p \rangle}
{\sqrt{\langle a,p \rangle^2-1}} - 
\langle \frac{d}{dt} a,z' \rangle
\end{equation}
For the second derivative, note that 
$(\frac{d}{dt})^2a=a$. We calculate and simplify
\begin{equation}
\left( \frac{d}{dt} \right) ^2f(t) = 
\langle a,p \rangle \langle p,z \rangle + 
\sqrt{\langle p,z \rangle^2-1}
\left(
\frac{\langle \frac{d}{dt}a,p \rangle^2+\langle a,p \rangle^2}{\sqrt{\langle a,p \rangle^2-1}} -
\frac{\langle a,p \rangle^2 \langle \frac{d}{dt} a,p \rangle^2}{\sqrt{\langle a,p \rangle^2-1}^3}
\right) - 
\langle a,z' \rangle.
\end{equation}
Now we observe that this very much resembles the original function $f$. Using this, we can write
\begin{scriptsize}
\begin{equation}
\left( \frac{d}{dt} \right) ^2f(t) - f(t)=  -
\sqrt{\langle a,p \rangle^2-1}\sqrt{\langle p,z \rangle^2-1} +
\sqrt{\langle p,z \rangle^2-1}
\left(
\frac{\langle \frac{d}{dt}a,p \rangle^2+\langle a,p \rangle^2}{\sqrt{\langle a,p \rangle^2-1}} -
\frac{\langle a,p \rangle^2 \langle \frac{d}{dt} a,p \rangle^2}{\sqrt{\langle a,p \rangle^2-1}^3}
\right).
\end{equation}
\end{scriptsize}
Then by Lemma \ref{help} it suffices to show that the right hand side is lesser than or equal to 0. 
We further simplify this expression and observe that it is equivalent to
\begin{equation}
\langle a,p \rangle^2-1 -
\langle \frac{d}{dt}a,p \rangle^2 
\leq 0.
\end{equation}
Now we insert $a=\cosh(t)x+\sinh(t)v$ and $\frac{d}{dt}a=\sinh(t)x+\cosh(t)v$ into this inequality and at the same time use the bi-linearity of the scalar product. In this way we get
\begin{equation}
(\cosh(t)\langle x,p \rangle+\sinh(t)\langle v,p \rangle)^2 -1 -
(\sinh(t)\langle x,p \rangle+\cosh(t)\langle v,p \rangle)^2 
\leq 0.
\end{equation}
After computing the big square, we factor appropriately and use $\cosh(t)^2-\sinh(t)^2=1$ to obtain
\begin{equation}
\langle x,p \rangle^2 - 1 
\leq \langle v,p\rangle^2.
\end{equation}
Since $\tau(x,p)=\arcosh(\langle x,p \rangle)$, we have $\langle x,p \rangle=\cosh(\tau(x,p))$ and so $\cosh(\tau(x,p))^2 - 1=\sinh(\tau(x,p))^2$. Thus, after taking the root on both sides we finally end up with
\begin{equation}
\sinh(\tau(x,p)) \leq |\langle v,p\rangle|.
\end{equation}
We now try to give these reformulations some intuitive value. 
First note that since $v$ by definition is perpendicular to $x$, we can write $|\langle v,p\rangle|=|\langle v,p-x\rangle|$. This gives this scalar product a bit more meaning, because $v$ is a (unit) tangent vector at $x$ while $p$ is a point in de Sitter space. Now both $p-x$ and $v$ can be regarded as vectors in the ambient Minkowski space starting at $x$. 
In particular, they are timelike (since $x \ll p$ and $x \ll y$) and so we can use the hyperbolic angle: 
\begin{equation}
| \langle v,p-x \rangle|=-\langle v,p-x\rangle = \| v \| \| p-x \| \cosh(\mad_x(v,p-x))=\| p-x \| \cosh(\mad_x(v,p-x)).
\end{equation}
We now try to minimize this expression while letting $v$ vary among unit tangent vectors. 
In this way, only $\cosh(\mad_x(v,p-x))$ changes, and it is minimal if and only if $\mad_x(v,p-x)$ is minimal. This angle is minimal precisely if $v$ is the unit vector in the direction of the projection of $p-x$ onto the tangent space. 
Since $p$ is in a normal neighbourhood of $x$, we can write $p=\cosh(s)x+\sinh(s)w$ for some appropriate unit vector $w$ and $s=\tau(x,p)$. Then $p-x=(\cosh(s)-1)x+\sinh(s)w$. We can consider this as an orthogonal decomposition with respect to $T_x \dS$: $(\cosh(s)-1)x$ is the orthogonal component and $\sinh(s)w$ is the parallel component. Then the angle is minimal if $v=w$. In particular, we then have 
\begin{align*}
\langle p-x,v \rangle & = \langle (\cosh(s)-1)x+\sinh(s)v,v \rangle= (\cosh(s)-1)\langle x,v\rangle+\sinh(s)\langle v,v \rangle \\
& =-\sinh(s) =-\sinh(\tau(x,p)).
\end{align*}
Hence $-\langle p-x,v \rangle=\sinh(\tau(x,p))$ which is precisely the left hand side. So if $v$ is any other (unit) vector than the projection of $x-p$ onto $T_x\dS$ then $|\langle v,p \rangle|$ would be even larger. Thus, the inequality holds and Lemma \ref{help} implies $f(t) \geq 0$, which in turn implies $\cosh(\tau(a,p)+\tau(p,z)) \geq \cosh(\tau(a,z')$. Since $\cosh$ is monotonously increasing, we finally obtain $\tau(a,p)+\tau(p,z)\geq \tau(a,z')$. \\

\underline{Anti-de Sitter space ($K=-1$)}: This case is very similar to the case of positive curvature. In fact, one can mostly copy the assumptions and computations from the de Sitter case. This time we define $f: [0,m] \rar \R$, 
\begin{equation}
f(t)=\cos(\tau(a,p)+\tau(p,z))-\cos(\tau(a,z').
\end{equation}
We omit the calculations for this case. Keep in mind that as $\cos$ is decreasing, we have to reverse some inequalities. \\

Clearly, the last two cases also work for any other $K \neq 0$, as one only has to respect additional scaling constants. \\

\underline{Letting $\bb$ vary}:
As announced before, we do not only want to cover the case $\bb=\bz$ but rather let $\bb$ vary. First suppose $\bb \in [\bp, \bz]$. We consider a function $f_b: [0,m] \rar \R$, 
\begin{equation}
f_b(t) := g(\btau(\ba,\bp)+\btau(\bp,\bb)) - g(\btau(\ba',\bb')), 
\end{equation}
where $g$ is, depending on $K$, one of the three monotone functions we introduced for easier computations.
In the above calculations concerning the second derivative, $\bz$ (and $\bz'$) did not really play an important role, so we could have easily exchanged this point for $\bb$ (or $\bb'$). In particular, we thus have $f_b(t) \geq 0$ if we can show that $f_b$ is nonnegative at the boundary of $[0,m]$. 
By definition we have $f_b(0)=0$. 
For $f_b(m)$, i.e., $\ba=\bq$, note that 
\begin{equation}
\btau(\bq,\bp)+\btau(\bp,\bb)+\btau(\bb,\bz)=\btau(\bq,\bz)\geq \btau(\bq',\bz') \geq \btau(\bq',\bb')+\btau(\bb',\bz').
\end{equation}
Since we have $\btau(\bb,\bz)=\btau(\bb', \bz')$ by construction, $\btau(\bq,\bp)+\btau(\bp,\bb) \geq \btau(\bq',\bb')$ follows and so $f_b(m) \geq 0$. \\

Now assume $\bb \in [\by,\bz]$. In contrast to $\bb \in [\bp, \bz]$, we have to restrict the position of $\ba$ even further since it may otherwise happen that the connecting geodesic $[\ba,\bb]$ stays inside the triangles. 
The values of $\ba$ we have to check actually depend on the position of $\bb$ in the following way: denote by $\br=:\gamma_v(n)$ the point in the intersection of $[\bx,\by]$ and the extension of $[\bp,\bb]$, where $\gamma_v$ is the unique geodesic from $x$ to $y$. 
In particular, $n \leq m$ with equality if and only if $\bb=\bz$. Then we consider $f_b:[0,n] \to \R$ as above. As for $f_b(n)$, note that we have 
\begin{equation}
\btau(\br,\bp)+\btau(\bp,\bb)-\btau(\br',\bb')=\btau(\br,\bb)-\btau(\br',\bb') \geq 0.
\end{equation}
In particular, $\btau(\br,\bp)+\btau(\bp,\bb) \geq \btau(\br',\bb')$. By applying $g$ and then bringing everything to one side we infer the claim $f_b(n) \geq 0$. 
Also, $f_b(0)=g(\btau(\bx,\bp)+\btau(\bp,\bb))-g(\btau(\bx',\bb'))$ is completely analogous to the case of $\ba$ and $\bz$ we explicitly calculated (with reversed time orientation).
Thus, we have finally shown all inequalities and so $T_3$ actually satisfies the same curvature bound from above. \\

As a last remark, we observe that every argument can be repeated if the timelike relation of $p$ and $y$ is reversed, i.e., if $y \ll p$. This is entirely analogous and basically differs only up to time orientation.
\end{proof}
For applications to a gluing theorem, we of course do not want to restrict the placement of $p$ to only the longest side in a triangle. As the proof of Lemma \ref{gluinglemma} basically only uses Lemma \ref{alexlem}, and there the placement of $p$ did not matter, we immediately obtain the following corollary.
\begin{cor}[Remaining constellations of the gluing lemma]
Let $X$ and $U$ be as in Lemma \ref{gluinglemma} and let $\Delta(x,y,z)$ be a timelike triangle in $U$. Let $p$ be a point on $[x,y]$ (or $[y,z]$) and consider the two resulting subtriangles that share the (timelike) segment $[p,z]$ (or $[x,p]$). If the subtriangles satisfy a timelike curvature bound from above, then so does the original triangle.
\end{cor}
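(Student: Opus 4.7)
The plan is to mirror the proof of Lemma \ref{gluinglemma} as closely as possible, replacing the single use of Alexandrov's Lemma \ref{alexlem} with its companion Lemma \ref{alexlem2}, which was designed exactly to handle the configuration where the subdividing point lies on a non-longest side. Concretely, for $p \in [x,y]$ the two subtriangles are $T_1 := \Delta(x,p,z)$ and $T_2 := \Delta(p,y,z)$ and they share the timelike segment $[p,z]$. So I first take comparison triangles $\bT_1 = \bad(\bx,\bp,\bz)$ and $\bT_2 = \bad(\bp,\by,\bz)$ in $M_K$, glued along $[\bp,\bz]$ with $\bx,\by$ on opposite sides, and I take $\bT_3 = \bad(\bx',\by',\bz')$ with $\btau(\bx',\by') = \btau(\bx,\bp) + \btau(\bp,\by)$, $\btau(\by',\bz') = \btau(\by,\bz)$, $\btau(\bx',\bz')= \btau(\bx,\bz)$.

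First I would rule out the degenerate situation $[\bx,\by] \cap [\bp,\bz] = \{\bq\} \neq \{\bp\}$ using upper curvature bounds on $T_1,T_2$ by exactly the argument given in Lemma \ref{gluinglemma}: reverse triangle inequality at $\bq$ combined with the curvature bound would force $\btau(\bx,\by) \geq \btau(\bx,\bp)+\btau(\bp,\by)$, contradicting the reverse triangle inequality in $X$. The case $\{\bp\}$ gives an immediate isometric identification, so I may assume $[\bx,\by] \cap [\bp,\bz] = \emptyset$. Lemma \ref{alexlem2} then provides the key inequalities: $\bang \bp \bz \by \geq \bang \bp'\bz' \by'$, $\bang \bp \bx \bz \geq \bang \bp' \bx' \bz'$, $\bang \bp\bx\by \geq \bang \bp' \bx' \by'$, and $|\bp \by|_{\pm} \leq |\bp' \by'|_{\pm}$, which are precisely the analogues of the estimates used in the original proof.

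Next I would run through the same case analysis (A), (B.1), (B.2.i), (B.2.ii), and (C), adapted to the new shared segment $[\bp,\bz]$ in place of $[\bp,\by]$: for each pair of points $\ba', \bb'$ on sides of $\bT_3$ one identifies comparison points $\ba,\bb$ in $\bT_1 \cup \bT_2$, applies the hinge lemma and the monotonicity of hyperbolic angles coming from the Lorentzian law of cosines, and finally compares $\btau(\ba,\bb)$ to $\tau(a,b)$ via the curvature bounds on $T_1$ and $T_2$. The case where the connecting geodesic $[\ba,\bb]$ leaves $\bT_1 \cup \bT_2$ is handled, as before, by the detour estimate $\btau(\ba,\bp)+\btau(\bp,\bb) \geq \btau(\ba',\bb')$, whose verification via the auxiliary function $f$ and Lemma \ref{help} is model-space specific but independent of which side $p$ lies on.

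The main obstacle, and the reason this is stated merely as a corollary, is bookkeeping: verifying that each case distinction in the original proof really does translate unchanged when the shared edge changes from $[\bp,\by]$ to $[\bp,\bz]$, and that the global monotonicity arguments for the auxiliary comparison function $f$ still only depend on the position of $\ba$ on $[\bx,\by]$ and not on which vertex is opposite to the glued edge. The case $p \in [y,z]$ is symmetric under swapping past and future, and the variant with $y \ll p$ reversed is, exactly as in Lemma \ref{gluinglemma}, only a matter of time orientation. Since none of the arguments from Lemma \ref{gluinglemma} used the fact that $[\bx,\bz]$ was the \emph{longest} side other than through the invocation of Alexandrov's lemma itself, once Lemma \ref{alexlem2} is substituted the proof goes through verbatim.
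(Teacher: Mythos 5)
Your proposal matches the paper's intended proof: the paper's entire argument for this corollary is "use Lemma \ref{alexlem2} in place of Lemma \ref{alexlem} and observe that everything works out similarly," and you have simply spelled out the bookkeeping that the paper leaves implicit. The case analysis, the exclusion of the degenerate intersection, the detour estimate via the auxiliary function, and the time-reversal symmetry are all exactly what is meant by "everything works out similarly."
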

\begin{proof}
In this case we mainly use Lemma \ref{alexlem2} and then observe that everything works out similarly as in the situation of the gluing lemma we just proved.
\end{proof}
Finally, we need a version of the gluing lemma that applies even when the shared segment in a Lorentzian pre-length space is not timelike. In general, this would not make any sense in an ordinary Lorentzian pre-length space as we have no concept of spacelike distance in such spaces. However, the main application of these lemmas is when the two spaces in the amalgamation are manifolds. In this case it does make sense to talk about a shared segment which is not timelike.
\begin{lem}[Gluing lemma for manifolds]
\label{mfgluinglemma}
Let $X$ and $U$ be as in Lemma \ref{gluinglemma}.
Let $X_1$ and $X_2$ be two strongly causal spacetimes and $U_i \subseteq X_i, (i=1,2)$ with the same assumptions as in Lemma \ref{gluinglemma} as well. 
Let $T_3:=\Delta(x,y,z)$ be a timelike triangle in $U \subseteq X$ and let $p \in [x,z]$. 
Assume that there exist geodesic triangles $T_1:=\Delta(x^1,p^1,y^1)$ and $T_2:=\Delta(p^2,y^2,z^2)$ in $U_1$ and $U_2$, respectively, such that
$\tau(x^1,p^1)=\tau(x,p), \tau(x^1,y^1)=\tau(x,y), \tau(p^2,z^2)=\tau(p,z), \tau(y^2,z^2)=\tau(y,z)$ and $|p^1y^1|_{\pm}=|p^2y^2|_{\pm}$.
If $T_1$ and $T_2$ satisfy curvature bounds above by $K$ in the sense of \cite{AB08}
\footnote{
In \cite{AB08}, a Lorentzian manifold is said to have (sectional) curvature bounded above by $K \in \R$ if spacelike sectional curvatures are $\leq K$ and timelike sectional curvatures are $\geq K$. We say a geodesic triangle satisfies such a curvature bound if $|ab|_{\pm} \leq |\ba \bb|_{\pm}$ for all points $a,b$ in the triangle and corresponding comparison points $\ba, \bb$ in the comparison triangle in $M_K$.
}, then $T_3$ has timelike curvature bounded above by $K$ in the sense of Definition \ref{tl curv bounds}.
\end{lem}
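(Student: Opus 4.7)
The proof mirrors the argument of Lemma \ref{gluinglemma}; the additional ingredient is that the agreement of signed distances $|p^1 y^1|_{\pm} = |p^2 y^2|_{\pm}$ lets us realise the two comparison triangles in $M_K$ as sharing a common edge even when that edge is spacelike or null. First I would construct comparison triangles $\bT_1 := \bad(\bx, \bp, \by)$ for $T_1$ and $\bT_2 := \bad(\bp, \by, \bz)$ for $T_2$ in $M_K$, arranged so that they share the edge $[\bp, \by]$ (which is possible precisely because the signed lengths match) and so that $\bx$ and $\bz$ lie on opposite sides of the geodesic line extending $[\bp, \by]$. The reverse triangle inequality in $X$ gives $\tau(x,z) = \tau(x,p) + \tau(p,z) \geq \tau(x,y) + \tau(y,z)$, so a comparison triangle $\bT_3 := \bad(\bx', \by', \bz')$ for $T_3$ also exists in $M_K$ (once the size bounds are checked).

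The second step is to straighten $\bT_1 \cup \bT_2$ into $\bT_3$ using Alexandrov's lemma. Unlike in Lemma \ref{gluinglemma}, the shared edge $[\bp, \by]$ need not be timelike, so rather than relying on Lemma \ref{alexlem} I would invoke the more general \cite[Lemma 2.4]{AB08}, which is phrased entirely in terms of signed distance and nonnormalized angle and so imposes no causal condition on the subdividing segment. This reproduces the same angle and signed-length comparisons between $\bT_1 \cup \bT_2$ and $\bT_3$ that drove the earlier proof, independently of the causal type of $[\bp, \by]$.

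With this in hand, the heart of the argument is a case analysis on two chosen points $a, b$ on the sides of $T_3$ with comparison points $\ba, \bb$ on $\bT_3$, paralleling cases (A), (B.1), (B.2.i), (B.2.ii) and their mirrored analogues in Lemma \ref{gluinglemma}. In each case I would set up the chain
\[
\tau(a,b) \;\geq\; \tau_i(a^{(i)}, b^{(i)}) \;\geq\; \btau(\bar{a}^{(i)}, \bar{b}^{(i)}) \;\geq\; \btau(\ba, \bb),
\]
where $a^{(i)}, b^{(i)}$ are the canonical points on the sides of $T_i$ obtained by matching time separations. The first inequality comes from the reverse triangle inequality in $X$ applied with $p$ or $y$ as intermediate point; the second is the manifold curvature bound for $T_i$ in the sense of \cite{AB08}, translated via \eqref{taurep} from $|a^{(i)} b^{(i)}|_{\pm} \leq |\bar{a}^{(i)} \bar{b}^{(i)}|_{\pm}$ (the pair being timelike related); the third is the straightening step above.

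I expect the main obstacle to be the subcases analogous to (B.2.ii), where the straight segment $[\bar{a}^{(i)}, \bar{b}^{(i)}]$ exits the glued quadrilateral $\bT_1 \cup \bT_2$ and so one cannot directly invoke the curvature bound along $[a^{(i)}, b^{(i)}]$. The remedy, as in Lemma \ref{gluinglemma}, is the ODE-style argument based on Lemma \ref{help} and the reverse Cauchy--Schwarz inequality for timelike vectors, comparing the detour through $\bp$ with the direct segment in $\bT_3$. I would reproduce that computation verbatim: it is carried out entirely in $M_K$, works with signed distances, and the only sign checks concern inner products of timelike vectors constructed from $\gamma_{\ba\bp}'(0)$ and $\gamma_{\bp\bz}'(0)$, which remain timelike irrespective of the causal character of $[\bp, \by]$. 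Hence the bound $f \geq 0$ still holds, the remaining cases (where the connecting segment stays inside the quadrilateral or the points sit in a common $T_i$) work as in Lemma \ref{gluinglemma}, and the desired curvature bound on $T_3$ follows.
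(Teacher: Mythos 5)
Your proof takes the same approach as the paper, whose entire argument is the two sentences ``The proof of Lemma \ref{gluinglemma} can be adapted entirely into this setting. One easily observes that the fact that the shared segment is not timelike does not impact the comparison calculations.'' Your elaboration is consistent with that claim --- the signed-distance hypothesis realizes a shared comparison edge of the right length regardless of its causal character, and the ODE computation manipulates only timelike data --- and your choice to invoke \cite[Lemma 2.4]{AB08} for the straightening step (rather than implicitly reusing Lemma \ref{alexlem}) is a reasonable variant, since in the manifold setting the paper's objection to that reference no longer applies.
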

\begin{proof}
The proof of Lemma \ref{gluinglemma} can be adapted entirely into this setting. One easily observes that the fact that the shared segment is not timelike does not impact the comparison calculations.
\end{proof}
\begin{rem}[Gluing with reverse curvature bound]
In the metric case, the gluing theorem only works when considering spaces with an upper curvature bound. Roughly speaking, this is because in Alexandrov's lemma one angle always increases independently of whether one starts with a concave or convex quadrilateral. This in turn prevents the gluing lemma from working in the other direction.
In our case we encounter very similar behaviour: in Lemma \ref{alexlem}, the inequality $\angle xyz \geq \angle x'y'z'$ is independent of the general shape of the quadrilateral. Note that in the Lorentzian setting we want this inequality to point in the other direction compared to the two other angle estimates because of the sign of the nonnormalized angle.
\end{rem}
\begin{rem}[Hyperbolic angles in Lorentzian pre-length spaces]
The first author is currently working together with C.\ Sämann on adapting the concept of hyperbolic angles to the synthetic setting. The approach is similar to the Alexandrov angle in metric geometry, cf.\ [Definition I.1.12]\cite{BH99}. This may simplify some of the calculations in this section while also allowing nicer formulations of some statements.
\end{rem}

\section{The gluing theorem}
This section covers the proof of an analogue of the gluing theorem of Reshetnyak for CAT($k$) spaces. 
As mentioned before, a gluing theorem for Lorentzian pre-length spaces at present does not seem to be possible, at least not in a reasonably general formulation. Indeed, one would require that (locally) any two points in the identified set are timelike related, which basically forces each $A_i$ to be a timelike geodesic (or a ``discrete'' union of timelike geodesics).
The problem is essentially the missing concept of spacelike distance. Our proof idea very much follows the metric case, where one subdivides a triangle into smaller triangles and then applies Lemma \ref{gluinglemma}. 
In the Lorentzian setting, however, it may happen that such a division into $\emph{timelike}$ triangles is not always possible: consider two copies of $\R^3_1$ glued along a vertical plane. Then one can construct an arbitrarily small timelike triangle whose intersection with the plane consists of two points which are spacelike related. \\

Nevertheless, we can formulate the gluing theorem for Lorentzian manifolds viewed as Lorentzian pre-length spaces. In this setting we have access to spacelike distances and the triangle subdivision problem can be overcome. 
\subsection{Constructing comparison neighbourhoods}
One main obstacle in the proof of the gluing theorem is the construction of suitable comparison neighbourhoods. In particular, the comparison neighbourhoods of points in the glued set additionally have to be geodesically convex rather than just ensure the existence of maximizers between timelike related points. This is because when applying the gluing lemma in the spirit of Reshetnyak it might happen that the triangle subdivision occurs along a spacelike or null geodesic. Moreover, we also need them to be causally convex to ensure the existence of geodesics between the two spaces (obtained as a limit of converging curves). \\

Fortunately, in recent work by E.\ Minguzzi such neighbourhoods are constructed, cf.\ \cite{Min15}.
The basic idea is the following: 
let $M$ be a spacetime, $p \in M$ and $U \subseteq M$ a normal neighbourhood of $p$. Define the function $D_p^2:U \to \R$ by
\begin{equation}
D_p^2(q):=g_p(\exp_p^{-1}(q),\exp_p^{-1}(q)). 
\end{equation}
That is, the function $D_p^2$ is the (squared) signed distance from the point $p$ (but applying the sign after the squaring).
In the terminology of \cite{AB08}, $D^2_p(q)$ is called the energy $E_q(p)=E_p(q)$.
Let $\gamma$ be a timelike geodesic through $p$. Then from \cite[Lemma 4]{Min15} we infer the existence of $q_1, q_2$ in the image of 
$\gamma, q_1 \ll p \ll q_2$, and a normal neighbourhood $O$ of $p$ such that, 
setting $c_1:=D_{q_1}^2(p), c_2:= D_{q_2}^2(p)$ (both of which are negative), for sufficiently close $c_1'>c_1$ and $c_2'>c_2$ the set 
\begin{equation}
\label{lens1}
(D_{q_1}^2)^{-1}(-\infty,c_1') \cap (D_{q_2}^2)^{-1}(-\infty,c_2') \cap O
\end{equation}
is geodesically convex and globally hyperbolic. Moreover, in this proof the set $O$ is chosen in such a way that $\widebar{O} \subseteq I(q_1,q_2)$ and the (closure of the) desired connected component of $(D_{q_1}^2)^{-1}(-\infty,c_1') \cap (D_{q_2}^2)^{-1}(-\infty,c_2')$ is contained in $O$. Hence the set in (\ref{lens1}) is causally convex as well.
This set might be visually described as a ``lens'', i.e., the (bounded) region obtained by intersecting two hyperboloids in Minkowski space.
In our setup, this region can more easily be described by using the time separation function $\tau$ instead of $D^2_p$.
We will go into a bit more detail on how we adapt this construction in Remark \ref{lensrem1}.
\begin{rem}[Assumptions on the glued set]
\label{assumptions on A}
When viewing manifolds as Lorentzian length spaces, one has to be especially careful when dealing with submanifolds. This is because on a Lorentzian submanifold $A \subseteq M$ there are a priori two -- potentially strongly differing -- structures when viewing them as Lorentzian pre-length spaces. One the one hand, there is the induced substructure on $A$ as a Lorentzian submanifold which is very common in Lorentzian geometry. This leads to concepts such as relative causality relations, usually denoted by $\ll_A$ and $\leq_A$. In particular, a submanifold equipped with this induced structure is always a Lorentzian length space.
On the other hand, the restricted structure as a Lorentzian pre-length space just considers the restriction of the causality relations and the time separation function to the submanifold. In this case, $A$ is usually just a Lorentzian pre-length space and one will lose the original description of $\tau, \ll$ and $\leq$. For example, if $p,q \in A, p \leq q$, then clearly $p \mathrel{{\leq}|_A} q$. But if there is no causal curve from $p$ to $q$ that stays inside $A$, we do not get $p \leq_A q$. This is why the restricted Lorentzian structure finds little application from a relativistic or differential geometric viewpoint.
A subspace of a metric space is a common example where one uses the restricted structure. But also in this case, if one starts with a length space then an arbitrary subspace will only be a metric space and not a length space in general. \\

This observation is one of the reasons we decided to not require the identified sets to be submanifolds (the other of course is a pursuit of generality). We now summarize the properties we require of the identified sets.
Let $A_1$ and $A_2$ be two closed subsets of strongly causal spacetimes $M_1$ and $M_2$, respectively. Let $f: A_1 \to A_2$ be a locally bi-Lipschitz homeomorphism (we will formulate the gluing theorem without an artificial space $A$). Then we require $A_1, A_2$ and $f$ to be compatible in the following way:
\begin{itemize}
\item[(i)] $A_1$ and $A_2$ are non-timelike locally isolating.
\item[(ii)] $f$ is $\tau$-preserving and $\leq$-preserving.
\item[(iii)] $A_1$ is ``convex'' in the following sense: for all $p \in A_1$ there exists a normal neighbourhood $V_1 \subseteq M_1$ of $p$ such that the following holds: whenever $x,y \in U_1:= V_1 \cap A_1$ then the unique geodesic connecting them in $V_1$ is contained in $U_1$. The same holds for $A_2$.
\item[(iv)] $f$ locally preserves the signed distance: for all $p \in A_1$ there exists a normal neighbourhood $V_1 \subseteq M_1$ of $p$ such that the following holds: whenever $x,y \in U_1:= V_1 \cap A_1$ then $|xy|_{\pm}=|f(x)f(y)|_{\pm}$.
\end{itemize}
Observe that both (iii) and (iv) also hold for any smaller (convex) normal neighbourhood contained in $V$.
(iii) suggests that the set $A_i$ is at least contained in a totally geodesic submanifold. However, we still gain the possibility of admitting boundaries and even corners, see Figure \ref{A choices}.
\end{rem}
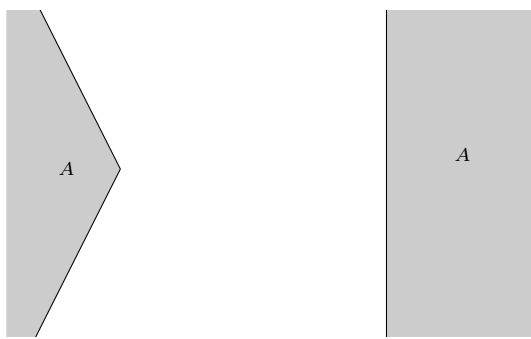
\begin{figure}
\begin{center}
\begin{tikzpicture}[line cap=round,line join=round,>=triangle 45,x=1cm,y=1cm]
\clip(-4.625244657618698,-2.217350640148503) rectangle (5.230867703908334,2.108464261205086);
\draw[fill=black!20] (-3,3) -- (-1.5,0) -- (-3,-3);

\fill[color=black!20] (2,-3) -- (2,3) -- (4,3) -- (4,-3);
\draw (2,-3) -- (2,3);
\draw (4,-3) -- (4,3);
\begin{scriptsize}
\coordinate [label=180: {$A$}] (A1) at (-2,0);
\coordinate [label=90: {$A$}] (A1) at (3,0);
\end{scriptsize}
\end{tikzpicture}
\end{center}
\caption{Two possible choices for $A$ in the Minkowski plane: a ``half-space'' with cornered boundary and a vertical strip.}
\label{A choices}
\end{figure}
This is also the reason why we do not exactly copy the construction mentioned in \cite[Lemma 4]{Min15}, because there the sets in (\ref{lens1}) have both ``control points'' on a geodesic through $p$. And if $p$ is a boundary point of $A$ there might not be a geodesic \emph{through} $p$ that stays inside $A$. 
\begin{rem}[Modified lenses]
\label{lensrem1}
Our modification of the sets in (\ref{lens1}) is very minor. 
Let $p \in A \subseteq M$ be as in Remark \ref{assumptions on A}. 
Choose normal coordinates $(\varphi, U)$ centered at $p$. In the chart neighbourhood $\varphi(U)$, let $B_S(0)$ be a Euclidean ball around $0=\varphi(p)$ such that $\varphi^{-1}(B_S(0))$ is convex (this is equivalent to saying that $B_S(0)$ is convex with respect to the push forward metric induced by $g$) and moreover such that for all smaller balls entirely contained in $B_S(0)$, their (inverse) image is convex as well.
By the non-local timelike isolation of $A$ we find $b_-,b_+ \in B_S(0) \cap \varphi(A)$ which are $\tau^{\eta}$-equidistant
\footnote{As the time separation is defined as the supremum of lengths of causal curves, it clearly is dependent on which metric is considered. We denote by $\tau^g$ the time separation measured with respect to the metric $g$.
In the same spirit, we can define the signed distance with respect to a certain metric, denoted by $| \cdot \cdot|_{\pm}^g$, which is the $g$-length of the unique $g$-geodesic connecting two points (of course the two points have to be in a normal neighbourhood with respect to $g$).} 
(and hence, as $p$ is the center of the normal coordinate chart, also $\tau^g$-equidistant) from $0$ such that $b_- \ll 0 \ll b_+$. 
Consider the timelike straight line segments $[b_-,0], [0,b_+]$ in $B_S(0)$ (which are actually contained in $\varphi(A)$ as well by the convexity of $A$ and moreover correspond to the geodesic segments in the manifold). These segments intersect at $0$ in an ordinary Minkowski hyperbolic angle of $\omega < \infty$ (which is also the hyperbolic angle measured in the manifold). 
Apply a Lorentz transformation if necessary to position these segments in the plane spanned by $\partial_0, \partial_1$ and such that they are symmetric (from a Euclidean point of view) with respect to the $\partial_1$ direction. Then the segment $[b_-,b_+]$ is parallel to the $\partial_0$-axis. 
If necessary, choose $S$ even smaller such that the ball is still convex after the Lorentz transformation.
Introduce a flat metric in the chart neighbourhood via
\begin{equation}
\label{flatmetric1}
\eta^+ := - (2dx_0)^2 + \sum_{i=1}^n (dx_i)^2.
\end{equation}
Clearly, $g_p =\eta < \eta^+$\footnote{For two Lorentzian metrics $g$ and $h$ on a manifold we say $h$ has wider lightcones than $g$, and write $g<h$, if for all points the timelike past/future with respect to $h$ contains the causal past/future with respect to $g$, cf.\ \cite{Min15}. In other sources, one may find the notation $g \prec h$ and the formulation $g(v,v) \leq 0 \Rightarrow h(v,v)<0$ for all $v \in TM$.} and by choosing $S$ smaller if necessary we can assume that $g < \eta^+$ holds on all of $B_S(0)$. 
Set $R:=\tau^{\eta^+}(b_-,b_+)$. 
Consider the following set:
\begin{equation}
\label{lenseq1}
L:=\{x \in B_S(0) \mid \tau^{\eta^+}(b_-,x) > \frac{R}{3}, \tau^{\eta^+}(x,b_+) > \frac{R}{3} \},
\end{equation}
which will be our replacement for the set in (\ref{lens1}).
Visually, this set may be described as a ``wide lens''.
Using the law of cosines, an elementary calculation yields $0 \in L$ independent of $\omega$ and $R$, see Figure \ref{lens nhoods}.
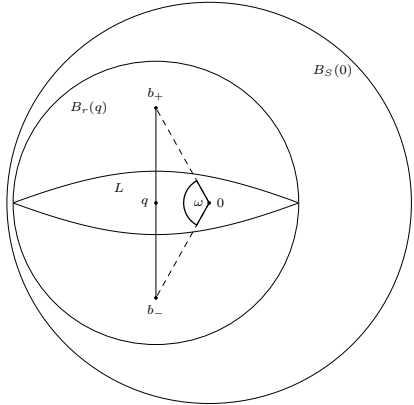
\begin{figure}
\begin{center}
\scalebox{0.7}{
\begin{tikzpicture}[line cap=round,line join=round,>=triangle 45,x=1cm,y=1cm]
\clip(-6.420459694803359,-4.268735865468322) rectangle (5.077546218513822,4.562449321445016);
\draw [shift={(0,0)},line width=0.7pt] (0,0) -- (119.05460409907714:0.479725964359997) arc (119.05460409907714:240.94539590092285:0.479725964359997) -- cycle;
\draw [line width=0.5pt] (-1,1.8)-- (-1,-1.8);
\draw [samples=50,domain=-0.446:0.446,rotate around={90:(-1,-1.8)},xshift=-1cm,yshift=-1.8cm,line width=0.5pt] plot ({1.2*(1+(\x)^2)/(1-(\x)^2)},{2.4*2*(\x)/(1-(\x)^2)});
\draw [samples=50,domain=-0.446:0.446,rotate around={90:(-1,1.8)},xshift=-1cm,yshift=1.8cm,line width=0.5pt] plot ({1.2*(-1-(\x)^2)/(1-(\x)^2)},{2.4*(-2)*(\x)/(1-(\x)^2)});
\draw [line width=0.5pt] (-1,0) circle (2.6832815729997477cm);
\draw [line width=0.5pt] (0,0) circle (3.8cm);
\draw [dashed] (0,0) -- (-1,1.8);
\draw [dashed] (0,0) -- (-1,-1.8);
\begin{scriptsize}
\coordinate [circle, fill=black, inner sep=0.7pt, label=0: {$0$}] (A1) at (0,0);
\coordinate [circle, fill=black, inner sep=0.7pt, label=90: {$b_+$}] (A1) at (-1,1.8);
\coordinate [circle, fill=black, inner sep=0.7pt, label=270: {$b_-$}] (A1) at (-1,-1.8);
\coordinate [circle, fill=black, inner sep=0.7pt, label=180: {$q$}] (A1) at (-1,0);
\coordinate [label=180: {$\omega$}] (A1) at (-0,0);
\coordinate [label=180: {$L$}] (A1) at (-1.5,0.3);
\coordinate [label=180: {$B_r(q)$}] (A1) at (-1.8,1.8);
\coordinate [label=180: {$B_S(0)$}] (A1) at (2.8,2.5);
\end{scriptsize}
\end{tikzpicture}
}
\end{center}
\caption{Constructing nice neighbourhoods in the spirit of \cite{Min15}.}
\label{lens nhoods}
\end{figure}
Say $b_-:=\gamma_1(t), b_+=\gamma_2(t)$, where $\gamma_1$ and $\gamma_2$ are the (unit speed) geodesics corresponding to the extensions of the straight line segments from above. By choosing $t$ smaller, i.e., by moving $b_-$ and $b_+$ closer to $0$ along these segments (but still keeping them equidistant to $0$), it easily follows that $R$ decreases as well. 
Moreover, $L$ has a ``width'' which is only dependent on $R$ and can be easily calculated. Denote by $q$ the midpoint of $[b_-,b_+]$, which due to the above applied Lorentz transformation is situated on the $\partial_1$-axis. Let $r$ be half the width of $L$. By choosing $t$ small enough, we can achieve that $B_r(q) \subseteq B_S(0)$. 
In particular, $L \subseteq B_r(q)$ anyways holds and $\varphi^{-1}(B_r(q))$ is geodesically convex. With arguments very similar to \cite[Theorem 10, Theorem 11 \& Lemma 4]{Min15} one can then show that $\varphi^{-1}(L)$ is geodesically convex, causally convex and globally hyperbolic.
\end{rem}

\subsection{Formulation and proof of the gluing theorem}
Now we can finally state and prove the Lorentzian analogue of the gluing theorem.
\begin{thm}[Reshetnyak's gluing theorem, Lorentzian version]
Let $(X_1,g_1)$ and $(X_2,g_2)$ be two smooth and strongly causal spacetimes with $\dim(X_1)=:n \geq m := \dim(X_2)$. Let $A_1$ and $A_2$ be two closed non-timelike locally isolating subsets of $X_1$ and $X_2$, respectively. 
Let $f:A_1 \rar A_2$ be a $\tau$-preserving and $\leq$-preserving locally bi-Lipschitz homeomorphism which locally preserves the signed distance. Suppose $A_1$ and $A_2$ are convex in the sense of Remark \ref{assumptions on A}(iii).
Suppose $X_1$ and $X_2$ have (sectional) curvature bounded above by $K \in \R$ in the sense of \cite{AB08}. Then the Lorentzian amalgamation $X:= X_1 \sqcup_A X_2$
\footnote{Technically, in this formulation the space $A$ does not exist. As it is still convenient in the proof to say when an equivalence class consists of two elements, we keep this notation. One can simply define $A:=\pi(A_1 \sqcup A_2)$.} 
is a Lorentzian pre-length space with timelike curvature bounded above by $K$. 
\end{thm}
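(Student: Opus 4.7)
The fact that $X = X_1 \sqcup_A X_2$ is a Lorentzian pre-length space has already been established in Section~3, so the task is to produce a comparison neighborhood in the sense of Definition~\ref{tl curv bounds}, with upper bound $K$, around every $[p] \in X$. For $[p] \in X_i \setminus A$ I would take a geodesically convex normal neighborhood of $p^i$ whose closure misses $A_i$ (possible since $A_i$ is closed); it embeds $\tau$- and $\leq$-faithfully into $X$, so every timelike triangle inside it is a genuine manifold triangle in $X_i$, for which the conclusion follows directly from the sectional-curvature hypothesis and~\cite{AB08}.

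Around a point $[p] \in A$ I would pick compatible lens neighborhoods $L_1 \ni p^1$ and $L_2 \ni p^2$ as in Remark~\ref{lensrem1}, shrunk enough that $f$ restricts to a homeomorphism between $L_1 \cap A_1$ and $L_2 \cap A_2$ and that each $L_i$ is geodesically convex, causally convex, and globally hyperbolic. Setting $U := \pi(L_1 \sqcup L_2)$, the analytic conditions in Definition~\ref{tl curv bounds}(i),(ii) are to be verified through Proposition~\ref{reptimesep}: finiteness and continuity of $\ttau$ on $U \times U$ reduce to the same for $\tau_1, \tau_2$ on the lenses, plus compactness of $J_X([x],[y]) \cap A \cap U$, which also yields attainment of the sup by some $[a] \in A$; the $\ttau$-realizing curve in a mixed-space pair is then the concatenation of the two manifold maximizers meeting at that crossing.

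For the comparison inequality itself I would work by cases on how the vertices $[x],[y],[z]$ distribute across $X_1, X_2, A$. If all three come from the same $X_i$, Proposition~\ref{reptimesep} keeps every realizer inside $X_i$ and the triangle is a manifold triangle with the bound already guaranteed. In genuinely mixed cases, at least one side crosses $A$ at a point produced by the sup in Proposition~\ref{reptimesep}; marking those crossings yields a subdivision of $\Delta([x],[y],[z])$ into at most three subtriangles, each with vertices in only one $X_i$. In the prototypical configuration $x, y \in X_1$ and $z \in X_2$, these pieces are $\Delta([x],[a],[y])$ and $\Delta([a],[y],[b])$ inside $X_1$ and $\Delta([a],[b],[z])$ inside $X_2$, with $[a] \in [x,z] \cap A$ and $[b] \in [y,z] \cap A$. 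Convexity of $A_i$ keeps $[a^i,b^i] \subseteq A_i$ in both spaces, and local signed-distance preservation forces $|a^1 b^1|_{\pm} = |a^2 b^2|_{\pm}$, so Lemma~\ref{mfgluinglemma} combines the two pieces sharing the possibly non-timelike segment $[a,b]$ to give the bound for $\Delta([a],[y],[z])$. A final application of Lemma~\ref{gluinglemma} along the timelike segment $[a,y]$ then yields the bound for $\Delta([x],[y],[z])$, and the remaining vertex distributions reduce to this one by relabeling.

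The main obstacle I foresee is the geometric bookkeeping of the subdivision combined with the strict hypothesis of Lemma~\ref{gluinglemma} that the inserted vertex be timelike related to the opposite vertex. A generic crossing $[a]$ does satisfy $[a] \ll [y]$ or $[y] \ll [a]$, but the borderline configurations are not automatic and will either have to be excluded by a limiting argument or handled entirely through the more flexible Lemma~\ref{mfgluinglemma}. One also has to ensure that every crossing, every subdivided edge, and every $\ttau$-realizer used in the argument stays inside the single lens neighborhood $U$; this is precisely what the convexity of $L_i$ and $L_i \cap A_i$ in Remark~\ref{lensrem1} is designed to guarantee, and it is what makes the lens construction indispensable to the proof.
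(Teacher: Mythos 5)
Your overall plan matches the paper's: construct lens comparison neighborhoods from Remark~\ref{lensrem1} around points of $A$, verify (i),(ii) of Definition~\ref{tl curv bounds} via Proposition~\ref{reptimesep} and the causal/geodesic convexity and global hyperbolicity of the lenses, then establish (iii) by subdividing a mixed triangle and invoking Lemmas~\ref{gluinglemma} and~\ref{mfgluinglemma}. The gap is in the subdivision itself, and it is the very one you flag at the end but do not resolve.

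In your configuration $x,y\in X_1$, $z\in X_2$, with crossings $[a]\in[x,z]\cap A$ and $[b]\in[y,z]\cap A$, your decomposition introduces the segment $[a,y]$ as a gluing edge. Both of your two gluing steps need the triangle $\Delta([a],[y],[z])$ (respectively $\Delta([x],[a],[y])$, $\Delta([a],[y],[b])$) to be a \emph{timelike} triangle: Lemma~\ref{gluinglemma} requires the subdivision point to be timelike related to the opposite vertex, and Lemma~\ref{mfgluinglemma} still requires the \emph{combined} triangle $T_3$ to be timelike. But $[a]$ lies on the geodesic $[x,z]$ and $[y]$ is an independent vertex; these are generically spacelike related, even for arbitrarily small triangles (this is easy to see already in $\R^2_1$). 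So the obstruction is not a borderline phenomenon, it fails on an open set of configurations, and neither a limiting argument nor a retreat to Lemma~\ref{mfgluinglemma} repairs it, because the issue is with the causal character of the \emph{glued} triangle, not of the shared segment.

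The paper's decomposition is chosen precisely to avoid introducing any new segment whose causal character is uncertain. In your labels, it first glues $\Delta(x^1,a^1,b^1)\subseteq X_1$ and $\Delta(a^2,b^2,z^2)\subseteq X_2$ along the possibly non-timelike chord $[a,b]$ using Lemma~\ref{mfgluinglemma} (this is valid: the big triangle is $\Delta([x],[b],[z])$, the subdivision point $[a]$ lies on $[x,z]$, and $[x]\ll[y]\ll[b]\ll[z]$ are automatic). It then glues $\Delta([x],[y],[b])\subseteq X_1$ and $\Delta([x],[b],[z])$ along $[x,b]$ via Lemma~\ref{gluinglemma}; the inserted vertex $[b]$ lies on $[y,z]$, so $[x]\ll[b]$ holds automatically. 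Every triangle appearing is timelike because the only ``new'' vertex used, $[b]$, lies on a side of the original triangle. Your intermediate vertex $[a]$ is used against $[y]$, across the triangle, which is what breaks.

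Separately, the case in which the \emph{middle} vertex is isolated (say $x,z\in X_1$, $y\in X_2$) is not a relabeling of yours and needs its own, simpler, argument: here the two crossings lie on $[x,y]$ and $[y,z]$, hence are automatically timelike related to each other (both being chronologically between $[x]$ and $[z]$ through $[y]$), and Lemma~\ref{gluinglemma} alone suffices for both gluing steps. You should treat it explicitly rather than subsume it under relabeling.
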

\begin{proof}
By \cite[Proposition 3.5, Example 3.24, Theorem 3.26 \& Example 4.9]{KS18}, $X_1$ and $X_2$ are strongly causal regular (SR)-localizable Lorentzian length spaces with timelike curvature bounded above by $K$ in the sense of \cite{KS18}.
Let $[p] \in X$ and assume first $[p]=\{p^1\}$. 
We can choose comparison neighbourhoods in $X_1$ (and in $X_2$) to be (small enough) timelike diamonds, cf.\ \cite[Remark 2.2.12]{Ber20}. 
Since $A_1$ is closed, we find a neighbourhood $U_1 \subseteq X_1$ of $p^1$ which does not meet $A_1$. Since $X_1$ is strongly causal, there exists a timelike diamond $I_1(x^1,y^1)$ in $U_1$ containing $p^1$. In particular, this timelike diamond does not meet $A_1$ and hence $I_X([x],[y])=\pi(I_1(x^1,y^1))$ by Lemma \ref{amalgamated diamonds}.
Since $\ttau$ restricts to $\tau_1$ on $X_1$ by Proposition \ref{reptimesep}, it easily follows that $I_X([x],[y])$ is a comparison neighbourhood for $[p] \in X$. 
Thus, we only have to further investigate points in $A$. \\

Let $p^1 \in A_1$. 
Choose normal coordinates $(\varphi_1,V_1)$ around $p_1$ in $X_1$ 
such that $V_1$ is globally hyperbolic, convex in the sense of Remark \ref{assumptions on A}(iii) and $f$ preserves the signed distance on $V_1 \cap A_1 =:U_1$. We can further assume that $V_1$ is contained in a comparison neighbourhood and a causally closed neighbourhood of $p^1$ in $X_1$ and all small enough balls inside $V_1$ are geodesically convex as well.
Moreover, by choosing $V_1$ smaller if necessary, we can also assume that $f(U_1):=U_2$ is of the form $U_2=V_2 \cap A_2$ where $V_2$ has all the properties we imposed on $V_1$. In particular, $(\varphi_2,V_2)$ are normal coordinates around $p^2$ in $X_2$.
As $A_1$ is non-timelike locally isolating, we find $b_-^1,b_+^1 \in U_1$ which we choose $\tau^{g_1}$-equidistant from $p^1$ such that $b_-^1 \ll_1 p^1 \ll_1 b_+^1$. Since $f$ is $\tau$-preserving and hence $\ll$-preserving, we have $b_-^2,b_+^2 \in U_2$ as well as $b_-^2 \ll_2 p^2 \ll_2 b_+^2$ (and they are $\tau^{g_2}$-equidistant from $p^2$). 
By applying a Lorentz transformation, we can assume $\varphi_1(p^1)=0=\varphi_2(p^2), \varphi_1(b_-^1)=\varphi_2(b_-^2)=:b_-, \varphi_1(b_+^1)=\varphi_2(b_+^2)=:b_+$ and the straight line segments $[b_-,0]$ and $[0,b_+]$ lie in the plane spanned by $\partial_0,\partial_1$ and $b_-$ and $b_+$ are symmetric with respect to the $\partial_1$ direction (as in Remark \ref{lensrem1}).
Now consider a set as the one described in (\ref{lenseq1}) and recall the metric defined in (\ref{flatmetric1}). Set $R:= \tau^{\eta^+}(b_-,b_+)$. That is, define the set
\begin{equation}
\widetilde{L}:= \{ x \in \R^n \mid \tau^{\eta^+}(b_-,x) > \frac{R}{3}, \tau^{\eta^+}(x,b_+) > \frac{R}{3} \}
\end{equation}
and set $L_i:=\varphi_i^{-1}(\widetilde{L})$
\footnote{
If the dimension of $X_2$ is lower, one may want to view $\varphi_2$ as embedding $\R^m$ into $\R^n$, i.e., $\varphi_2 : V_2 \to \R^m \times \{0\} \subseteq \R^n$.
}. 
By the observations in Remark \ref{lensrem1}, we have $0 \in \widetilde{L}$ and hence $p^i \in L_i$.
In particular, by moving $b_-$ and $b_+$ closer to $0$ along these segments, we can achieve that 
both $L_1$ and $L_2$ have the desired properties of geodesic convexity, causal convexity and global hyperbolicity. 
While we can make no statement about the equality of $L_1$ and $L_2$ as they belong to different manifolds which we cannot relate as a whole, we indeed can say something about their intersections with $A_i$! Namely, we claim
$f(L_1 \cap A_1) = L_2 \cap A_2$. This fact is absolutely essential so that the neighbourhoods of $[p]$ coming from $X_1$ and $X_2$, respectively, are compatible.
To this end observe that as we are in normal coordinates around $p^i$ and $f$ is signed distance preserving, we have
\begin{equation}
|0 \varphi^1(a^1)|^{\eta}_{\pm} = 
|p^1a^1|^{g_1}_{\pm} = |p^2a^2|^{g_2}_{\pm} = 
|0 \varphi^2(a^2)|^{\eta}_{\pm}.
\end{equation}
This in turn yields an equality on the nonnormalized angles:
\begin{equation}
\angle b_- 0 \varphi_1(a^1) = \angle b_- 0 \varphi_2(a^2).
\end{equation}
Thus, the $\eta$-triangles $\Delta(b_-,0,\varphi_i(a^i)), i=1,2$ have two sides of equal length and an equal angle between these sides.
Then the law of cosines implies that the opposite side is equal as well, i.e., $|b_- \varphi^1(a^1)|^{\eta}_{\pm}=|b_- \varphi^2(a^2)|^{\eta}_{\pm}$.
An analogous argument gives
$|\varphi^1(a^1) b_+|^{\eta}_{\pm}=|\varphi^2(a^2) b_+|^{\eta}_{\pm}$.
Thus, $\varphi_1(a^1)$ and $\varphi_2(a^2)$ both lie on the intersection of two
lightcones and/or hyperboloids with respect to $\eta$. 
In any case this intersection is a Euclidean sphere of two dimensions less than the manifold (if the manifolds do not have the same dimension, it is two less than the lower dimensional manifold).
Moreover, the center of this sphere is located on $[b_-,b_+]$ and thus, as $[b_-,b_+]$ is parallel to $\partial_0$, all points on this sphere have the same time coordinate.
Because of the equality on the signed distance we obtain 
$\eta(b_- - a^1, b_- - a^1)=\eta(b_- - a^2, b_- - a^2)$. 
This, together with the fact that the first components of $a^1$ and $a^2$ are equal, implies 
$\eta^+(b_- - a^1, b_- - a^1)=\eta^+(b_- - a^2, b_- - a^2)$
and hence
$\tau^{\eta^+}(b_-,a^1) = \tau^{\eta^+}(b_-,a^2)$.
In particular, we have $\varphi_1(a^1) \in \widetilde{L}$ if and only if $\varphi_2(a^2) \in \widetilde{L}$ and so $a^1 \in L_1 \cap A_1$ if and only if $a^2 \in L_2 \cap A_2$.
We claim that $L:=\pi(L_1 \sqcup L_2)$ is a comparison neighbourhood for $[p] \in A$. \\

We first show that $L$ is timelike geodesic, i.e., for all $[x],[y] \in L$ with $[x] \tll [y]$ there exists a $\ttau$-realizing causal curve from $[x]$ to $[y]$ contained in $L$. 
To this end we claim the following: Let $\{x^1\}=[x] \in \pi(L_1) \subseteq L$ and $\{y^2\} = [y] \in \pi(L_2) \subseteq L$. Then $J_X([x],[y]) \cap A \subseteq L \cap A$. To see this, let $[q] \in J_X([x],[y]) \cap A$. Then $[q]=\{q^1,q^2\}$. 
Since $[x] \tleq [q]$, we find a chain of the form $x^1 \leq_1 a_1^1 \sim a_1^2 \leq_2 a_2^2 \sim a_2^1 \leq_1 \ldots \sim a_n^1 \leq_1 q^1$. Since $f$ is $\leq$-preserving, we obtain $x^1 \leq_1 q^1$ by the transitivity of $\leq_1$.
Since $g_i < \eta^+$, we get $\varphi_1(x^1) \ll^{\eta^+} \varphi_1(q^1)$. 
Clearly, $b_- \ll^{\eta^+} \varphi_1(x^1)$ as $x^1 \in L_1$. Then by the reverse triangle inequality we obtain $\tau^{\eta^+}(b_-,\varphi_1(q^1)) \geq \tau^{\eta^+}(b_-,x) > \frac{R}{3}$.
A similar argument implies $\tau^{\eta^+}(\varphi_2(q^2),b_+)\geq \tau^{\eta^+}(\varphi_2(y^2),b_+)>\frac{R}{3}$. 
By the above arguments, we obtain both inequalities for $\varphi_1(q^1)$ and $\varphi_2(q^2)$. Thus, these points are contained in $\widetilde{L}$ and so $q^i \in L_i \cap A_i$ and $[q] \in L \cap A$ follows. \\

Now let $[x],[y] \in L$ with $[x] \tll [y]$. If both belong to one space, we take the (projection of the) original geodesic connecting them. More precisely, if $x^i \in [x], y^i \in [y], i \in \{1,2\}$, then $\pi \circ \gamma_{x^i y^i}$ is a $\ttau$-realizing curve connecting $[x]$ and $[y]$ (since $\ttau$ restricts to $\tau_i$ on $X_i$). It is even contained in $\pi(L_i)$ as $L_i$ is causally convex.
So the only relevant case is (up to symmetry) $\{x^1\}=[x]$ and $\{y^2\} = [y]$ with $[x] \tll [y]$. By Proposition \ref{reptimesep} we find a sequence $([a_n])_{n \in \N}$ such that 
\begin{equation}
\lim_{n \to \infty} \tau_1(x^1,a_n^1) + \tau_2(a_n^2,y^2) = \ttau([x],[y]).
\end{equation}
By definition, we have $[a_n] \in J_X([x],[y]) \cap A$ for all $n$. By the above considerations, it then follows that $[a_n] \in L \cap A$ for all $n$. In particular, we find corresponding sequences $(a_n^1)_{n \in \N}$ in $L_1 \cap A_1$ and $(a_n^2)_{n \in \N}$ in $L_2 \cap A_2$. By construction, we have $L_1 \subseteq J_1(b_-^1,b_+^1)$ and $L_2 \subseteq J_2(b_-^2,b_+^2)$. 
Since $V_1$ and $V_2$ are globally hyperbolic and $A_1$ and $A_2$ are closed, we find that these sequences converge to some $a^1 \in A_1$ and $a^2 \in A_2$. In particular, $[a_n] \to [a] \in \overline{J_X([x],[y])} \cap A$.
As $[a_n] \in J_X([x],[y]) \cap A$, we have $x^1 \leq_1 a_n^1$ and $a_n^2 \leq_2 y^2$ for all $n$. Since $V_1$ and $V_2$ are contained in a causally closed neighbourhood, it follows that $x^1 \leq_1 a^1$ and $a^2 \leq_2 y^2$ and hence $[a] \in J_X([x],[y]) \cap A$.
In summary, we have
\begin{align*}
\ttau([x],[y]) & = 
\lim_{n \to \infty} \tau_1(x^1,a_n^1) + \tau_2(a_n^2,y^2) =
\lim_{n \to \infty} \tau_1(x^1,a_n^1) + \lim_{n \to \infty} \tau_2(a_n^2,y^2) \\
& = \tau_1(x^1,a^1) + \tau_2(a^2,y^2), 
\end{align*}
where the last equality follows since $\tau_1$ and $\tau_2$ are continuous on $L_1$ and $L_2$, respectively. Consider the two original geodesics from $x^1$ to $a^1$ and from $a^2$ to $y^2$ which are contained in $L_1$ and $L_2$, respectively, as these sets are causally convex. Then the concatenation of their projections is a $\ttau$-realizing curve from $[x]$ to $[y]$ contained in $L$. \\

The fact that $\ttau|_{L \times L}$ is finite and continuous easily follows from $L$ being timelike geodesic: if $x^i \in [x], y^i \in [y]$, then $\ttau([x],[y]) = \tau_i(x^i,y^i)$ by Proposition \ref{reptimesep}. By assumption, this is finite and $\tau_i$ is continuous. 
So again, the case we have to investigate further is (up to symmetry) $\{x^1\}=[x]$ and $\{y^2\} = [y]$. If $\ttau([x],[y])>0$, then by the above there exists $[a] \in A$ such that $\tau_1(x^1,a^1) + \tau_2(a^2,y^2) = \ttau([x],[y])$. The left hand side is finite by assumption. \\

As for the continuity of $\ttau$, note that $\ttau$ is lower semi-continuous by definition.
To see that $\ttau$ is upper semi-continuous in $([x],[y])$, let $[x_n] \to [x]$ and $[y_n] \to [y]$ be two sequences in $L$. We want to show $\ttau([x],[y]) \geq \limsup \ttau([x_n],[y_n])$. 
If $\limsup \ttau([x_n],[y_n]) = 0$, there is nothing to show. 
Otherwise (at least for a subsequence converging to the $\limsup$) we have $[x_n] \tll [y_n]$ for large enough $n$. 
In this case let $[a_n] \in A$ be such that $\tau_1(x_n^1,a_n^1)+\tau_2(a_n^2,y_n^2)=\ttau([x_n],[y_n])$, which exists since $L$ is timelike geodesic. 
Similar to the above arguments, it follows that $a_n^1 \in  L_1 \cap A_1$ and $a_n^2 \in L_2 \cap A_2$ (here we have $[a_n] \in J_X([x_n],[y_n])$, otherwise the argument is the same). By global hyperbolicity of $V_1$ and $V_2$ we infer the existence of a convergent subsequence. Without loss of generality the whole sequence converges, say $[a_n] \to [a]$. Then we compute
\begin{align*}
\ttau([x],[y]) & \geq \tau_1(x^1,a^1)+\tau_2(a^2,y^2) = 
\lim_{n \rar \infty} \tau_1(x_n^1,a_n^1) + \lim_{n \rar \infty} \tau_2(a_n^2,y_n^2) \\
& = \lim_{n \rar \infty} \tau_1(x_n^1,a_n^1) + \tau_2(a_n^2,y_n^2) = 
\lim_{n \rar \infty} \ttau([x_n],[y_n]), 
\end{align*}
where the first equality holds since $\tau_1$ and $\tau_2$ are (locally) continuous. \\

So it is only left to show the triangle comparison condition. 
To this end consider a timelike triangle $T_1:=\Delta([x],[y],[z])$ in $L$. 
Clearly, if all three points lie in a single space, then the triangle satisfies the curvature bound by assumption. So we only need to consider triangles passing through $A$, for which there are two possibilities. Either an endpoint ($[x]$ or $[z]$) of the triangle is isolated, i.e., one endpoint is in one space while the other two points are in the other space. Or the intermediate detour-point ($[y]$) is isolated, see Figure \ref{tr config}. \\

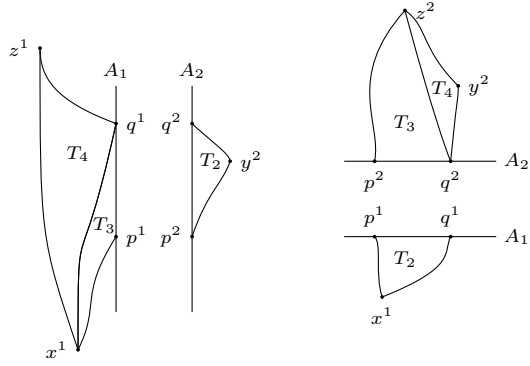
\begin{figure}
\begin{center}
\begin{tikzpicture}
\draw (0,0) -- (0,3);
\draw (1,0) -- (1,3);

\draw (-0.5,-0.5) .. controls (-0.2,0.1) and (-0.5,0.2) .. (0,1);
\draw (1,1) .. controls (1.2,1.6) and (1.4,1.7) .. (1.5,2);
\draw (1.5,2) .. controls (1.5,2.1) and (1.2,2.3) .. (1,2.5);
\draw (0,2.5) .. controls (-0.8,2.8) and (-1,3.1) .. (-1,3.5);
\draw (-0.5,-0.5) .. controls (-1,1) and (-1,1) .. (-1,3.5);

\draw (-0.5,-0.5) .. controls (-0.5,1.5) and (-0.5,0.2) .. (0,2.5);

\begin{scriptsize}
\coordinate [label=90: {$A_1$}] (A1) at (0,3);
\coordinate [label=90: {$A_2$}] (A1) at (1,3);

\coordinate [circle, fill=black, inner sep=0.5pt, label=180: {$x^1$}] (A1) at (-0.5,-0.5);
\coordinate [circle, fill=black, inner sep=0.5pt, label=0: {$p^1$}] (A1) at (0,1);
\coordinate [circle, fill=black, inner sep=0.5pt, label=180: {$p^2$}] (A1) at (1,1);
\coordinate [circle, fill=black, inner sep=0.5pt, label=0: {$q^1$}] (A1) at (0,2.5);
\coordinate [circle, fill=black, inner sep=0.5pt, label=180: {$q^2$}] (A1) at (1,2.5);

\coordinate [circle, fill=black, inner sep=0.5pt, label=0: {$y^2$}] (A1) at (1.5,2);
\coordinate [circle, fill=black, inner sep=0.5pt, label=180: {$z^1$}] (A1) at (-1,3.5);

\coordinate [label=270: {$T_4$}] (A1) at (-0.5,2.3);
\coordinate [label=170: {$T_2$}] (A1) at (1.5,1.8);
\coordinate [label=170: {$T_3$}] (A1) at (0.1,0.95);
\end{scriptsize}
\draw (3,1) -- (5,1);
\draw (3,2) -- (5,2);

\draw (3.5,0.2) .. controls (4.5,0.6) and (4.3,0.8) .. (4.4,1);
\draw (4.4,2) .. controls (4.5,3) and (4.5,2.8) .. (4.5,3);
\draw (3.5,0.2) .. controls (3.4,0.5) and (3.5,0.9) .. (3.4,1);
\draw (3.4,2) .. controls (3.5,2.5) and (3,3.1) .. (3.8,4);
\draw (4.5,3) .. controls (4,3.5) and (4.1,3.8) .. (3.8,4);
\draw (4.4,2) .. controls (4.2,2.5) and (3.85,3.8) .. (3.8,4);

\draw (-0.5,-0.5) .. controls (-0.5,1.5) and (-0.5,0.2) .. (0,2.5);

\begin{scriptsize}
\coordinate [label=0: {$A_1$}] (A1) at (5,1);
\coordinate [label=0: {$A_2$}] (A1) at (5,2);

\coordinate [circle, fill=black, inner sep=0.5pt, label=270: {$x^1$}] (A1) at (3.5,0.2);
\coordinate [circle, fill=black, inner sep=0.5pt, label=90: {$q^1$}] (A1) at (4.4,1);
\coordinate [circle, fill=black, inner sep=0.5pt, label=270: {$q^2$}] (A1) at (4.4,2);
\coordinate [circle, fill=black, inner sep=0.5pt, label=90: {$p^1$}] (A1) at (3.4,1);
\coordinate [circle, fill=black, inner sep=0.5pt, label=270: {$p^2$}] (A1) at (3.4,2);

\coordinate [circle, fill=black, inner sep=0.5pt, label=0: {$y^2$}] (A1) at (4.5,3);
\coordinate [circle, fill=black, inner sep=0.5pt, label=0: {$z^2$}] (A1) at (3.8,4);

\coordinate [label=90: {$T_2$}] (A1) at (3.8,0.5);
\coordinate [label=170: {$T_4$}] (A1) at (4.55,2.75);
\coordinate [label=90: {$T_3$}] (A1) at (3.8,2.3);
\end{scriptsize}
\end{tikzpicture}
\end{center}
\caption{The two cases of different triangle configurations.}
\label{tr config}
\end{figure}
The case where $[y]$ is isolated, say $[x],[z] \in X_1, [y] \in X_2$, is easily finished: indeed, choose arbitrary points $[p] \in [[x],[y]] \cap A, [q] \in [[y],[z]] \cap A$. Then $[p],[q] \in L \cap A$. 
Since $[x] \tll [p] \tll [y] \tll [q] \tll [z]$ and $L$ is timelike geodesic, we find a $\ttau$-realizing (causal) curve from $[p]$ to $[q]$ entirely contained in $L$.
We can divide $T_1$ into three smaller timelike triangles. Consider $T_2:=\Delta([p],[y],[q]), T_3:=\Delta([x],[p],[q])$ and $T_4:= \Delta([x],[q],[z])$. 
By the convexity of $A$ in the sense of Remark \ref{assumptions on A}(iii), we have $[[p],[q]] \subseteq L \cap A$. 
Thus, $T_2 \subseteq X_2$ and $T_3 \subseteq X_1$ and so both timelike triangles satisfy the curvature bound. Then by Lemma \ref{gluinglemma} also the bigger triangle $T_5:=\Delta([x],[y],[q])$ formed by $T_2$ and $T_3$ satisfies the same curvature bound. Applying the gluing lemma once more to the triangles $T_4$ and $T_5$ we obtain the desired curvature bound for the whole triangle $T_1$ and this case is finished. \\

Finally, we consider the case where an endpoint is isolated, say $[x] \in X_1, [y],[z] \in X_2$ (the case of $[z]$ being isolated is clearly symmetric). 
Choose points $[p] \in [[x],[z]] \cap A, [q] \in [[x],[y]] \cap A$. Then $[p],[q] \in L \cap A$. 
Since $L_1$ and $L_2$ are convex in the sense of Remark \ref{assumptions on A}(iii), there is a unique geodesic from $p^1$ to $q^1$ in $L_1 \cap A_1$ and a unique geodesic from $p^2$ to $q^2$ in $L_2 \cap A_2$, respectively. 
Moreover, these geodesics correspond to each other under $f$ and have the same signed length since $f$ preserves the signed distance. 
We denote this image in $X$ by $[[p],[q]]$ although it may not be a causal curve if $[p]$ and $[q]$ are not causally related.
By construction, we can identify the ``subtriangles'' $T_2:=\Delta([x],[p],[q])$ with $\Delta(x^1,p^1,q^1)$ and $T_3:=\Delta([p],[q],[z])$ with $\Delta(p^2,q^2,z^2)$.
In this way we can view $T_2$ and $T_3$ as triangles in the manifolds $X_1$ and $X_2$, respectively, independently of the causal character of $[[p],[q]]$. 
Thus, we can apply the manifold version of the gluing lemma, cf.\ Lemma \ref{mfgluinglemma}, to $T_2$ and $T_3$. In this way, we obtain that $T_5:= \Delta([x],[q],[z])$ satisfies the desired curvature bound. In particular, this is a valid timelike triangle in $X$.
At last, we apply Lemma \ref{gluinglemma} to the triangles $T_4 := \Delta([q],[y],[z])$ and $T_5$ and we finally obtain that the original triangle $T_1$ satisfies the curvature bound as well. Thus, the proof is completed.
\end{proof}
\begin{rem}[On regularity]
Clearly, one can formulate the above theorem for Lorentzian metrics of regularity $C^2$. As the techniques in \cite{Min15} work even in $C^{1,1}$, it is expected that this holds for the gluing theorem as well.
\end{rem}
\begin{chapt*}{Acknowledgments}
We want to thank Michael Kunzinger and Clemens Sämann for helpful discussions and comments. \\

This work was supported by research grant P33594 of the
Austrian Science Fund FWF.
\end{chapt*}

\bibliographystyle{alpha}
\bibliography{Biblio}
\end{document}